\newtheorem{theorem}{Theorem}[section]
\newtheorem{lemma}[theorem]{Lemma}
\newtheorem{proposition}[theorem]{Proposition}
\newtheorem{corollary}[theorem]{Corollary}
\newtheorem{conjecture}[theorem]{Conjecture}
\theoremstyle{definition}
\newtheorem{definition}[theorem]{Definition}
\newtheorem{remark}[theorem]{Remark}
\newtheorem{example}[theorem]{Example}
\newcommand{\ocultar}[1]{}
\newcommand{\Q}{\mathbf{Q}}
\newcommand{\cc}{\mathbf{c}}
\newcommand{\N}{\mathbb{N}}
\newcommand{\R}{\mathbb{R}}
\newcommand{\RP}{\mathbb{RP}}
\newcommand{\Z}{\mathbb{Z}}
\newcommand{\proj}{\mathbb{P}}
\newcommand{\bnn}{{\binom{[n]}2}}
\newcommand{\p}{\mathbf{p}}
\newcommand{\RR}{\mathcal{R}}
\newcommand{\CC}{\mathcal{C}}
\newcommand{\VV}{\mathcal{V}}
\newcommand{\HH}{\mathcal{H}}
\newcommand{\PP}{\mathcal{P}}
\newcommand{\q}{\mathbf{q}}
\newcommand{\bt}{\mathbf{t}}
\newcommand{\lin}{\operatorname{lin}}
\newcommand{\link}{\operatorname{lk}}
\newcommand{\Ass}[2]{\Delta_{#1}(#2)}
\newcommand{\OvAss}[2]{\overline{\Delta}_{#1}(#2)}
\DeclareMathOperator{\conv}{conv}
\DeclareMathOperator{\cone}{cone}
\DeclareMathOperator{\sign}{sign}
\title{Realizations of multiassociahedra via rigidity}
\author{Luis Crespo Ruiz \and Francisco Santos}
\thanks{{\bf Funding:} Supported by grants PID2019-106188GB-I00 and PID2022-137283NB-C21 and PRE2020-092702 funded by MCIN/AEI/10.13039/501100011033, and by project CLaPPo (21.SI03.64658) of Universidad de Cantabria and Banco Santander.}
\address{
Departamento de Matem\'aticas, Estad\'istica y Computaci\'on,
Universidad de Canta\-bria,
39005 Santander, Spain
}
\email{francisco.santos@unican.es, luis.cresporuiz@unican.es}
\begin{document}

\begin{abstract}
    Let $\Ass{k}{n}$ denote the simplicial complex of $(k+1)$-crossing-free subsets of edges in $\bnn$. Here $k,n\in \N$ and $n\ge 2k+1$.
    Jonsson (2003) proved that (neglecting the short edges that cannot be part of any $(k+1)$-crossing), $\Ass{k}{n}$ is a shellable sphere of dimension $k(n-2k-1)-1$, and conjectured it to be polytopal. The same result and question arose in the work of Knutson and Miller (2004) on subword complexes.
    
Despite considerable effort, the only values of $(k,n)$ for which the conjecture is known to hold are
$n\le 2k+3$ (Pilaud and Santos, 2012) and $(2,8)$ (Bokowski and Pilaud, 2009).
    Using ideas from rigidity theory and choosing points along the moment curve we realize $\Ass{k}{n}$ as a polytope for $(k,n)\in \{(2,9), (2,10) , (3,10)\}$. We also realize it as a simplicial fan for all $n\le 13$ and arbitrary $k$, except the pairs $(3,12)$ and $(3,13)$. 
    
    Finally, we also show that for $k\ge 3$ and $n\ge 2k+6$ no choice of points can realize $\Ass{k}{n}$  via bar-and-joint rigidity with points along the moment curve or, more generally, via cofactor rigidity with arbitrary points in convex position.
\end{abstract}

\maketitle

\tableofcontents

\section{Introduction and statement of results}
\label{sec:intro}

\subsection*{Introduction}

Triangulations of the convex $n$-gon $P$ ($n > 2$), regarded as sets of edges,  are the facets of an abstract simplicial complex with vertex set $\bnn$ and defined by taking as simplices all the non-crossing sets of diagonals. This simplicial complex, ignoring the boundary edges $\{i,i+1\}$, is a polytopal sphere of dimension $n-4$ dual to the \emph{associahedron}.
(Here and all throughout the paper,  indices for vertices of the $n$-gon are regarded modulo $n$).

A similar complex can be defined if, instead of forbidding pairwise crossings, we forbid crossings of more than a certain number  of edges. More precisely, we say that a subset of $\bnn$ is \emph{$(k+1)$-crossing-free},  (assuming $n \ge 2k+2$) if it does not contain $k+1$ edges that mutually cross, and  
define  $\Ass{k}{n}$ as the simplicial complex consisting of $(k+1)$-crossing-free sets of diagonals. Its facets are called \emph{$k$-triangulations} since in the case $k=1$ they are exactly the triangulations of the $n$-gon.

The $nk$ diagonals of length at most $k$ (where length is measured cyclically) belong to every $k$-triangulation since they cannot participate in any $(k+1)$-crossing. Hence, it makes sense to define the reduced complex $\OvAss{k}{n}$ obtained from $\Ass{k}{n}$ by forgetting them. We call  $\OvAss{k}{n}$ the  \emph{multiassociahedron} or {$k$-associahedron}. 
See Section~\ref{sec:multi} for more precise definitions, and \cite{PilPoc,PilSan,Stump} for additional information.

It was proved in \cite{Naka,DKM} that every $k$-triangulation of the $n$-gon has exactly $k(2n-2k-1)$ diagonals. That is, $\Ass{k}{n}$ is pure of dimension $k(2n-2k-1)-1$.
Jonsson \cite{Jonsson1} further proved that the reduced version $\OvAss{k}{n}$ is a vertex-decomposable (hence shellable) sphere of dimension $k(n-2k-1)-1$, and conjectured it to be polytopal. Remember that all polytopal spheres are shellable, so shellability can be considered evidence in favor of polytopality.  Vertex-decomposability is a stronger notion introduced by Provan and Billera~\cite{ProBil} implying, for example, that the diameters of  these spheres satisfy the Hirsch bound.

\begin{conjecture}[Jonsson]
\label{conj:polytope}
For every $n\ge 2k+1$ the complex $\OvAss{k}{n}$ is a polytopal sphere. That is, there is a simplicial polytope of dimension $k(n-2k-1)-1$ and with $\binom{n}2-kn$ vertices whose lattice of proper faces is isomorphic to $\OvAss{k}{n}$.
\end{conjecture}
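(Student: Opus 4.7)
The plan is to realize $\OvAss{k}{n}$ as a complete simplicial fan built from a rigidity matroid and then upgrade that fan to a polytope. The template is the one familiar from the classical associahedron ($k=1$): to each long diagonal $\{i,j\}\in\bnn$ attach a vector $v_{ij}$ in an ambient space of dimension $d:=k(n-2k-1)$; to each $k$-triangulation $T$ associate the simplicial cone spanned by the vectors of its long diagonals; and show that the union of these cones is a complete simplicial fan combinatorially isomorphic to $\OvAss{k}{n}$. Since Jonsson already proved $\OvAss{k}{n}$ to be a shellable sphere of dimension $d-1$, producing such a fan and then finding strictly convex heights on it would settle the conjecture.

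For the vectors $v_{ij}$ I would place $n$ points $p_1,\dots,p_n$ along the moment curve and use bar-and-joint rigidity in $\R^{2k}$ (or, more generally, cofactor rigidity with respect to points in convex position). The rigidity row of each edge defines a vector in a $2kn$-dimensional space, and after quotienting by the subspace spanned by the short edges and by the trivial infinitesimal motions one lands in exactly $\R^d$. The numerology matches perfectly: a $k$-triangulation has $k(2n-2k-1)$ edges, and after discarding the $kn$ short ones there remain $k(n-2k-1)=d$, the predicted rank of the quotient. The optimistic statement is that $k$-triangulations correspond to the bases of this rigidity matroid.

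The fan claim then splits into two local conditions. First, every $k$-triangulation must be independent in the rigidity matroid, so that its cone is simplicial; the moment curve is what should make this tractable, via a Vandermonde-style determinant computation on the rigidity rows. Second, at each flip $T \leftrightarrow T'$ that exchanges an edge $e\in T$ for its unique replacement $e'$, the corresponding vectors should satisfy a linear dependence
\[
\lambda_e v_e + \lambda_{e'} v_{e'} + \sum_{f\in T\cap T'} \mu_f v_f = 0
\]
with $\lambda_e \lambda_{e'} < 0$, which is exactly the local condition ensuring that adjacent simplicial cones glue consistently across their shared wall. Both properties are combinatorial-geometric statements about the moment curve and the known flip structure of $k$-triangulations.

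Finally, to pass from fan to polytope one seeks a height function $h$ on the long diagonals whose piecewise-linear extension is strictly convex across every wall; equivalently, one needs feasibility of a system of strict inequalities indexed by the flips. I would attempt explicit heights given by a low-degree polynomial in $i,j$, exploiting the moment-curve symmetries, and otherwise reduce to a dimension count. The main obstacle, which is almost certainly why the full conjecture has resisted for twenty years, lies in the fan step rather than the lifting: there is no \emph{a priori} reason that the cones indexed by different $k$-triangulations tile $\R^d$ without overlap, and the paper itself warns that for $k\ge 3$ and $n\ge 2k+6$ this rigidity construction must fail. So I expect this plan to succeed only in the small regime $n-2k$ small, and that settling the conjecture for large $n-2k$ and $k\ge 3$ will require either a cleverer choice of matroid or a genuinely new idea beyond rigidity with convex-position point configurations.
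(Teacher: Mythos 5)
This statement is labeled a \emph{conjecture} precisely because the paper does not prove it; the paper only establishes a handful of new small cases (Theorem~\ref{thm:main}) and, importantly, shows the approach you sketch \emph{cannot} be pushed through in general (Theorem~\ref{thm:2k+6}). There is therefore no ``paper's own proof'' to compare against. Your proposal is not a proof but an accurate description of the paper's methodology, and to your credit you say so explicitly in the final paragraph.

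That said, the description is faithful in its essential points: the use of cofactor/bar-and-joint rigidity along the moment curve (which the paper calls polynomial rigidity $\PP_{2k}(\bt)$), the dimension count $k(2n-2k-1)-kn=k(n-2k-1)$, the two-step fan criterion (independence of each $k$-triangulation plus the sign condition at flips, which is the ICoP property of Corollary~\ref{coro:fan}), and the final linear-feasibility test for polytopality (Lemma~\ref{lemma:inequalities}). One refinement worth noting: ICoP alone is not quite sufficient; the paper's Theorem~\ref{thm:winding} and Proposition~\ref{prop:shortcycles} show one also needs the elementary $5$-cycles to be simple, which is an extra condition not captured by your ``wall-crossing'' sign condition alone. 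Your concluding pessimism is precisely right: the paper proves that for $k\ge 3$ and $n\ge 2k+6$ no convex-position choice of points works for cofactor rigidity (and no moment-curve choice for bar-and-joint or hyperconnectivity), so the conjecture in that range genuinely requires a different matroid, non-convex/non-moment-curve positions for bar-and-joint, or a new idea altogether. Thus you have correctly reconstructed the state of the art, not supplied a proof.
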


The first appearance of this statement, as a question rather than a conjecture, is the 2003 preprint~\cite{Jonsson1}. The conjecture then appeared explicitly in Jonsson's hand-written abstract  after his talk in an Oberwolfach Workshop the same year~\cite{mfo-2003,Jonsson-handwritten} (but it did not appear in the shorter abstract published in the Oberwolfach Reports). It was also included in the unpublished manuscript by Dress, Gr\"unewald, Jonsson, and Moulton~\cite{Jonsson-etal}, before appearing in papers by other authors \cite{PilSan,Stump}.

\begin{remark}
The question of polytopality of $\OvAss{k}{n}$ is quite natural, since it generalizes the associahedron (the case $k=1$) which admits many different constructions as a polytope~\cite{CSZ,PSZ}. One would expect that, as happens in the case of the associahedron, having explicit polytopal constructions of $\OvAss{k}{n}$ would uncover interesting combinatorics. If, in the contrary, it turns out that $\OvAss{k}{n}$ is not always polytopal, it would also be interesting to know it; it would probably be the first family of shellable spheres naturally arising from a combinatorial problem and that are proven not to be polytopal.

Interest in this question comes also from cluster algebras and Coxeter combinatorics. 
 Let $w\in W$ be an element in a Coxeter group $W$ and let $Q$ be a word of a certain length $N$. Assume that $Q$ contains as a subword a reduced expression for $w$. The \emph{subword complex} of $Q$ and $w$ is the simplicial complex with vertex set $[N]$ and with faces the subsets of positions that can be deleted from $Q$ and still contain a reduced expression for $w$. Knutson and Miller~\cite[Theorem 3.7 and Question 6.4]{KnuMil} proved that every subword complex is either a vertex-decomposable (hence shellable) ball or sphere, and they asked whether all spherical subword complexes are polytopal. 
 It was later proved by Stump~\cite[Theorem 2.1]{Stump} that  $\OvAss{k}{n}$ is a spherical subword complex for the Coxeter system of type $A_{n-2k-1}$ and, moreover, it is \emph{universal}: every other spherical subword complex of type $A$ appears as a link in some $\OvAss{k}{n}$~\cite[Proposition 5.6]{PilSan:brick}. 
  In particular, Conjecture~\ref{conj:polytope} is equivalent to a positive answer (in type A) to the question of Knutson and Miller. 
 
 Versions of $k$-associahedra for the rest of finite Coxeter groups exist, with the same implications~\cite{CLS14}.
\end{remark}

Conjecture~\ref{conj:polytope} is easy to prove  for $n\le 2k+3$. $\OvAss{k}{2k+1}$ is indeed a $-1$-sphere (the complex whose only face is the empty set). $\OvAss{k}{2k+2}$ is the face poset of a $(k-1)$-simplex, and $\OvAss{k}{2k+3}$ is (the polar of) the cyclic polytope of dimension $2k-1$ with $n$ vertices (Lemma 8.7 in \cite{PilSan}). 
The only additional case for which Jonsson's conjecture is known to hold is $k=2$ and $n=8$ \cite{bokpil}. In some additional cases $\OvAss{k}{n}$ has been realized as a complete simplicial fan, but it is open whether this fan is polytopal. This includes the cases $n\le 2k+4$ \cite{bcl}, the cases $k=2$ and $n\le 13$ \cite{Manneville} and the cases $k=3$ and $n\le 11$ \cite{bcl}.

In this paper we explore Conjecture~\ref{conj:polytope} both in its polytopality version and in the weaker version where we want to realize $\OvAss{k}{n}$ as a complete fan. Our method is to use as rays for the fan the row vectors of a rigidity matrix of $n$ points in dimension $2k$, which has exactly the required rank $k(2n-2k-1)$ for $\Ass{k}{n}$. 
There are several versions of rigidity that can be used, most notably bar-and-joint, hyperconnectivity, and cofactor rigidity. Among these, cofactor rigidity seems the most natural one because it deals with points in the plane; the ``dimension'' $2k$ of this rigidity theory relates to the degree of the polynomials used.

Our results are of two types. On the one hand we show new cases of multiassociahedra $\OvAss{k}{n}$ that can be realized, be it as fans or as polytopes, with cofactor rigidity taking points along the parabola (which is known to be equivalent to bar-and-joint rigidity with points along the moment curve). On the other hand we show that certain multiassociahedra, namely those with $k\ge 3$ and $n\ge 2k+6$ cannot be realized as fans with cofactor rigidity, no matter how we choose the points.

\subsection*{Summary of methods and results}

Using a (human guided) computer search, we find explicit embeddings  of $\OvAss{k}{n}$ for additional parameters, be it as a polytope or only as a complete fan. 
We list only the ones that were not previously known:

\begin{theorem}
\label{thm:main}
\begin{enumerate}
\item For $(k,n) \in \{(2,9), (2,10), (3,10)\}$, $\OvAss{k}{n}$ is a polytopal sphere. 
\item $\OvAss{4}{13}$ can be realized as a complete simplicial fan.
\end{enumerate}
\end{theorem}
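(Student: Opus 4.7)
The plan is to realize $\OvAss{k}{n}$, for each of the four pairs $(k,n)$ in the theorem, as the simplicial fan spanned by the rows of a cofactor rigidity matrix of $n$ carefully chosen points on the parabola in the plane; this is known to be equivalent to bar-and-joint rigidity for the corresponding points on the moment curve in $\R^{2k}$. Each edge $e \in \bnn$ yields a row vector $v_e$, and generically these vectors span a space of dimension $k(2n-2k-1)$, matching the rank of the rigidity matroid and the dimension of $\Ass{k}{n}$. The $nk$ short edges belong to every $k$-triangulation, so the corresponding $v_e$ are linearly independent and may be quotiented out, leaving rays in a space of dimension $k(n-2k-1)$, the correct ambient dimension for $\OvAss{k}{n}$.

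First I would enumerate every $k$-triangulation $T$ and check (i) that $\{v_e : e \in T\}$ is a basis, so that $\cone(T)$ is a full-dimensional simplicial cone, and (ii) that every flip $T \leftrightarrow T'$ (removing an edge $e$, inserting an edge $e'$) corresponds to a circuit among the $v_\bullet$ in which $v_e$ and $v_{e'}$ appear with opposite signs, so that the two cones share a common facet and lie on opposite sides of it. Because $\OvAss{k}{n}$ is already known to be a sphere by Jonsson, this local flip-positivity is sufficient to conclude that the collection $\{\cone(T)\}$ forms a complete simplicial fan.

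For the three cases in part~(1), to upgrade the fan to a polytope I would look for a height vector $h$ with one coordinate per long edge satisfying one strict convexity inequality per flip. A natural trial family, suggested by the moment-curve construction, is to take $h_e$ to be a low-degree polynomial in the parameters $t_i, t_j$ associated with the edge $e=\{i,j\}$, and then tune the polynomial so that every flip inequality is satisfied. This is a linear feasibility problem in the coefficients of the polynomial and can be attacked either by direct LP or by symbolic computation.

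The hardest step is the choice of the $n$ parameter values on the parabola: the negative results elsewhere in the paper for $k \ge 3$ and $n \ge 2k+6$ show that the region of good parameters shrinks rapidly, and the pairs $(2,9)$, $(2,10)$, $(3,10)$, $(4,13)$ appear to lie at the boundary of what the method can reach. Identifying suitable parameters must be done by a human-guided computer search. A secondary obstacle is the certificate problem: rather than rely on floating-point evidence, one must exhibit explicit rational values $t_1,\dots,t_n$ (and, for polytopality, rational heights $h_e$) and verify symbolically that the associated bases, flip dependencies, and convexity inequalities all hold strictly; even in the dimension-$19$ case $(4,13)$ of part~(2), this is a nontrivial computer algebra exercise.
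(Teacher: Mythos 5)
Your overall strategy is the same as the paper's: choose parameters on the parabola, form the cofactor rigidity rows, verify each $k$-triangulation is a basis, check a sign condition at each flip, and solve an LP for polytopality in part~(1). However, there is a real gap in your claim that ``because $\OvAss{k}{n}$ is already known to be a sphere by Jonsson, this local flip-positivity is sufficient to conclude that the collection $\{\cone(T)\}$ forms a complete simplicial fan.'' It is not. Bases plus flip-positivity (the ICoP property) guarantee only that the induced map $\phi\colon |\OvAss{k}{n}| \to S^{D-1}$ is an orientation-consistent local homeomorphism; its degree could a priori be $\pm 2$ or more, in which case the cones overlap rather than forming a fan. Concretely, the failure mode is an elementary cycle (the link of a codimension-two face) that winds more than once around the origin after contraction to $\R^2$; ICoP controls each consecutive pair of cones in the cycle but not the total winding. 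The paper addresses this in Theorem~\ref{thm:winding} and Proposition~\ref{prop:shortcycles}: one must additionally verify that every elementary cycle has winding number $\pm 1$. Since Corollary~\ref{coro:cycles} shows these cycles have length at most $5$, and length-$\le 4$ cycles are automatically simple, the extra check reduces to the sign condition in part~(3) of Corollary~\ref{coro:fan} on length-$5$ cycles. Your verification code would need to include this third test; without it the argument that the cones cover the sphere exactly once is incomplete.

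A smaller point: restricting the height vector $h_e$ to be a low-degree polynomial in $t_i,t_j$ is a heuristic that may render an actually-feasible LP infeasible; the paper instead solves the raw LP from Lemma~\ref{lemma:inequalities} after normalizing the heights of one $k$-triangulation to zero, which is both complete and computationally cheaper than it first appears. Also, part~(2) only asserts a fan, so no LP is needed for $(4,13)$, and the relevant fan dimension there is $k(n-2k-1)=16$, not $19$.
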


Adding this to previous results, we have that $\OvAss{k}{n}$ can be realized as a fan (which for us always means a complete fan) if $n\le\max\{2k+4,13\}$ except for $(n,k)=(3,12)$ and $(3,13)$,  and as a polytope if $n\le\max\{2k+3,10\}$.

\medskip

Our method to realize $\OvAss{k}{n}$ is via rigidity theory. We now explain the connection.
The number $k(2n-2k-1) = 2kn - \binom{2k+1}{2}$ of edges in a $k$-triangulation of the $n$-gon
happens to coincide with the rank of \emph{abstract rigidity matroids} of dimension $2k$ on $n$ elements,  which capture and generalize the combinatorial rigidity of graphs with $n$ vertices  embedded in $\R^{2k}$. This numerical coincidence (plus some evidence) led \cite{PilSan} to conjecture that \emph{all $k$-triangulations of the $n$-gon are bases in the generic bar-and-joint rigidity matroid of $n$ points in dimension $2k$.}

Apart of its theoretical interest, knowing $k$-triangulations to be bases can be considered a step  towards proving polytopality of $\OvAss{k}{n}$, as follows.
For any given choice of points $p_1,\dots,p_n \in \R^{2k}$ in general position, the rows of their rigidity matrix (see Section~\ref{sec:rigidity}) give a real vector configuration $\VV =\{p_{ij}\}_{i,j}$ of rank ${k(2n-2k-1)}$. The question then is whether using those vectors as generators makes $\OvAss{k}{n}$ be a fan, and whether this fan is polytopal. Being bases is then a partial result: it says that at least the individual cones have the right dimension and are simplicial.

All the realizations of $\OvAss{k}{n}$ that we construct use this strategy for positions  of the points along the \emph{moment curve} $\{(t,t^2,\dots,t^{2k})\in\R^{2k}:t\in \R\}$. The reason to restrict our search to the moment curve is that in our previous paper~\cite{CreSan:moment} we show that, for points along the moment curve, the vector configuration obtained with bar-and-joint rigidity coincides (modulo linear isomorphism) with configurations coming from two other interesting forms of rigidity: Kalai's \emph{hyperconnectivity}~\cite{Kalai} along the moment curve and Billera-Whiteley's \emph{cofactor rigidity}~\cite{Whiteley} along the parabola. This is useful in our proofs and it also makes our realizations more ``natural'', since they can be interpreted in the three versions of rigidity.

In fact, we pose the conjecture that positions along the moment curve realizing $\OvAss{k}{n}$ as a basis collection exist for every $k$ and $n$:

\begin{conjecture}
\label{conj:rigid}
$k$-triangulations of the $n$-gon are isostatic (that is, bases) in the bar-and-joint rigidity matroid of generic points along the moment curve in dimension $2k$.
\end{conjecture}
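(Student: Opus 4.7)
The plan is to work in cofactor rigidity along the parabola rather than bar-and-joint rigidity on the moment curve directly. By the equivalence of \cite{CreSan:moment}, the two configurations are linearly isomorphic, and the cofactor description yields explicit polynomial formulas for each row of the rigidity matrix $M$ in the scalar parameters $t_i, t_j$ of the points on the parabola. Independence of a set $F\subseteq \bnn$ of $k(2n-2k-1)$ edges then amounts to non-vanishing of a single polynomial $\det(M_F)\in \R[t_1,\dots,t_n]$, and since the $t_i$ are generic it is enough to show that $\det(M_F)$ is not identically zero as a function of the parameters.

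The base case $n=2k+1$ is handled directly: the only $k$-triangulation is the complete graph on $2k+1$ vertices, and the corresponding $\binom{2k+1}{2}\times \binom{2k+1}{2}$ cofactor minor is a classical symmetric-function expression generalizing the Vandermonde determinant that appears for $k=1$ and can be shown to be nonzero by inspection.

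For the inductive step I would use the flip-connectedness of the space of $k$-triangulations (\cite{PilSan,Stump}): if some $k$-triangulation $T_0$ of the $n$-gon is isostatic and every flip between $k$-triangulations preserves isostaticity, then all $k$-triangulations are bases. A natural candidate for $T_0$ is the zigzag $k$-triangulation, which can be analyzed by peeling off one outermost star and applying induction on $n$, the attachment step reducing to a Vandermonde-type calculation on $2k+1$ consecutive parameters. The flip-preservation step is equivalent to showing that, when $T$ and $T'$ differ only in the replacement of an edge $e$ by an edge $e'$, the minor of $M$ encoded by the pair of stars of $T$ meeting $e$ is nonzero, so that $e'$ is not in the linear span of $T\setminus\{e\}$.

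The main obstacle is this flip-preservation step. Unlike in the case $k=1$, a $k$-flip is not arithmetically local: the edges $e$ and $e'$ are the two diagonals of the unique $(k+1)$-crossing inside a common $(2k+2)$-subset of vertices, and the relevant minor entangles all those vertices together with the algebraic structure of the two stars sharing $e$. A uniform argument will probably need either a secondary induction on $k$, stripping one vertex from each of the two stars at a time and relating the result to $(k-1)$-rigidity, or a Lindstr\"om--Gessel--Viennot-style combinatorial interpretation of the determinant expansion, organizing its monomials as cancellation-free sums indexed by families of non-crossing paths on the $n$-gon. Finding the right combinatorial model for this expansion is, in my view, the conceptual core of the conjecture and the reason why case-by-case verifications succeed for each small $(k,n)$ (as the experimental part of the paper confirms up to $n=13$) while a uniform proof has remained elusive.
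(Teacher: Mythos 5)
This statement is labeled a \emph{conjecture} in the paper, and the paper does not prove it: it remains open for $k\ge 3$. The paper establishes only the case $k=2$ (Theorem~\ref{thm:rigid}), so there is no ``paper's own proof'' of the general conjecture to compare against, only the $k=2$ argument. With that caveat, two remarks.

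First, your proposed route (flip-connectedness plus a ``flip preserves isostaticity'' lemma) is genuinely different from the route the paper takes for $k=2$. The paper inducts on $n$, never on the flip graph: given a $2$-triangulation $T'$ on $n+1$ vertices, it locates an ear (Theorem~\ref{thm:ears}), flattens the doubly external $2$-star it bounds (Theorem~\ref{thm:flattening}) to obtain a $2$-triangulation $T$ on $n$ vertices, and observes that the inverse inflation $T\leadsto T'$ is a vertex $4$-split (Lemma~\ref{lemma:inflation}), which preserves independence (Proposition~\ref{prop:split}). The strength of this approach is that the local operation --- a vertex split --- already has a known independence-preservation theorem in every relevant rigidity matroid, so nothing about circuits or determinants needs to be re-derived. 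Your flip-preservation lemma enjoys no such pre-existing theorem, and indeed cannot, because it is essentially a restatement of the conjecture: once one $k$-triangulation is a basis and the complex is flip-connected, ``every flip preserves basis-ness'' is equivalent to ``every $k$-triangulation is a basis.'' You have not shown that the flip has a local algebraic structure that makes it tractable, and Section~\ref{sec:obstructions} of the paper shows why such locality is delicate: for $k\ge 3$ there exist $k$-triangulations (circuits, even) where a \emph{global} degeneration --- three lines becoming concurrent --- kills independence, so whatever local flip lemma you hope for must somehow be sensitive to these Desargues-type configurations.

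Second, even the positive part of your sketch is underdetermined. You say the zigzag $k$-triangulation should be isostatic by ``peeling off one outermost star,'' but for general $k$ peeling a star is precisely the flattening operation, and whether flattening is a vertex $2k$-split (the step that makes the $k=2$ proof work) is not automatic; the paper only proves it for doubly external $2$-stars in the $k=2$ case. So the ``attachment step reducing to a Vandermonde-type calculation'' needs an argument. You are right that the base case $n=2k+1$ is trivial ($K_{2k+1}$ has exactly $\binom{2k+1}{2}$ edges and the rigidity matrix of $2k+1$ points in general position in $\R^{2k}$ has full rank $\binom{2k+1}{2}$), and you are right to identify the flip step as the conceptual core; but as presented this is an outline of where a proof might go, not a proof, and the part you flag as ``the main obstacle'' is indeed the open problem.
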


This conjecture implies the one from \cite{PilSan} mentioned above, but it would imply the same for the generic cofactor rigidity matroid and for the generic hyperconnectivity matroid. (The latter is  known to hold by a previous result of ours \cite[Corollary 2.17]{CreSan:Pfaffians}).
As evidence for the conjecture we prove the case $k=2$:

\begin{theorem} \label{thm:rigid}
$2$-triangulations are isostatic in dimension $4$ for generic positions along the moment curve.
\end{theorem}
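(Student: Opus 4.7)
The plan is to combine the equivalence of rigidity theories established in \cite{CreSan:moment} with a Henneberg-type induction using $k$-ears.

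First, by \cite{CreSan:moment}, the bar-and-joint rigidity matroid of $n$ generic points along the moment curve in $\R^4$ is linearly isomorphic, as a vector configuration, to the cofactor rigidity matroid of the corresponding points on the parabola, so it is enough to prove the statement for parabolic cofactor rigidity. This is convenient because the cofactor matrix has a compact polynomial form in the parameters $t_i$, where non-vanishing of minors reduces to Vandermonde-type computations. This also provides a useful parallel with \cite[Corollary 2.17]{CreSan:Pfaffians}, where the analogous claim is proved for the generic hyperconnectivity matroid; however, since the moment curve has only $n$ free parameters rather than the $2n$ or $4n$ of a truly generic configuration, that result does not transfer automatically and a dedicated argument is needed.

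Next, I would proceed by induction on $n$. The base case is $n=2k+1=5$: the unique 2-triangulation is $K_5$ with $\binom{5}{2}=10=4n-\binom{5}{2}$ edges, and any five points on the moment curve in $\R^4$ are affinely independent (a $5\times 5$ Vandermonde), so $K_5$ is trivially isostatic. For the inductive step, I would invoke the classical fact that every 2-triangulation of the $n$-gon has a \emph{$2$-ear}, i.e., a vertex $v$ whose only incident edges are the four forced short diagonals $\{v,v\pm 1\}$ and $\{v,v\pm 2\}$, and that removing $v$ and these four edges yields a 2-triangulation of the $(n-1)$-gon (Nakamigawa~\cite{Naka}; see also~\cite{Jonsson1}). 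By the inductive hypothesis, the smaller 2-triangulation is isostatic for the remaining $n-1$ moment-curve points in $\R^4$. Reintroducing $v$ with its four short diagonals is a Henneberg $0$-extension in dimension $4$, which preserves isostaticity as long as $v$ and its four neighbours $p_{v\pm 1}, p_{v\pm 2}$ are affinely independent in $\R^4$, and this is automatic on the moment curve by the same Vandermonde argument.

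The main obstacle is the careful verification of the Henneberg step: in the cofactor (equivalently bar-and-joint) setting, one must check that the four rigidity-matrix rows associated to the short diagonals at $v$ are linearly independent from each other and span a direction transverse to the $(4n-14)$-dimensional span of the rows coming from the reduced 2-triangulation. While affine independence of five moment-curve points makes this plausible, the explicit verification reduces to computing a specific $4\times 4$ minor of the parabolic cofactor matrix, which I expect to factor as a product of Vandermonde-type terms in the $t_i$'s. A secondary subtlety is ensuring that the 2-ear removal actually yields a $2$-triangulation of the $(n-1)$-gon: certain length-$3$ diagonals in the $n$-gon become length-$2$ in the $(n-1)$-gon and must therefore already belong to the original 2-triangulation whenever the chosen vertex is a 2-ear, which is part of the content of Nakamigawa's theorem and will need to be invoked in precisely this form.
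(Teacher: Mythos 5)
Your overall strategy matches the paper's in several respects---reducing to the moment-curve/cofactor setting via \cite{CreSan:moment}, using $K_5$ as the base case, and inducting by adding one vertex at a time---but the combinatorial ingredient of your inductive step is false. It is \emph{not} true that every $2$-triangulation of the $n$-gon contains a vertex of degree exactly $4$ (a ``$2$-ear'' in your sense). Consider the $2$-triangulation of the $8$-gon whose relevant edges are $\{1,4\}$, $\{2,6\}$, $\{2,7\}$, $\{3,6\}$, $\{4,8\}$, $\{5,8\}$: one checks directly that no three of these mutually cross, so together with the $16$ edges of length $1$ or $2$ this gives $22 = 4\cdot 8-10$ edges with no $3$-crossing, hence a $2$-triangulation by the Capoyleas--Pach bound. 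But every vertex is an endpoint of at least one relevant edge, so the minimum degree is $5$. (Its four $2$-stars are $\{1,2,3,4,6\}$, $\{3,4,5,6,8\}$, $\{5,6,7,8,2\}$, $\{7,8,1,2,4\}$; all are doubly but not triply external, and a degree-$4$ vertex exists precisely when some star is triply external.) So the fact you attribute to Nakamigawa---at best a statement that removing a degree-$4$ vertex, \emph{when one exists}, gives a $2$-triangulation---does not supply existence, and a Henneberg $0$-extension, which by definition introduces a vertex of degree exactly~$d$, simply cannot reach all $2$-triangulations.

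The paper's inductive step avoids this by using a weaker but always-available structure. Instead of a degree-$4$ vertex it uses an \emph{ear} (an edge of length $3$), of which every $2$-triangulation has at least four (Theorem~\ref{thm:ears}); an ear bounds a doubly external $2$-star, whose \emph{flattening} (Theorem~\ref{thm:flattening}) yields a $2$-triangulation on one fewer vertex, and Lemma~\ref{lemma:inflation} identifies the inverse operation as a \emph{vertex $4$-split}. Proposition~\ref{prop:split} then gives that vertex $d$-splits preserve independence on the moment curve. A Henneberg $0$-extension is exactly the degenerate case of a vertex $d$-split in which one of the two new vertices keeps only the splitting edge plus the $d-1$ shared neighbours, so the paper's argument is the strictly more general version of the one you propose---and the counterexample above shows the extra generality is necessary. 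If you replace ``remove a degree-$4$ vertex'' by ``reverse a vertex $4$-split at a doubly external star,'' your outline (including the linear-algebra check on the rigidity matrix, which does reduce to a Vandermonde-type nonvanishing on the moment curve, now packaged in Proposition~\ref{prop:split}) goes through.
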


In fact, our experiments make us believe that in this statement the  word ``generic'' can be changed to ``arbitrary''. 

\begin{conjecture}
\label{conj:rigid-k=2}
$2$-triangulations of the $n$-gon are isostatic (that is, bases) in the bar-and-joint rigidity matroid of arbitrary (distinct) points along the moment curve in dimension $4$.
\end{conjecture}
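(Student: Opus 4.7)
The plan is to upgrade Theorem~\ref{thm:rigid}, which covers only generic parameters, to all choices of distinct $t_1,\dots,t_n$. Fix a $2$-triangulation $T$ and set $p_i=(t_i,t_i^2,t_i^3,t_i^4)\in\R^4$. The maximal minor of the rigidity matrix $R_T$ indexed by $T$, after deleting the $10$ columns that kill the trivial motions, is a polynomial $P_T(t_1,\dots,t_n)$, not identically zero by Theorem~\ref{thm:rigid}. Conjecture~\ref{conj:rigid-k=2} is equivalent to saying that the zero locus of $P_T$ sits inside the braid arrangement $\bigcup_{i<j}\{t_i=t_j\}$. Since $p_i-p_j=(t_i-t_j)\bigl(1,\,t_i+t_j,\,t_i^2+t_it_j+t_j^2,\,(t_i+t_j)(t_i^2+t_j^2)\bigr)$, each edge row of $R_T$ is divisible by $(t_i-t_j)$, so $P_T=\prod_{\{i,j\}\in T}(t_i-t_j)\cdot Q_T$ for some polynomial $Q_T$, and it suffices to show that $Q_T$ is nonvanishing whenever the $t_i$ are distinct.

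The central step would be to express $P_T$ as a signed monomial in the \emph{brackets} $[i_1i_2i_3i_4]$ (the $4\times 4$ Vandermonde minors of $(t_1,\dots,t_n)$), or at worst as a sum of such bracket monomials with a common sign per chamber of $\R^n\setminus\bigcup\{t_i=t_j\}$. On the moment curve each bracket factors as $\prod_{a<b}(t_{i_b}-t_{i_a})$, which is nonzero on distinct parameters and of constant sign per chamber; this would immediately imply the conjecture. The $k=1$ analogue is the classical identity writing the planar rigidity determinant of a triangulation as $\pm\prod_{\text{triangles }abc}[abc]$, and a natural ansatz for $k=2$ is to index the brackets by the $n-4$ stars (complete $K_5$ subgraphs of $T$) guaranteed by Jonsson's theory of $k$-triangulations, with each star $S$ contributing a signed combination of the five brackets $[S\setminus\{v\}]$. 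Alternatively, the equivalence proved in \cite{CreSan:moment} transfers the question to cofactor rigidity on the parabola or hyperconnectivity on the moment curve, where the matrix is sparser and its entries are explicit monomials in $(t_i-t_j)$, which should make a bracket expansion more transparent to spot.

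The main obstacle is ruling out hidden linear factors $(t_i-t_j)$ with $\{i,j\}\notin T$ inside $Q_T$. The algebraic attack would fix a lexicographic order on $t_1,\dots,t_n$, identify the leading monomial of $P_T$ row by row, and verify it matches $\prod_{\{i,j\}\in T}(t_i-t_j)$ exactly (forcing $Q_T$ to reduce to a monomial of the desired form); this direction connects naturally with Knutson-Miller's initial-ideal description of subword complexes. The inductive attack would instead remove a low-degree vertex of $T$ and Laplace-expand along its incident rows, but $2$-triangulations lack the clean ``$2$-ear'' that makes the associahedron case immediate, so any recursion must track the structural change of $T$ globally. A fallback, supported by the authors' experimental evidence, is a chamber-by-chamber sign analysis: once a bracket expansion is in hand, nonvanishing of $P_T$ reduces to verifying a dominant sign at a single representative per chamber of the braid arrangement, which by the cyclic and reflective symmetries of $\OvAss{2}{n}$ is a finite combinatorial check.
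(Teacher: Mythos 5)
The statement you have been asked to prove is not a theorem in the paper; it is Conjecture~\ref{conj:rigid-k=2}, and the paper explicitly leaves it open. The authors prove only the generic version (Theorem~\ref{thm:rigid}, via vertex splits) and the conditional consequence (Theorem~\ref{thm:fan-k=2}). So there is no ``paper's own proof'' to compare your attempt against, and a complete argument here would be a substantial new contribution rather than a rediscovery.

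Beyond that, what you have written is a research programme rather than a proof, and you acknowledge as much (``the central step \emph{would be}'', ``the main obstacle is\ldots''). The load-bearing claim, that $P_T$ (or $Q_T$) admits a bracket-monomial expansion with a sign constant on each chamber of the braid arrangement, is asserted, not derived, and there is no a~priori reason it should hold. For bar-and-joint rigidity the \emph{pure condition} of an isostatic graph need not factor into Grassmann--Pl\"ucker brackets (the standard counterexample is $K_{3,3}$ in the plane, whose pure condition is an irreducible conic-type condition), so even if the moment-curve specialization is more tractable, this would require a genuinely new argument, not an analogy with the triangulated-polygon case. Two smaller issues in the sketch: first, the obstacle you single out --- ``hidden'' factors $(t_i-t_j)$ with $\{i,j\}\notin T$ inside $Q_T$ --- is actually harmless for the conjecture, since such factors vanish only on the braid arrangement, which is excluded by hypothesis; the real danger is an irreducible factor of $Q_T$ that is \emph{not} a difference of two parameters, for then $P_T$ would vanish at distinct positions and the conjecture would be false. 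Second, the $2$-stars of a $2$-triangulation are $5$-cycles (star polygons of type $\{5/2\}$, per Definition~\ref{defi:stars}), not complete $K_5$ subgraphs, so the proposed indexing of brackets by $K_5$'s in $T$ is not available as stated. As a whole the direction is sensible and the reduction to non-vanishing of $Q_T$ off the braid arrangement is correctly set up, but nothing you write closes the gap, and the paper itself regards that gap as open.
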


This conjecture has an apparently much stronger implication:

\begin{theorem}\label{thm:fan-k=2}
	If Conjecture \ref{conj:rigid-k=2} is true, then all positions along the moment curve realize $\OvAss{2}{n}$ as a fan (hence, Conjecture \ref{conj:polytope} would almost be true for $k=2$).
\end{theorem}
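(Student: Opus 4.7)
The plan is a continuity argument on the convex (hence path-connected) parameter space
\[
\mathcal{P}_n=\{(t_1,\dots,t_n)\in\R^n : t_1<\dots<t_n\}
\]
of ordered $n$-tuples on the moment curve. Since $\OvAss{2}{n}$ is a simplicial sphere (Jonsson), the standard pseudomanifold criterion reduces the fan property to two conditions: each facet must produce a full-dimensional simplicial cone (which Conjecture~\ref{conj:rigid-k=2} provides for every $P\in\mathcal{P}_n$), and at every interior codimension-one face---that is, every flip $T\leftrightarrow T'$ with $T\setminus T'=\{e\}$, $T'\setminus T=\{e'\}$ and common face $F=T\cap T'$---the vectors $v_e(P)$ and $v_{e'}(P)$ must lie in opposite open half-spaces of $\lin\{v_a(P):a\in F\}$.

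Fix such a flip and normalize the unique linear dependence among the rigidity vectors $\{v_a(P)\}_{a\in F\cup\{e,e'\}}$ so that the coefficient of $v_e(P)$ equals $+1$; denote by $\mu_F(P)$ the resulting coefficient of $v_{e'}(P)$. A short computation (writing $v_e$ in terms of the basis of $\lin(F)$ plus the normal component) shows that the opposite-sides condition at $F$ is exactly $\mu_F(P)>0$. By Cramer's rule $\mu_F(P)$ is a ratio of two polynomial determinants in $(t_1,\dots,t_n)$, each proportional to a basis determinant associated with $T$ or $T'$; the hypothesis forces both determinants to be nonzero everywhere on $\mathcal{P}_n$. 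Hence $\mu_F$ is continuous and nowhere-zero on the connected set $\mathcal{P}_n$, and so of constant sign: the opposite-sides condition is either satisfied at every $P\in\mathcal{P}_n$ or at none of them.

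It remains to verify that the constant sign is positive for every flip. My plan is to induct on~$n$: for small $n$ ($n\le 10$, say), $\OvAss{2}{n}$ is known to admit a polytopal (hence fan) realization by earlier work and by Theorem~\ref{thm:main}(1), and once the fan is realized at one point configuration in $\mathcal{P}_n$, the connectedness argument above extends it to all of $\mathcal{P}_n$. For the inductive step, one compares the flips of $\OvAss{2}{n}$ with those of $\OvAss{2}{n-1}$ by letting $t_n\to+\infty$ and extracting the leading Vandermonde contributions of the determinants defining $\mu_F$: when the flip does not involve the new vertex, one recovers an $(n-1)$-flip handled by induction; when it does, a direct determinantal computation, organized via the Pilaud--Santos combinatorial description of flips as exchanges of common bisectors of $2$-stars, should pin down the leading sign. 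The main obstacle is this last case: one needs a uniform enumeration of the ``new'' flips introduced by vertex $n$ and a case-free argument that the leading Vandermonde term is always positive. The parenthetical ``almost'' in the statement merely acknowledges that even after all this, we would only have a complete simplicial fan, not yet the polytopality sought by Conjecture~\ref{conj:polytope}.
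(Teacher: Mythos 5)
Your continuity idea is the right one and matches the spirit of the paper's proof, but there is a genuine gap in the reduction you set up at the start. You claim that, because $\OvAss{2}{n}$ is a simplicial sphere, ``the standard pseudomanifold criterion'' reduces realization as a fan to (i) every facet giving a full-dimensional cone and (ii) the opposite-sides condition at every ridge (flip). That is false, and the paper devotes a good part of Section~\ref{sec:polytopal} to precisely this point: condition (ii) is the ICoP property, and Theorem~\ref{thm:winding} shows that ICoP is \emph{not} sufficient for $\phi_{\Delta,\VV}$ to be a homeomorphism --- one also needs every elementary cycle (link of a codimension-two face) to have winding number $\pm1$, equivalently that the degree of the map be $\pm1$. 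Without that, $\phi_{\Delta,\VV}$ could be, e.g., a double cover. The paper reduces this extra condition to checking cycles of length exactly five (Corollary~\ref{coro:cycles} and Proposition~\ref{prop:shortcycles}), and the actual proof of Theorem~\ref{thm:fan-k=2} explicitly splits ``the signs need to change'' into two cases: ``either by attaining condition ICoP at a flip, \emph{or by making the cycle simple}.'' Your argument handles only the first case, so as written it does not prove the statement.

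The good news is that your continuity/connectedness mechanism extends essentially verbatim to the missing case: the winding number of an elementary cycle is an integer-valued quantity that can change only when some cone degenerates to lower dimension, which Conjecture~\ref{conj:rigid-k=2} rules out. So the repair is to run your argument for both types of obstructions. A second, lesser issue is that your plan for pinning down the constant sign (induction on $n$ by sending $t_n\to+\infty$ and extracting Vandermonde leading terms) is a sketch that you acknowledge is incomplete; the paper instead invokes Theorem~\ref{thm:good_cycle}, already established via the flattening/inflation (vertex-split) machinery, which says that for \emph{each individual} flip or length-five cycle there is \emph{some} position of parameters realizing it with the correct signs. That supplies exactly the ``one good point'' your continuity argument needs, cycle by cycle, and avoids the uniform case analysis you flag as the main obstacle.
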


So far we have discussed whether $k$-triangulations are bases in the rigidity \emph{matroid}, but for the polytopality question we are also interested in the \emph{oriented matroid}, which tells us the orientation that each $k$-triangulation has as a basis of the vector configuration. The first thing to notice is that now there is a priori not a unique ``generic'' oriented matroid; different generic choices of points may lead to different orientations of the underlying generic matroid.

Since our points lie in the moment curve, we can refer to each point $(t,\dots, t^{2k})$ via its parameter $t$.
The parameters proving Theorem~\ref{thm:main} are as follows:

\begin{itemize}
    \item For $k=2$, the standard positions ($t_i=i$ for each $i$) realize $\OvAss{2}{n}$ as a polytope if and only if $n\le 9$. For $k=2$ and $n\in\{10,11,12,13\}$ they still realize it as a fan, but not as a polytope. Modifying a bit the positions to $(-2,1,2,3,4,5,6,7,9,20)$ we get a polytopal fan for $\OvAss{2}{10}$ (Lemma~\ref{lemma:standard}).

\item Equispaced positions along a circle, mapped to the moment curve via a birational map, realize $\OvAss{k}{n}$ as a fan for every $(k,n)$ with $2k+2\le n\le 13$ except $(3,12)$ and $(3,13)$, and they realize $\OvAss{3}{10}$ as a polytope (Lemma~\ref{lemma:310}).
\end{itemize}

Our experiments show a difference between the case $k=2$, in which all the positions along the moment curve that we have tried realize $\OvAss{k}{n}$ at least as a fan, and the case $k\ge 3$, in which we show that the standard positions do not realize $\OvAss{k}{2k+3}$ as a fan (realizing $\OvAss{k}{n}$ for $n<2k+3$ is sort of trivial):

\begin{theorem}
		\label{thm:Desargues}
		The graph $K_9 - \{16, 37, 49\}$ is a $3$-triangulation of the $n$-gon, but it is a circuit in the cofactor rigidity matroid $\CC_6(\q)$ if the position $\q$ makes the lines through $16$, $37$ and $49$ concurrent.	
This occurs, for example, if we take points along the parabola with $t_i=i$.
	\end{theorem}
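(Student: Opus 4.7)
First I would verify the combinatorial claim that $G := K_9 \setminus \{16, 37, 49\}$ is a $3$-triangulation of the $9$-gon. The edge count $\binom{9}{2} - 3 = 33$ agrees with $k(2n-2k-1)$ for $(k,n) = (3,9)$, the required size, so it suffices to show $G$ is $4$-crossing-free. Any four pairwise crossing edges in the $9$-gon must use $8$ distinct vertices, and for each choice of these $8$ vertices the $4$-crossing is uniquely determined (each vertex paired with its cyclic opposite). Thus there are exactly nine candidate $4$-crossings in $K_9$, indexed by the omitted vertex, and a direct case analysis shows each contains at least one of $\{16, 37, 49\}$. For example, omitting vertex $5$ gives $\{16, 27, 38, 49\}$ (containing $16$ and $49$) and omitting vertex $1$ gives $\{26, 37, 48, 59\}$ (containing $37$); the remaining seven cases are analogous.

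For the concurrency claim on the parabola with $t_i = i$: the line through $(a, a^2)$ and $(b, b^2)$ has equation $y = (a+b)x - ab$. The three relevant lines are therefore $y = 7x-6$, $y = 10x - 21$, $y = 13x - 36$, which all pass through the point $(5, 29)$. Equivalently, the $3 \times 3$ determinant of their coefficient vectors vanishes.

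The main step is to show that when the three lines are concurrent, the $33$ rows of $\CC_6(\q)$ indexed by $G$ are linearly dependent \emph{with a dependency of full support}, so that $G$ is a circuit rather than just a dependent set. I would use the polynomial interpretation of cofactor rigidity: the row for edge $ij$ is built from $\ell_{ij}^{2k-1} = \ell_{ij}^{5}$ viewed in a suitable polynomial quotient, and dependencies among rows correspond to polynomial identities of the form $\sum c_{ij} \ell_{ij}^{5} \equiv 0$. The concurrency yields a linear relation $\lambda_1 \ell_{16} + \lambda_2 \ell_{37} + \lambda_3 \ell_{49} = 0$, which—together with the Desargues configuration coming from the perspective triangles $\{1,3,4\}$ and $\{6,7,9\}$ (forcing collinearity of $\ell_{13} \cap \ell_{67}$, $\ell_{34} \cap \ell_{79}$, $\ell_{14} \cap \ell_{69}$)—should produce an explicit identity, and hence the required dependency vector on the remaining $33$ edges. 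The hardest part will be producing this vector cleanly from the projective incidences and certifying that its support covers all $33$ edges of $G$ (otherwise $G$ would be dependent but not a minimal dependent set). In practice I expect this step to rely either on a Desargues/Pl\"ucker-based projective calculation or on a direct symbolic verification of the ranks of the relevant $33 \times 54$ submatrices of $\CC_6(\q)$ under the concurrency constraint.
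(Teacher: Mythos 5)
Your verification that $K_9 \setminus \{16,37,49\}$ is a $3$-triangulation is correct (nine candidate $4$-crossings, one per omitted vertex, each hitting the missing matching), and your concurrency computation for $t_i=i$ is right (all three chord lines pass through $(5,29)$). But the main step --- that the $33$ rows of $C_6(\q)$ indexed by $G$ are dependent with a dependency of full support --- is not actually proved in your proposal; it is a plan (``should produce an explicit identity,'' ``I expect this step to rely on\dots''). Producing and certifying full support for a $33\times 54$ dependency directly from Pl\"ucker relations or symbolic rank checks is precisely the part that needs a structural idea, and the one you are missing is the \emph{coning theorem}.

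The paper's proof is short because $K_9 \setminus \{16,37,49\}$ is the triple cone, at the three new vertices $2$, $5$, $8$, over the octahedral graph $K_6 \setminus \{16,37,49\}$ on vertex set $\{1,3,4,6,7,9\}$. Proposition~\ref{prop:coning-cofactor2} says the oriented matroid $\CC_d$ on the base is the contraction of $\CC_{d+1}$ on the cone at the cone vertex's edges; iterating three times, the circuit question in $\CC_6(\q)$ for $K_9 \setminus \{16,37,49\}$ reduces to the circuit question in $\CC_3$ of $K_6$ minus a perfect matching on the six points $\q|_{\{1,3,4,6,7,9\}}$. That is exactly the Morgan--Scott split: by Corollary~\ref{coro:cofact} the octahedron is a circuit in $\CC_3$ precisely when the six points are in Desargues position (the three deleted chords concurrent), and Corollary~\ref{coro:octahedron} checks Desargues position for the parameters $(1,3,4,6,7,9)$ on the parabola. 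The coning theorem does the work you are trying to do by hand: it transports both dependence and minimality (full support) from the $6$-vertex case to the $9$-vertex case at once, so no $33\times 54$ calculation is required. Without this reduction --- or an equivalent mechanism --- your plan has a genuine gap at its central step.
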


This shows that Conjecture~\ref{conj:rigid-k=2} fails for $k\ge 3$, and we prove that it fails in the worst possible way.
We consider this our second main result, after Theorem~\ref{thm:main}:

\begin{theorem}\label{thm:2k+6}
If $k\ge 3$ and $n\ge 2k+6$ then no choice of points $\q\in \R^2$ in convex position makes the cofactor rigidity $C_{2k}(\q)$ realize the $k$-associahedron $\OvAss{k}{n}$ as a fan.
The same happens for bar-and-joint rigidity and for hyperconnectivity with any choice of points along the moment curve.
\end{theorem}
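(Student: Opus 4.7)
The plan is to leverage Theorem~\ref{thm:Desargues} as a ``seed'' dependence and bootstrap it into an obstruction that is independent of the point configuration. We treat cofactor rigidity first; the bar-and-joint and hyperconnectivity statements along the moment curve then follow immediately from the linear equivalence of the three vector configurations established in~\cite{CreSan:moment}.

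For cofactor, fix $k\ge 3$ and $n\ge 2k+6$. The goal is: for every convex $\q\in \R^2$, exhibit a $k$-triangulation $T(\q)$ of the $n$-gon that contains a circuit of $\CC_{2k}(\q)$; such a $T(\q)$ cannot span a simplicial cone of full dimension, which already obstructs the fan realization. I would first handle the seed case $(k,n)=(3,12)$. Theorem~\ref{thm:Desargues} supplies, for each relevant $9$-subset of vertices, a Desargues-type circuit whose dependence is controlled by the concurrency of three secant lines. With $12$ vertices and many $3$-triangulations available, the plan is to select a finite family $\mathcal F$ of $3$-triangulations of the $12$-gon such that (i) each $T\in\mathcal F$ contains a $K_9-\{ab,cd,ef\}$ pattern on some $9$-subset, and (ii) the union of the associated concurrency hypersurfaces covers the entire space of convex $12$-point configurations of $\R^2$. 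Any convex $\q$ then satisfies at least one of these concurrencies, making the corresponding $T\in\mathcal F$ dependent.

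The reduction from general $(k,n)$ with $k\ge 3$ and $n\ge 2k+6$ to the base case $(3,12)$ proceeds via the inductive link structure of multiassociahedra. Links of suitable vertices or faces of $\OvAss{k}{n}$ are again multiassociahedra with reduced parameters, and the corresponding links of the cofactor vector configuration are cofactor matroids on convex subconfigurations of $\q$. Iterating the link operation, any fan realization of $\OvAss{k}{n}$ would restrict to a fan realization of $\OvAss{3}{m}$ for some $m\ge 12$ on a convex subconfiguration, contradicting the base case. This part is essentially formal once the seed case is established.

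The main obstacle is the geometric step at $(3,12)$: proving that \emph{every} convex $12$-point configuration in $\R^2$ satisfies at least one of the chosen Desargues concurrencies. This is a projective-incidence statement about the arrangement of secant lines spanned by pairs of points on a convex curve, and is the most delicate ingredient; one either finds the right finite family $\mathcal F$ by direct enumeration and case analysis on the $\binom{12}{9}$ candidate $9$-subsets, or constructs auxiliary position-independent circuits by overlapping several Desargues patterns so as to eliminate the concurrency assumption altogether. Once this geometric statement is in place, the rest of the argument goes through by the link calculus and the rigidity equivalence of~\cite{CreSan:moment}.
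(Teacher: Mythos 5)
Your plan has a fundamental flaw in the geometric step that cannot be repaired as stated, and it also misses the key mechanism behind the paper's obstruction.

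Your central claim is that for every convex configuration of $12$ points, some $3$-triangulation must be a \emph{circuit} (a dependence) of the cofactor matroid, and you propose to show this by finding a finite family of Desargues concurrency hypersurfaces whose union covers the whole space of convex $12$-point configurations. This is impossible: each concurrency is a codimension-one algebraic condition, so a finite union of these hypersurfaces is a measure-zero set and cannot cover an open subset of configuration space. Relatedly, for a \emph{generic} convex $\q$ every $k$-triangulation is a basis of $\CC_{2k}(\q)$ (Theorem~\ref{thm:rigid} proves this for $k=2$, and for $k\ge 3$ genericity still makes $T_1, T_2$ independent); so the obstruction cannot live at the level of dependencies. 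The vague fallback you mention ("position-independent circuits by overlapping several Desargues patterns") is not developed and runs into the same issue.

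The idea you are missing is that the obstruction is a \emph{sign} (orientation) obstruction, not a degeneracy. Even when every $k$-triangulation is a basis, the fan realization can fail because the ICoP sign conditions of Corollary~\ref{coro:fan} are violated. The paper first sharpens Theorem~\ref{thm:cofact} into Theorem~\ref{thm:star}: for $n=2k+3$ the cofactor configuration realizes $\OvAss{k}{2k+3}$ as a fan if and only if the big half-planes of the relevant edges have a common point, and this is encoded as an orientation condition on hexagons of six of the $2k+3$ points (Lemma~\ref{lemma:des39}). Then the proof of Theorem~\ref{thm:2k+6} reduces to $n=2k+6$ via Lemma~\ref{lemma:monotone} and considers two $(2k+3)$-element subsets $I_1$ and $I_2$ of the $n$ vertices, chosen so that the same hexagon $(q_1,q_3,q_5,q_{k+4},q_{k+6},q_{k+8})$ must be negatively oriented for $\q|_{I_1}$ to realize the fan and positively oriented for $\q|_{I_2}$ to realize it. The point is that the identity of the "big" half-plane of an edge such as $\{1,k+4\}$ flips when you change which $k$ vertices are removed, and this built-in incompatibility holds for \emph{every} convex $\q$, not just on a thin stratum. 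Your plan, by aiming at dependencies, can only ever reach a measure-zero set of configurations; you should instead aim at the orientation conditions in Theorem~\ref{thm:star}.

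As a secondary remark, your further reduction to the single base case $(3,12)$ via iterated links is plausible in principle (Lemma~\ref{lemma:monotone}(2) together with Proposition~\ref{prop:coning-cofactor2} lets you trade $k$ for $n-2$), but the paper avoids it because the orientation contradiction at $(k,2k+6)$ is already uniform in $k$, so there is nothing to gain.
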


Let us explain this statement. Cofactor rigidity, introduced by Whiteley following work of Billera on the combinatorics of splines, is related to the existence of $(d-2)$-continuous splines of degree $d-1$, for a certain parameter $d$. For this reason it is usually denoted $C_{d-2}^{d-1}$-rigidity, although we prefer to denote it $C_d$-rigidity since, as said above, it induces an example of abstract rigidity matroid of dimension $d$. Since this form of rigidity is based on choosing positions for $n$ points in the plane, it is  the most natural rigidity theory in the context of $k$-triangulations; for any choice $\q$ of $n$ points in convex position in the plane, we have at the same time a convex $n$-gon on which we can model $k$-triangulations and a $2k$-dimensional rigidity matroid $C_{2k}(\q)$ whose rows we can use as vectors to (try to) realize $\OvAss{k}{n}$ as a fan. For $n=2k+3$ we show that this realization, taking as points the vertices of a regular $n$-gon, always realizes $\OvAss{k}{n}$ as a fan (Corollary \ref{coro:2k+3}), but the above statement says that for $n\ge 2k+6$ (and $k\ge 3$) no points in convex position do. As said above, $C_{2k}(\q)$ with points along a parabola is equivalent to bar-and-joint rigidity and to hyperconnectivity with points along the moment curve in $\R^{2k}$.

\begin{remark}
Theorem~\ref{thm:2k+6} still leaves open the possibility of realizing $\OvAss{k}{n}$ for $n\ge 2k+6 \ge 12$ via bar-and-joint rigidity or via hyperconnectivity, but it would need to be with a choice of points not lying in the moment curve. We have not explored this possibility because we cannot think of  a ``natural'' choice of $n$ points in $\R^{2k}$.

Also, observe that for $k\in\{3,4\}$ this theorem and Theorem~\ref{thm:main} (or, rather, its more precise version Lemma~\ref{lemma:310}) completely settle realizability of $\OvAss{k}{n}$ as a fan via cofactor rigidity: it can be done for $n\le 2k+5$ and it cannot for $n\ge 2k+6$.
\end{remark}

\begin{remark}
$k$-triangulations can be bipartized by taking two copies $i^+$ and $i^-$ of each $i\in [n]$ and turning each edge $(i,j)$ of a $k$-triangulation (with $i<j$) into the edge $(i^+,j^-)$. It turns out that when this is done $k$-triangulations are related to fillings of certain polyominos and to bipartite rigidity theory in dimension $k$ (instead of $2k$); for example, readers can easily convince themselves that bipartized triangulations become spanning trees, which are the bases of $1$-dimensional rigidity. Using these ideas,  the follow-up paper~\cite{Crespo:bipartite} by the first author recovers
most of the results of this paper in the context of bipartite rigidity, including an obstruction similar to that of Theorem~\ref{thm:2k+6}.
\end{remark}

From a computational viewpoint, our methods have three parts (see more details in Section~\ref{subsec:experiments}):
\begin{enumerate}
\item First, for given $k,n$, we enumerate all the $k$-triangulations of the $n$-gon. To do this we have adapted code by Vincent Pilaud which uses the relations between $k$-triangulations and sorting networks~\cite{PilPoc}. Although computationally easy, this is the bottleneck of the process because of the large number of $k$-triangulations. (Jonsson~\cite{Jonsson1} proved that the number of $k$-triangulations of the $n$-gon is a Henkel determinant of Catalan numbers, hence growing as ${C_{n}}^{k}$ times a rational function of degree $2k$ in $n$, where $C_n$ denotes the $n$-th Catalan number). 
In all cases where we have been able to enumerate all $k$-triangulations, we have also been able to decide whether given positions realize the fan and/or the polytope.

\item Then, our code tests whether, for given positions, the rigidity matrix realizes the fan or not. We have always used points along the parabola/moment curve (for which the three rigidity theories are equivalent), but the code would work for arbitrary positions and for the three theories. The running time is essentially linear in the number of $k$-triangulations, with a factor depending on $k$ and $n$ since we are doing linear algebra in $\R^{k(2n-2k-1)}$.

\item Finally, when the second step works,  deciding whether the fan (in the obtained realization) is polytopal is equivalent to feasibility of a linear program with one variable for each ray of the  $\binom{n}{2}$ diagonals of the $n$-gon and $k(n-2k-1)$ constraints for each $k$-triangulation.
\end{enumerate}

To choose the positions for the points we use a bit of trial and error. By default we start with equispaced points along the parabola and along the circle, and when both of them fail we modify the positions.

\subsection*{Structure of the paper}

Sections~\ref{sec:multitriangs},~\ref{sec:polytopal},  and~\ref{sec:rigidity}
deal respectively with properties of $k$-triangulations and of $\OvAss{k}{n}$, with how to prove polytopality of a simplicial complex, and with background on the three forms of rigidity (bar-and-joint, cofactor, and hyperconnectivity). They contain mostly introductory and review material but each of them contains also new results. Among them:

\begin{itemize}
\item Corollary~\ref{coro:cycles}: All $1$-dimensional links in any $\OvAss{k}{n}$ have length at most $5$. (That is: all $2$-faces of dual complexes are at most pentagons).

\item Theorem~\ref{thm:winding} and  Proposition~\ref{prop:shortcycles}: if a pure simplicial complex, embedded in a certain vector configuration $\VV$, satisfies the so-called ``interior cocircuit property'' (see \cite[Chapter 4]{triangbook}), for it to be realized as a fan it is sufficient that every link of dimension one is embedded as a simple cycle (a cycle winding only once). This condition holds automatically for cycles of length at most four, so in the case of $\OvAss{k}{n}$ it needs to be checked only for cycles of length five.
\end{itemize}

Section~\ref{sec:obstructions} describes the obstructions for realizability that we have found for $k\ge 3$.  We state and prove them in the context of cofactor rigidity. 
They follow from the so-called \emph{Morgan-Scott split} in the theory of splines, which says that the graph of an octahedron ($K_6$ minus three disjoint edges) is dependent in cofactor $3$-rigidity if and only if the points are in \emph{Desargues position} (that is, the three missing edges are concurrent). After recalling some facts on cofactor rigidity in Section \ref{sec:cofactor}, in Section~\ref{sec:desargues} we prove an oriented version of this obstruction (Theorem \ref{thm:cofact}), and then use it in Section~\ref{sec:obstruction} to prove Theorem~\ref{thm:2k+6}.
Along the way we characterize exactly what choices of points in convex position realize the fan $\OvAss{k}{2k+3}$ (Theorem \ref{thm:star}).

Our final Section~\ref{sec:triangsrigid} contains most of our positive results on realizability of $\OvAss{k}{n}$ with points along the moment curve/parabola.
In Section~\ref{subsec:vertex-split} we prove Theorem~\ref{thm:rigid} along the same lines used in~\cite{PilSan} for bar-and-joint rigidity. In Section~\ref{subsec:cycles} we prove a local version of realizability as a fan: we show that for every $1$-dimensional link in $\OvAss{2}{n}$ there are positions along the moment curve realizing that cycle as a collection of bases satisfying ICoP and with winding number equal to one (Theorem~\ref{thm:good_cycle}), which imply Theorem \ref{thm:fan-k=2}. We also show that  \emph{arbitrary} positions along the moment curve realize $\OvAss{k}{2k+2}$ and $\OvAss{2}{7}$ as polytopes (Corollaries~\ref{coro:2k+2} and~\ref{coro:small-n}). Section~\ref{subsec:experiments} contains details of our experiments, which imply Theorem~\ref{thm:main}.

\subsubsection*{Acknowledgement.} 
We thank Vincent Pilaud for providing us with his code for enumerating $k$-triangulations,
which we then adapted to test polytopality.

\section{Preliminaries and background}

\subsection{Multitriangulations}
\label{sec:multitriangs}
\label{sec:multi}

Let us recall in detail the definition of the $k$-associahedron. As mentioned in the introduction, it is a simplicial complex with vertex set 
\[
\bnn:=\{\{i,j\}: i,j \in [n], i<j\}.
\]

\begin{definition}
Two disjoint elements $\{i,j\},\{k,l\}\in \bnn$, with $i<j$ and $k<l$, of $\binom{[n]}2$ \emph{cross} if $i<k<j<l$ or $k<i<l<j$. That is, if they cross when seen as diagonals of a cyclically labeled convex $n$-gon.

A \emph{$k$-crossing} is a subset of $k$ elements of $\binom{[n]}2$ such that every pair  cross. A subset of $\binom{[n]}2$ is \emph{$(k+1)$-crossing-free} if it doesn't contain any $(k+1)$-crossing. A \emph{$k$-triangulation} is a maximal $(k+1)$-crossing-free set.
\end{definition}

Observe that whether two pairs $\{i,j\},\{k,l\}\in \bnn$ cross is a purely combinatorial concept, but it captures the idea that the corresponding diagonals of a convex $n$-gon  geometrically cross.

The \emph{length} of an edge $\{i,j\}\in\bnn$, is $\min\{|j-i|, n- |j-i|\}$. That is, the distance from $i$ to $j$ measured cyclically in $[n]$.
Edges of length at most $k$ cannot participate in any $k+1$-crossing and, hence, all of them lie in every $k$-triangulation. We call edges of length at least $k+1$ \emph{relevant} and those of length at most $k-1$ \emph{irrelevant}. The ``almost relevant'' edges, those of length $k$, are called \emph{boundary edges} and, although they lie in all $k$-triangulations, they still play an important role in the theory (see Proposition~\ref{prop:stars}(3)).

By definition, $(k+1)$-crossing-free subsets form an abstract simplicial complex on the vertex set $\binom{[n]}2$, whose facets are the $k$-triangulations and whose minimal non-faces are the $(k+1)$-crossings. We denote this complex $\Ass{k}{n}$. Since the $kn$ irrelevant and boundary edges lie in every facet, it makes sense to consider also the \emph{reduced complex} $\OvAss{k}{n}$. Technically speaking, we have that $\Ass{k}{n}$ is the join of $\OvAss{k}{n}$ with the irrelevant face (the face consisting of irrelevant and boundary edges).

Multitriangulations were studied (under a different name) by Capoyleas and Pach~\cite{cp-92}, who showed that no $(k+1)$-crossing-free subset has more than $k(2n-2k-1)$ edges. That is, the complex $\Ass{k}{n}$ has dimension $k(2n-2k-1)-1$, hence $\OvAss{k}{n}$ has dimension $k(n-2k-1)-1$.
The main result about $\OvAss{k}{n}$ for the purposes of this paper is the following particular case of a theorem of Knutson and Miller:

\begin{theorem}[Knutson-Miller \cite{KnuMil}]
\label{thm:sphere}
$\OvAss{k}{n}$ is a shellable sphere of dimension $k(n-2k-1)-1$.
\end{theorem}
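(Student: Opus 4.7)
The plan is to identify $\OvAss{k}{n}$ with a \emph{subword complex} for a Coxeter system of type $A$, and then invoke the general structural theorem of Knutson--Miller about subword complexes. Recall that given a Coxeter system $(W,S)$, a word $Q=(q_1,\dots,q_N)$ with letters in $S$, and a target $w\in W$, the subword complex $\Delta(Q,w)$ has vertex set $[N]$ and faces those subsets of positions whose complement (read in order) contains a reduced expression for $w$. The Knutson--Miller theorem asserts that every such complex is vertex-decomposable (hence shellable), that it is topologically a ball or a sphere, and that it is a sphere precisely when the Demazure product of $Q$ equals $w$; in either case its dimension is $N-\ell(w)-1$.

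The first step, then, is to produce an explicit bijection between the relevant edges of the $n$-gon and the letters of a specific word $Q$ in the Coxeter group $W=S_{n-2k}$ of type $A_{n-2k-1}$, chosen so that the minimal non-faces of $\OvAss{k}{n}$ (namely the $(k+1)$-crossings) correspond exactly to the subword patterns in $Q$ whose complements fail to contain a reduced expression for the longest element $w_0\in W$. A convenient choice is the word obtained by ``reading off'' the $\binom{n}{2}-kn$ relevant diagonals of the cyclic $n$-gon in a suitable order, assigning to each diagonal a simple reflection determined by its length modulo the pattern coming from a reduced expression of $w_0$. Under this identification, one checks directly that a subset of diagonals is $(k+1)$-crossing-free if and only if the complementary positions in $Q$ contain a reduced expression for $w_0$; this is the content of the combinatorial translation of multitriangulations into pipe dreams.

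Once this identification is established, the remaining steps are essentially bookkeeping. One verifies that the chosen $Q$ has Demazure product equal to $w_0$ (equivalently, that $Q$ itself contains $w_0$ as a subword), which ensures that $\Delta(Q,w_0)$ is a sphere rather than a ball. The Knutson--Miller theorem then gives shellability and the spherical topology. Finally, the dimension is computed as
\[
N-\ell(w_0)-1 = \left(\binom{n}{2}-kn\right) - \binom{n-2k}{2} - 1 = k(n-2k-1)-1,
\]
matching the claim.

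The main obstacle is the first step: setting up the bijection between diagonals of the $n$-gon and letters of $Q$ in such a way that $(k+1)$-crossings translate cleanly to non-reduced subword obstructions. This combinatorial translation is delicate because it must simultaneously respect the cyclic symmetry inherent to the diagonal model and the linear order inherent to the word model; once it is in place, the sphericity, shellability and dimension all follow uniformly from the Knutson--Miller machinery.
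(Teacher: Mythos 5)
The paper does not prove this statement; it cites it, and its citation chain runs exactly along the route you describe: Knutson--Miller's structural theorem for subword complexes (\cite{KnuMil}) combined with Stump's identification of $\OvAss{k}{n}$ as a spherical subword complex of type $A_{n-2k-1}$ (\cite[Theorem 2.1]{Stump}). Your outline is correct in that framing, and your arithmetic checks out: with $N = \binom{n}{2}-kn$ and $\ell(w_0)=\binom{n-2k}{2}$ one indeed gets $N-\ell(w_0)-1 = k(n-2k-1)-1$.

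That said, the ``main obstacle'' you flag at the end is not a minor detail to be filled in on the fly --- it is the entire content of Stump's theorem. Your description of the word $Q$ (``assigning to each diagonal a simple reflection determined by its length modulo the pattern coming from a reduced expression of $w_0$'') is too vague to be checked, and the claim that $(k+1)$-crossings translate to the obstruction of finding a reduced subword of $w_0$ is precisely what needs a genuine argument. In Stump's construction the word $Q$ is an explicit concatenation of rotated reduced words in $S_{n-2k}$, and the bijection between positions in $Q$ and relevant diagonals is nontrivial to verify. As written, your proposal reduces Theorem~\ref{thm:sphere} to another theorem without proving it; you should either cite Stump explicitly at this step, or supply the construction and verification. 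Alternatively, a genuinely different and self-contained route is Jonsson's original argument, which proves vertex-decomposability of $\OvAss{k}{n}$ directly by an induction using the flattening/inflation operations on multitriangulations (closely related to the flattening of stars used in Section~\ref{subsec:vertex-split} of this paper), with no Coxeter machinery at all. That route trades uniform subword-complex technology for a hands-on combinatorial induction, and is worth knowing since it does not rely on the type-$A$ identification.

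One smaller point: your parenthetical ``equivalently, that $Q$ itself contains $w_0$ as a subword'' for the condition $\delta(Q)=w_0$ is correct (the Demazure product is the maximum, in Bruhat order, of the elements representable by subwords of $Q$, and $w_0$ is the top of that order), but it deserves a one-line justification rather than being stated as obvious.
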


The following lemma shows that the realizability question we want to look at is \emph{monotone}; if we have a realization of $\OvAss{k}{n}$ then we also have it for all $\OvAss{k'}{n'}$ with $k'\le k$ and $n'-2k'\le n-2k$. Remember that the link of a face $F$ in a simplicial complex $\Delta$ is
\[
	\link_\Delta(F) := \{ G\in \Delta: G\cap F=\emptyset, G\cup F\in \Delta\} = \{\sigma\setminus F: \sigma\in \Delta, F\subset\sigma\}.
\]
In a shellable $d$-sphere the link of any face of dimension $d'$ is a shellable $d-d'-1$-sphere.

\begin{lemma}[Monotonicity]
\label{lemma:monotone}
Let $n \ge 2k+1$. 
Then, both $\OvAss{k}{n}$ and $\OvAss{k-1}{n-1}$ appear as links in $ \OvAss{k}{n+1}$. More precisely:
\begin{enumerate}
\item 
$\OvAss{k}{n} = \link_{\OvAss{k}{n+1}}(B_{n+1})$, 
where $B_{n+1}:= \{\{n-k+i,i\}: i=1,\dots,k\}\in \bnn$ is the set of edges of length $k+1$ leaving $n+1$ in their short side.
\item
$ \OvAss{k-1}{n-1} \cong \link_{\OvAss{k}{n+1}}(E_{n+1})$,
where $E_{n+1}=\{\{i,n+1\}: i \in [k+1,n-k]\}\in \binom{[n+1]}{2}$ is the set of relevant edges using $n+1$.
\end{enumerate}
\end{lemma}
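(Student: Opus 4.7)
The plan is to prove each item by identifying the vertex set of the link with that of the corresponding multiassociahedron and then verifying that the face structures coincide.

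For part (1), I would first classify the relevant edges of $[n+1]$ into three types: (i) those $\{a,b\}$ with $a,b\in [n]$ and $b-a\in [k+1, n-k-1]$, which coincide with the relevant edges of $[n]$; (ii) the $k$ edges $\{i, n-k+i\}$ with $i\in [1,k]$ comprising $B_{n+1}$; and (iii) the edges $\{i, n+1\}$ with $i\in[k+1, n-k]$. A direct check on the cyclic order $j, i, n-k+j, n+1$ shows that each edge of type (iii) crosses all $k$ edges of $B_{n+1}$, so it creates a $(k+1)$-crossing and cannot be a vertex of the link. This identifies the two vertex sets. For the face condition, I would use that crossings among elements of $\bnn$ are the same in $[n]$ and in $[n+1]$, because inserting $n+1$ between $n$ and $1$ does not alter the cyclic order of $[n]$. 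Furthermore, each edge of $B_{n+1}$ has length exactly $k$ in $[n]$, so it is a boundary edge; a pigeonhole argument on the $k-1$ vertices of its short arc shows that no boundary edge can belong to any $(k+1)$-crossing of $[n]$. Therefore $G\cup B_{n+1}$ has a $(k+1)$-crossing in $[n+1]$ if and only if $G$ has one in $[n]$, finishing part (1).

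For part (2), the natural bijection is $\phi\colon \{a,b\}\mapsto \{a,b-1\}$. It carries edges satisfying $k+1\le b-a\le n-k$ onto edges satisfying $k\le (b-1)-a\le n-1-k$, that is, relevant edges of $[n+1]$ not using $n+1$ bijectively onto relevant edges of $[n-1]$; and the vertices of $\link_{\OvAss{k}{n+1}}(E_{n+1})$ are exactly the former set, because every relevant edge using $n+1$ already lies in $E_{n+1}$. The key step would be to show that $G\cup E_{n+1}$ is $(k+1)$-crossing-free in $[n+1]$ if and only if $\phi(G)$ is $k$-crossing-free in $[n-1]$. Since the edges of $E_{n+1}$ pairwise share vertex $n+1$, any $(k+1)$-crossing of $G\cup E_{n+1}$ uses at most one edge of $E_{n+1}$, so the face condition of the link splits into: (a) $G$ has no $(k+1)$-crossing, and (b) no $k$-crossing $\{e_1,\dots,e_k\}$ of $G$ (ordered so that $a_1<\dots<a_k<b_1<\dots<b_k$) admits an integer $i\in (a_k,b_1)\cap [k+1,n-k]$. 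The relevance bounds $a_k\le n-k-1$ and $b_1\ge k+2$ come for free, so such an $i$ exists if and only if $b_1\ge a_k+2$. A parallel computation shows that $\phi(e_1),\dots,\phi(e_k)$ form a $k$-crossing in $[n-1]$ if and only if $a_j\le b_i-2$ for all $i<j$, which reduces to the same inequality $b_1\ge a_k+2$. A violation of (a) gives a $k$-subcrossing with $b_1\ge a_{k+1}+1\ge a_k+2$, and conversely any $k$-crossing of $\phi(G)$ lifts via $\phi^{-1}$ to a $k$-crossing of $G$ that extends via some edge of $E_{n+1}$, yielding the required equivalence.

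The main obstacle will be the analysis of $\phi$ in part (2): a pair of crossing edges with $b_1=a_2+1$ has $\phi$-image in $[n-1]$ sharing the vertex $a_2$, so $\phi$ does not preserve pairwise crossings. The reconciliation is that precisely these crossings cannot be completed to $(k+1)$-crossings via $E_{n+1}$, so the mismatch is exactly what is needed for the face conditions to correspond under $\phi$.
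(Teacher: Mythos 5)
Your proof is correct in its essentials, but it takes a genuinely different route from the paper. The paper's proof invokes the Knutson--Miller theorem (Theorem~\ref{thm:sphere}) to note that $\OvAss{k}{n}$, $\OvAss{k-1}{n-1}$ and $\OvAss{k}{n+1}$ are shellable spheres, computes dimensions to see that the link is a sphere of the \emph{same} dimension as the target complex, and then exploits the fact that a sphere cannot be a proper subcomplex of another sphere of the same dimension -- so only one inclusion needs to be checked. Your proposal instead verifies the isomorphism directly: you classify the vertices of the link and match the face conditions on both sides of the bijection, proving both inclusions by hand. What you gain is independence from the sphere theorem (your argument is purely combinatorial and self-contained); what it costs you is the more delicate two-way analysis in part (2), where you must show the equivalence of ``$G\cup E_{n+1}$ is $(k+1)$-crossing-free'' and ``$\phi(G)$ is $k$-crossing-free'', whereas the paper only needs one direction. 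Your reduction of both conditions to the single inequality $b_1\geq a_k+2$ is the right observation and does make the two-way check clean; in fact your treatment of the boundary phenomenon ($b_1=a_k+1$, where $\phi$ collapses a crossing to a shared vertex) is more carefully spelled out than the paper's somewhat terse version.

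Two small gaps worth closing. In part (1), to conclude that an edge $\{j,n+1\}$ of type (iii) is excluded from the link you assert that it ``creates a $(k+1)$-crossing'' with $B_{n+1}$; this needs the additional (easy, but not stated) observation that the $k$ edges of $B_{n+1}$ already pairwise cross in $[n+1]$ -- which holds because for $1\leq i_1<i_2\leq k$ one has $i_2-i_1\leq k-1<n-k$, so $i_1<i_2<n-k+i_1<n-k+i_2$. Without this, crossing each member of $B_{n+1}$ individually would not by itself put $\{j,n+1\}$ outside the link. Second, in part (2) your claim that the vertex set of the link equals the set of relevant edges of $[n+1]$ not using $n+1$ uses (implicitly) that any $(k+1)$-crossing in $E_{n+1}\cup\{e\}$ has at most $2$ edges, hence needs $k+1\leq 2$; this is fine for $k\geq 2$ but fails for $k=1$, where both the link and $\OvAss{0}{n-1}$ degenerate to the complex $\{\emptyset\}$. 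You should either restrict the vertex-set claim to $k\geq 2$ and dispose of $k=1$ separately, or phrase it so as to cover that trivial case.
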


\begin{proof}
By Theorem~\ref{thm:sphere}, the three complexes $\OvAss{k}{n}$, $ \OvAss{k-1}{n-1}$ and $ \OvAss{k}{n+1}$ are spheres, of the appropriate dimensions. For example,
\[
\dim(\OvAss{k}{n+1}) = k(n+1-2k-1)-1 = k(n-2k)-1.
\]
Since the link of a face of size $j$ in a shellable sphere is a sub-sphere of codimension $j$, the right-hand sides in both equalities are spheres of respective dimensions
\[
\dim(\OvAss{k}{n+1}) - k = k(n-2k) - 1 -k= k(n-2k-1)-1= \dim(\OvAss{k}{n})
\]
in part (1) and
\begin{align*}
\dim(\OvAss{k}{n+1}) - (n-2k) &= k(n-2k) - 1 -(n-2k)\\
&= (k-1)(n-2k)-1 &= \dim(\OvAss{k-1}{n-1})
\end{align*}
in part (2).
Two simplicial spheres of the same dimension cannot be properly contained in one another, so in both equalities we only need to check one containment (with a relabelling of the complex allowed in part (2)) and the other containment then follows automatically.
 
In part (1) we show $\OvAss{k}{n} \subset \link_{\OvAss{k}{n+1}}(B_{n+1})$. That is, for every $k$-triangulation $T$ of the $n$-gon with vertices $[n]$ we need to check that $T\cup B_{n+1}$ is $(k+1)$-crossing-free. This is because all the edges in $B_{n+1}$ have length $k+1$ and have $n+1$ in their short side, so any $(k+1)$-crossing involving one of them needs to use the vertex $n+1$. But $T\cup B_{n+1}$ has no (relevant) edge using $n+1$.

For part (2), we consider the following map
\begin{align*}
\phi: &\bnn \to \binom{[n-1]}{2}\\
& \{i,j\} \mapsto  \{i,j-1\}, \quad 1\le i<j\le n.
\end{align*}
The map $\phi$ is a bijection between the $k$-relevant edges in $\binom{[n+1]}{2}$ not using $n+1$ and the $(k-1)$-relevant edges in $\binom{[n-1]}{2}$. Moreover, the map reduces crossing of pairs of edges. More precisely, $\phi(e)$ and $\phi(f)$ cross if and only if $e$ and $f$ crossed and were not of the form $\{i,j+1\}$, $\{j,\ell+1\}$ for some $1\le i < j < \ell \le n$.

Hence, if $T$ is a $k$-triangulation in $\OvAss{k}{n+1}$ containing $E_{n+1}$ then its image $\phi(T\setminus E_{n+1})$ is $(k+1)$-crossing-free. We need to check that it is also $k$-crossing-free. For this, consider a $(k+1)$-crossing $C$ in $T\setminus E_{n+1}$. Two things can happen:
\begin{itemize}
\item $C$ uses an edge of $E_{n+1}$, so it is no longer a $(k+1)$-crossing in $\phi(T\setminus E_{n+1})$.
\item $C$ does not use any edge of $E_{n+1}$. Then, $C$ is of the form $\{\{a_i,b_i\}:i\in [k+1]\}$ with $1\le a_1 < \dots < a_{k+1} < b_1 < \dots < b_{k+1} \le n$. But we need $b_1 = a_{k+1}+1$, or otherwise  $C\cup \{\{a_{k+1}+1,n+1\}\}$ is a $(k+2)$-crossing in $T\setminus E_{n+1}$. Hence, $\phi(C)$ is no longer a $(k+1)$-crossing because $\phi(\{a_1,b_1\})$ and $\phi(\{a_{k+1},b_{k+1}\})$ do not cross.
\end{itemize}
\end{proof}

%
%

Being a sphere (more precisely, being a pseudo-manifold) has the following important consequence:

\begin{proposition}[Flips \cite{DKM,Naka}, see also \protect{\cite[Lemma 5.1]{PilSan}}]
\label{prop:flips}
For every relevant edge $f$ of a $k$-triangulation $T$ there is a unique edge $e\in\binom{[n]}2$ such that 
\[
T\triangle\{e,f\} := T\setminus \{f\}\cup\{e\}
\]
is another $k$-triangulation.
\end{proposition}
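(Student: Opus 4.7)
The plan is to derive both existence and uniqueness of the flip directly from the fact that $\OvAss{k}{n}$ is a simplicial sphere (Theorem~\ref{thm:sphere}), hence in particular a pseudo-manifold: every codimension-one face is contained in exactly two facets.

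First I would translate the statement to the reduced complex. Since every $k$-triangulation $T$ contains all $kn$ irrelevant and boundary edges as a fixed ``cone part'', the set $\overline T$ of relevant edges of $T$ is a facet of $\OvAss{k}{n}$, and an edge is relevant exactly when it is a vertex of $\OvAss{k}{n}$. Thus, removing a relevant edge $f$ from $\overline T$ produces a face of codimension one in $\OvAss{k}{n}$. By the pseudo-manifold property, there is exactly one facet $\overline{T'}\ne \overline T$ of $\OvAss{k}{n}$ containing $\overline T\setminus\{f\}$. Since both facets have the same cardinality $k(n-2k-1)$ and share all but possibly one element, $\overline{T'}=\overline T\setminus\{f\}\cup\{e\}$ for a unique relevant edge $e$. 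Reattaching the irrelevant and boundary edges to both sides yields $T'=T\setminus\{f\}\cup\{e\}$, giving existence and uniqueness simultaneously.

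The main obstacle is logical rather than computational: one has to ensure that the invocation of Theorem~\ref{thm:sphere} is not circular, i.e.\ that sphericity of $\OvAss{k}{n}$ can be established without appeal to the flip property. This is indeed so, because Knutson--Miller's proof proceeds by vertex decomposition of subword complexes (using Stump's identification of $\OvAss{k}{n}$ as such), which does not rely on Proposition~\ref{prop:flips}. Alternatively, if one preferred a self-contained combinatorial argument, one would have to construct the flip edge $e$ explicitly from the local ``star'' structure of $f$ in $T$, which is the route pursued in the original references~\cite{DKM,Naka} and carries the bulk of the technical work; that would then be the main difficulty.
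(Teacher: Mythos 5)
Your proposal is correct and follows exactly the route the paper indicates: the sentence immediately preceding Proposition~\ref{prop:flips} says ``Being a sphere (more precisely, being a pseudo-manifold) has the following important consequence,'' and your argument is a careful fleshing-out of that derivation, including the worthwhile remark that Knutson--Miller's sphericity proof does not presuppose the flip property and hence no circularity arises.
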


We call the operation that goes from $T$ to $T\triangle\{e,f\}$ a \emph{flip}. The paper~\cite{PilSan} gives a quite explicit description of flips using for this the so-called \emph{stars}:

\begin{definition}[Stars]
\label{defi:stars}
Let $s_0,s_1, \ldots, s_{2k}\in [n]$ be distinct vertices, ordered cyclically. The $k$-star $S$ with this set of vertices is the cycle 
$\{\{s_i,s_{i+k}\}:0\le i\le 2k\}$, with indices taken modulo $2k+1$. 
\end{definition}

In classical terms, a $k$-star is sometimes called a ``star polygon of type $\{2k+1/k\}$'' \cite{Cromwell,GS}.
Observe that every $k$-star $S$ is $(k+1)$-crossing-free but the set $S\cup\{t\}$ where $t$ is a bisector of $S$ is never $(k+1)$-crossing-free. Here, by \emph{bisector} we mean the following:

\begin{definition}[Bisectors]
\label{defi:bisector}
An \emph{angle} consists of two elements $\{a,b\}$ and $\{b,c\}$ in $\bnn$ with a common end-point $b$.  A \emph{bisector} of the angle is any edge $\{b,d\}$ with $d$ lying betwen $a$ and $c$ as seen cyclically from $b$. A \emph{bisector of a star} is a bisector of any of its $2k+1$ angles. That is, 
an edge of the form $\{s_i, t\}$ such that $t$ lies between $s_{i-k}$ and $s_{i+k}$ for some $s_i$ in the star (with the notation of Definition~\ref{defi:stars}).
\end{definition}

In terms of stars and their bisectors, flips can be described as follows:

\begin{proposition}
\label{prop:stars}
Let $T$ be a $k$-triangulation of the $n$-gon. Then:
\begin{enumerate}
    \item $T$ contains exactly $n-2k$ $k$-stars~\cite[Corollary 4.4 and Theorem 3.5]{PilSan}.
    \item Each pair of $k$-stars in $T$ have a unique common bisector~\cite[Theorem 3.5]{PilSan}.
    \item Every relevant edge $e$ in $T$ belongs to exactly two such $k$-stars, and every boundary edge belongs to exactly one~\cite[Corollary 4.2]{PilSan}.
    \item The $k$-triangulation obtained by flipping $e$ in $T$ is $T\triangle\{e,f\}$ where $f$ is the common bisector of the two $k$-stars containing $e$~\cite[Lemma 5.1]{PilSan}.
\end{enumerate}
\end{proposition}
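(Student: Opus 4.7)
The plan is to prove the four parts in the order (3), (1), (2), (4), after first establishing a local existence lemma for $k$-stars around each edge of a $k$-triangulation.

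First I would prove an \emph{extension lemma}: for every relevant edge $e=\{a,b\}$ of a $k$-triangulation $T$ and for each of the two arcs of $[n]$ cut off by $e$, there is a canonical chain of $k-1$ edges of $T$ that, together with $e$, constitutes $k$ of the $2k+1$ edges of a $k$-star. The construction is local: on a fixed side of $e$ one reads off the edges of $T$ crossing $e$, and maximality of $T$ is then used to argue that they line up as a specific staircase pattern. The key input is that if the needed edge were missing at some step, inserting it would not create any $(k+1)$-crossing, contradicting maximality of $T$. A boundary edge has only one non-trivial side, so it yields just one half-star; a relevant edge yields two half-stars whose union consists of $2k+1$ edges.

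The main technical obstacle is verifying that for a relevant edge $e$ these two half-stars close up into a single $k$-star of the form $\{s_i,s_{i+k}\}_{i}$ in the sense of Definition~\ref{defi:stars}, rather than two overlapping substructures. For this I would track the cyclic order on $[n]$ of the vertices appearing in both chains and check that they interleave exactly as required. Since at each step of the local construction the next edge is forced, uniqueness of the star through $e$ on each side also follows. This immediately gives (3): every relevant edge lies in exactly two stars (one per side) and every boundary edge in exactly one.

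Part (1) then follows from a double count of incidences between stars and their edges. If $s$ denotes the number of stars in $T$, then $(2k+1)s$ counts pairs (star, edge of that star), which by (3) equals
\[
2\cdot k(n-2k-1)+n=(2k+1)(n-2k),
\]
so $s=n-2k$. For (2), given two stars $S_1,S_2\subset T$, a case analysis on how their $2(2k+1)$ vertices interleave cyclically on $[n]$ shows that there is exactly one edge bisecting an angle of each simultaneously; existence comes from parity of the interleaving pattern, and uniqueness from the fact that any second such edge, together with the bisected angles, would produce a $(k+1)$-crossing. Finally for (4), let $e$ be a relevant edge contained in stars $S_1,S_2$ with common bisector $f$; the set $(T\setminus\{e\})\cup\{f\}$ is $(k+1)$-crossing-free because any $(k+1)$-crossing involving $f$ would have to live inside $S_1\cup S_2$ and would necessarily use $e$. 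The uniqueness of the flip guaranteed by Proposition~\ref{prop:flips} then forces $f$ to be the flipped edge.
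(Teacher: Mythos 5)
The paper gives no proof of this proposition; each part is cited directly from Pilaud--Santos \cite{PilSan}. So your attempt has to be judged as a reconstruction, and measured against the actual arguments in that reference.

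Your overall organization is sound, and the double count for part~(1) is clean: with part~(3) in hand, counting (star, edge) incidences gives $(2k+1)s = 2k(n-2k-1) + n = (2k+1)(n-2k)$, so $s = n-2k$. That step is correct.

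The problems are in the parts you acknowledge only as sketches. The extension lemma and the closure of two half-stars into a single $k$-star is exactly where the real work lies in \cite{PilSan} (Theorems~3.5 and~4.1 and the surrounding machinery), and you explicitly flag it as ``the main technical obstacle'' without resolving it; that is not a proof, it is an announcement of the hard step. For part~(2), ``existence comes from parity of the interleaving pattern'' and ``uniqueness from [a $(k+1)$-crossing]'' are not arguments --- the case analysis behind Theorem~3.5 of \cite{PilSan} is genuinely delicate, and the $(k+1)$-crossing argument for uniqueness is not obviously correct (a bisector of a star plus the star is not $(k+1)$-crossing-free, but that observation applies to one star and one bisector, not directly to two competing common bisectors of two stars).

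The clearest gap is in part~(4). You assert that any $(k+1)$-crossing in $(T\setminus\{e\})\cup\{f\}$ involving $f$ ``would have to live inside $S_1\cup S_2$ and would necessarily use $e$.'' The first half of this is unjustified and almost certainly false in general: there is no reason a $(k+1)$-crossing through $f$ should consist only of edges of the two stars through $e$; it can use arbitrary edges of $T$. What one actually needs is the weaker, but still nontrivial, statement that every $(k+1)$-crossing of $T\cup\{f\}$ contains $e$. That requires a genuine structural argument relating bisectors, angles of the two stars, and crossings (this is Lemma~5.1 of \cite{PilSan}). Until the extension lemma is proved and the part~(4) argument is replaced by a correct one, the proposal is a plausible plan rather than a proof.
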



In our next result we ask the following question. Suppose that $F$ is a face in $\OvAss{k}{n}$. That is, $F$ is contained in some $k$-triangulation $T$. How big can the link of $F$ be? 
By ``how big'' we here mean how many vertices (of $\OvAss{k}{n}$, that is, diagonals of the $n$-gon) are used in the link.

\begin{lemma}
\label{lemma:link}
Let $T \in \OvAss{k}{n}$ be a $k$-triangulation and $F\in T$ an edge of it. Let $S$ be the set of $k$-stars of $T$ containing a diagonal in $T\setminus F$. Then, all diagonals used in 
$\link_{\OvAss{k}{n}}(F)$ are either from $T\setminus F$ or common bisectors of two of the $k$-stars in $S$.
\end{lemma}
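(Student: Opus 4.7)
The plan is a case analysis on whether the diagonal $d$ used in $\link_{\OvAss{k}{n}}(F)$ belongs to $T$. If $d\in T$, then since $d\notin F$ by definition of the link, one immediately gets $d\in T\setminus F$, which matches the first alternative in the statement.

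The interesting case is $d\notin T$, and the goal is to exhibit a pair of stars of $T$ whose common bisector is $d$ and which both lie in $S$. Since $T$ is a maximal $(k+1)$-crossing-free set, $T\cup\{d\}$ must contain a $(k+1)$-crossing, and this crossing uses $d$; hence there are $k$ edges of $T$ that pairwise cross and all cross $d$. The key structural input is that $d$ is the common bisector of a pair $(R_1,R_2)$ of $k$-stars of $T$, and that, for each $i$, $d$ together with $k$ specific consecutive edges of $R_i$ forms a $(k+1)$-crossing. This follows from Proposition~\ref{prop:stars}, together with the numerical identity
\[
\binom{n}{2}-nk-k(n-2k-1)=\binom{n-2k}{2},
\]
which matches the number of non-$T$ relevant diagonals with the number of pairs of $k$-stars, and the observation that a common bisector can never lie in $T$ (otherwise one would get a forbidden $(k+1)$-crossing in $T$).

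With $R_1,R_2$ in hand the conclusion is short: since $d$ bisects $R_1$, there are $k$ consecutive edges of $R_1$ which together with $d$ form a $(k+1)$-crossing, and the hypothesis that $F\cup\{d\}$ is $(k+1)$-crossing-free forces at least one of those $k$ edges of $R_1$ to lie outside $F$, hence in $T\setminus F$, proving $R_1\in S$; the same argument yields $R_2\in S$. The main obstacle is justifying the structural claim cleanly. The counting identity above is the route to try first; a fallback is to use the flip description in Proposition~\ref{prop:stars}(4) together with connectivity of the flip graph to produce $R_1,R_2$ directly from the edges of $T$ that are crossed by $d$.
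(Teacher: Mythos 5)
Your route is genuinely different from the paper's. The paper argues via connectivity of the flip graph: it observes that every diagonal appearing in the link is obtainable from $T$ by flips that avoid $F$, and it controls the set of possible diagonals by a topological invariant — at each vertex $i$ it looks at the union of the angular sectors of the stars in $S$, shows this union is unchanged by the relevant flips, and reads off that every possible diagonal is a bisector of (or common edge of) a pair of stars of $T$. Your proof instead goes directly: split on $d\in T$ versus $d\notin T$, and in the second case produce the two stars from the bisector structure and then show they lie in $S$ by a $(k+1)$-crossing argument. The second half of your argument is clean and correct: once you know $d$ bisects a star $R_i$ of $T$, there is a $(k+1)$-crossing inside $R_i\cup\{d\}$ using $d$ and $k$ edges of $R_i$; if all those $k$ edges were in $F$, then $F\cup\{d\}$ would not be $(k+1)$-crossing-free, contradiction; hence $R_i$ contains an edge of $T\setminus F$ and $R_i\in S$.

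The gap is precisely where you flag it: the structural input that every relevant diagonal $d\notin T$ is a common bisector of two $k$-stars of $T$. Proposition~\ref{prop:stars}(2) as quoted only goes one direction (each pair of stars has a unique common bisector); it does not say that distinct pairs have distinct common bisectors, nor that the map hits every non-$T$ relevant diagonal. Your counting identity $\binom{n}{2}-nk-k(n-2k-1)=\binom{n-2k}{2}$ together with the observation that common bisectors never lie in $T$ shows the domain and target have equal cardinality, but cardinality alone does not produce a bijection — you still need injectivity or surjectivity of the pair-to-bisector map. That fact is true (it is one of the main structural results of \cite{PilSan}, where the common-bisector map is shown to be a bijection between pairs of stars of $T$ and relevant diagonals outside $T$), so the fix is to cite that result directly rather than to argue by counting. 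As is, the argument by counting is circular, and your fallback of reconstructing $R_1,R_2$ from flips and connectivity would essentially reprove the paper's argument. What your approach buys, once the citation is in place, is a shorter and more transparent proof that also isolates exactly which two stars are the relevant ones, rather than the paper's somewhat implicit "union of angular sectors" bookkeeping.
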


\begin{proof}
Facets in the link correspond to $k$-triangulations containing $F$. As the flip graph is connected \cite[Corollary 5.4]{PilSan} they can all be obtained from $T$ by iteratively flipping diagonals not belonging to $F$. The diagonals so obtained are bisectors perhaps not of the original stars in $T$ but at least of new stars obtained along the way. However, we can give the following characterization of them: At each vertex $i$ of the $n$-gon, consider (locally) the union of the angles of stars in $S$, seeing each angle as a sector of a small disk centered at $i$. The bisectors of such a union with an endpoint at $i$ are either bisectors (at $i$) of one of the stars, or common edges of two stars. Now, this ``union of stars'' is unchanged by flipping, because each flip removes two stars and inserts another two but with the same union.

Thus, which possible diagonals are used  can be prescribed by looking only at $T$. They either are bisectors of pairs of stars in $T$ or common edges of pairs of stars in $T$. Among the latter we are only allowed to flip, or insert, those that are not in $F$.
\end{proof}

\begin{corollary}
\label{coro:cycles}
All links of dimension one in $\OvAss{k}{n}$ are cycles of length at most five.
\end{corollary}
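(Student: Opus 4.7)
The plan is to bound the number of vertices of the link cycle directly, using Lemma~\ref{lemma:link} combined with a case analysis on the number of stars involved. I would fix any $k$-triangulation $T$ containing $F$, so that $T\setminus F=\{e_1,e_2\}$ consists of two relevant diagonals. By Theorem~\ref{thm:sphere} the link of $F$ in $\OvAss{k}{n}$ is a $1$-sphere, hence a cycle, and its length equals the number of its vertices, i.e.\ the number of relevant diagonals that occur in some $k$-triangulation containing $F$. By Proposition~\ref{prop:stars}(3), each $e_i$ belongs to exactly two $k$-stars of $T$, so the set $S$ of stars of $T$ containing $e_1$ or $e_2$ satisfies $|S|\in\{2,3,4\}$. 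Since each pair of stars in $S$ has a unique common bisector by Proposition~\ref{prop:stars}(2), Lemma~\ref{lemma:link} then bounds the cycle length by $2+\binom{|S|}{2}$.

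When $|S|\le 3$ this already gives the desired bound $2+3=5$, and there is nothing more to do. The substantive case is $|S|=4$, where the naive count only gives $2+6=8$. Here I would denote the stars as $\{A,B\}$ (those of $T$ containing $e_1$) and $\{C,D\}$ (those containing $e_2$), all four distinct. The key observation is that the flips of $e_1$ and $e_2$ are independent: since $C$ and $D$ do not contain $e_1$, they remain unchanged under a flip of $e_1$, and in particular their unique common bisector $f_2^*$, obtained by flipping $e_2$, is the same before and after flipping $e_1$; symmetrically for $e_2$. Consequently, the only $k$-triangulations containing $F$ are the four obtained from $T$ by flipping neither, $e_1$, $e_2$, or both---namely $T$, $T\triangle\{e_1,f_1^*\}$, $T\triangle\{e_2,f_2^*\}$, and $T\triangle\{e_1,f_1^*\}\triangle\{e_2,f_2^*\}$---giving a $4$-cycle.

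The hard part is making this commutativity-of-flips argument in the $|S|=4$ case watertight, i.e.\ verifying that no $k$-triangulation containing $F$ lies outside the list of four above. This amounts to checking that along the entire walk around the link, one active diagonal always lies in the pair of stars obtained from $\{A,B\}$ (by flipping $e_1$ at most once) and the other in the pair obtained from $\{C,D\}$, so that the two flip orbits stay disjoint and the walk closes after exactly four flips. Once this is established, combining with the straightforward cases $|S|\le 3$ yields cycle length at most $5$ in every situation.
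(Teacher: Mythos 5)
Your argument is correct and takes essentially the same approach as the paper: bound the cycle length via Lemma~\ref{lemma:link} by $2+\binom{|S|}{2}$ when $|S|\le 3$, and use commutativity of the two flips when $|S|=4$. The step you flag as not yet watertight is in fact already settled by what you wrote: once the flips commute, the closed walk $T$, $T\triangle\{e_1,f_1^*\}$, $T\triangle\{e_1,f_1^*\}\triangle\{e_2,f_2^*\}$, $T\triangle\{e_2,f_2^*\}$, $T$ visits four distinct vertices of the link with each vertex's two cycle-neighbors accounted for, and since the link is already known to be a single cycle, this closed walk must be the whole cycle; there is nothing more to verify.
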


\begin{proof}
Every such link is a sphere of dimension one, hence a cycle.

Let $F$ be the face we are looking at, so that there are relevant diagonals $\{e,f\}\in \bnn\setminus F$ and a $k$-triangulation $T$ with 
$F=T\setminus\{e,f\}$.
The set $S$ of stars in the previous lemma has size at most four (two for $e$ and two for $f$) but it may have size three or two if $e$ and $f$ belong to one or two common stars. By the lemma, if $|S|$ is two or three then the length of the cycle is (at most) three or five, respectively. 

If $|S|=4$, then each of the flips leaves the two stars corresponding to the other flip untouched. Hence, the two flips commute and the cycle is a quadrilateral, consisting of $T$, its two neighbors by the flip at $e$ or $f$, and the $k$-triangulation obtained by performing both flips, in any order.
\end{proof}

\begin{example}
	The three graphs below represent codimension-two faces of $\OvAss{2}{7}$. 
	In each drawing, blue edges are relevant and two more relevant edges are needed to form a $2$-triangulation.
	The link of the first face is
	a cycle of length 3, consisting of the diagonals $\{15,26,47\}$. In the second, 
	the length is 4: $\{15,25,36,47\}$. In the third
	the length is 5: $\{25,26,36,37,47\}$. 
	(In each case, adding to the given graph any two consecutive edges from the list we get a $2$-triangulation, and all $2$-triangulations containing that graph have this form.)\\
	\centerline{\includegraphics[scale=0.6]{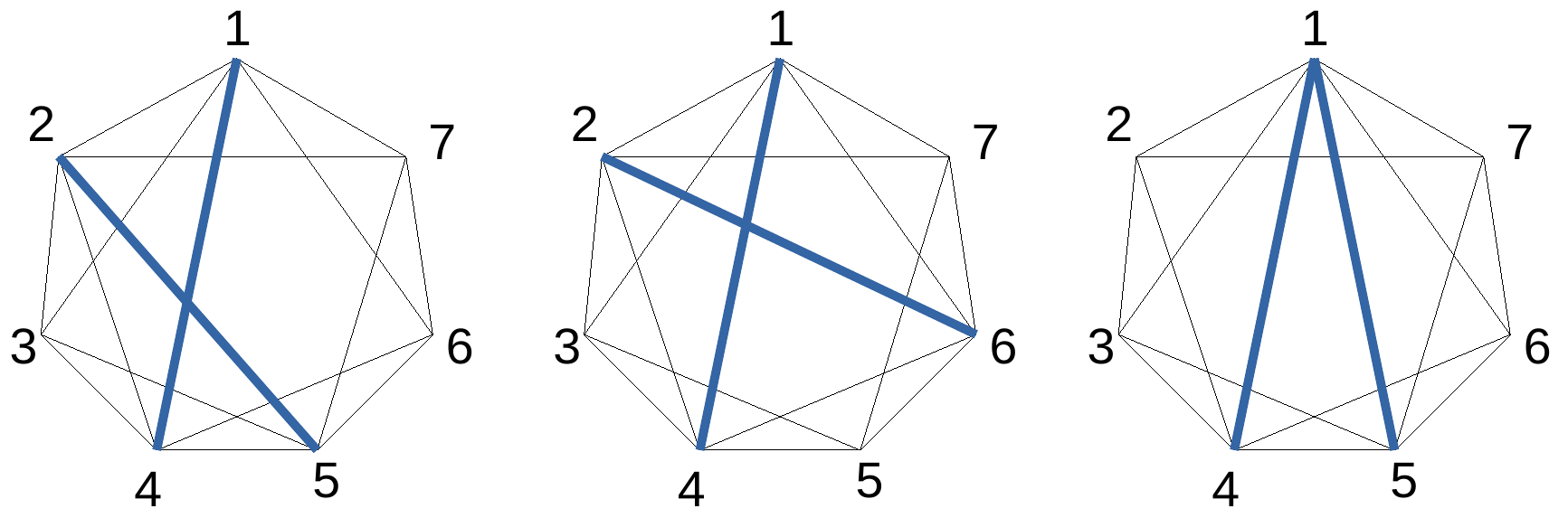}}
	\label{ex:flips}
\end{example}

%
%

\subsection{Polytopality}
\label{sec:polytopal}

Throughout this section $\Delta$ will denote a pure simplicial sphere of dimension $D-1$ with vertex set $V$. We ask ourselves whether $\Delta$ can be realized as the normal fan of a polytope. That is, we ask whether there is a vector configuration $\VV=\{v_i: i\in V\}\subset \R^D$ with the  property that the family of cones 
\[
\{\cone(\VV_F): F\in \Delta\}
\]
form a complete simplicial fan and whether this fan is the normal fan of a polytope. Here we denote
\[
\VV_X:=\{ v_i : i \in X\}
\]
for each $X\subset V$  and 
\[
\cone(X):=\{\lambda_1x_1+\dots+\lambda_s x_s: s\in \N, \lambda_i \in [0,\infty),x_i\in \VV_X\}
\]
is the \emph{cone} generated by $X$. 

The first obvious necessary condition is that we need $\VV_F$ to be linearly independent for each $F\in \Delta$. 
When this happens the cones $\cone(\VV_F)$ are \emph{simplicial cones} and $\VV$ naturally defines a continuous map $\phi_{\Delta,\VV}: |\Delta| \to S^{D-1}$ where $|\Delta|$ denotes the topological realization of $\Delta$ and $S^{D-1}\subset \R^D$ is the unit sphere. The precise definition of $\phi_{\Delta,\VV}$ is as follows: First map $|\Delta|$ to $\R^D$ by sending each vertex $i$ to its corresponding vector $v_i$, and extend this map to $|\Delta|$ linearly within each $F\in \Delta$. The fact that each $F$ is linearly independent ensures that the image of this map does not contain the origin, so we can compose the map with the normalization map $\R^D \to S^{D-1}$ which divides each vector by its $L_2$-norm. 
We call this map $\phi_{\Delta,\VV}$ a \emph{pre-embedding} of $\Delta$. Slightly abusing notation we will also say that \emph{$\VV$ is a pre-embedding of $\Delta$}.

If the pre-embedding happens to be injective, then it is a continuous injective map between two spheres of the same dimension, hence bijective, hence a homeomorphism. This implies that $\Delta$ is a \emph{triangulation of $\VV$} in the sense of \cite{triangbook} (see, e.g., Theorem 4.5.20 in that book). We can also say that, in this case, $\VV$ is an \emph{embedding} of $\Delta$ or that it \emph{realizes $\Delta$ as a complete fan}.

\subsubsection{Conditions for a complete fan}

Remember that the \emph{contraction} of a vector configuration $\VV\subset \R^D$ at an independent subset $I$ is the image of $\VV\setminus I$ under the quotient linear map $\R^D \to \R^D/ \lin(I) \cong \R^{D-|I|}$; it is denoted $\VV/I$.
If $\VV$ is a pre-embedding (resp. an embedding) of $\Delta$ then $\VV/F$ is also a pre-embedding (resp. an embedding) of $\link_\Delta(F)$  for every $F\in \Delta$. We can then consider a hierarchy of embedding properties by asking $\VV/F$ to be an embedding only for faces of at least a certain dimension. The case where $F$ is a facet is trivial. The next level in the hierarchy, when $F$ is a ridge, has received some attention in \cite{triangbook}:

\begin{definition}[ICoP property]
Let $\VV\subset \R^D$ be a pre-embedding of a pure $(D-1)$-complex $\Delta$. We say that the pre-embedding has the the \emph{intersection cocircuit property (ICoP)} if the pre-embedding $\VV_\tau$ of $\link_\Delta(\tau)$ is an embedding for every ridge $\tau$. That is to say, if the following two properties hold:

\begin{itemize}
\item $\tau$ is contained in exactly two facets $\sigma_1$ and $\sigma_2$.

\item The cones $\VV_{\sigma_1}$ and $\VV_{\sigma_2}$ lie in opposite sides of the hyperplane spanned by $\VV_{\tau}$ (which exists and is unique since $\tau$ is independent of size $D-1$). This is equivalent to saying that the unique (modulo a scalar multiple) linear dependence in  $\VV_{\sigma_1\cup \sigma_2}$ has coefficients of the same sign in the two vectors indexed by $\sigma_1\setminus \tau$ and $\sigma_2\setminus \tau$.
\end{itemize}

\end{definition}

Observe that the first condition is independent of $\VV$. When it holds, $\Delta$ is said to be a \emph{pseudo-manifold}.
The pseudo-manifold is \emph{strongly connected} if its dual graph is connected.\footnote{Sometimes strong-connectedness is considered part of the definition of pseudo-manifold, but we do not take this approach.}

Every link in a pseudo-manifold is itself a pseudo-manifold. For example, the link of a codimension-two face $\rho$ is a disjoint union of cycles. We say that $\rho$ is \emph{nonsingular} if $\link_{\Delta}(\rho)$ is a single cycle and in this case we call this cycle the \emph{elementary cycle with center at $\rho$}.

When this happens for every $\rho$ we say that the pseudo-manifold $\Delta$ \emph{has no singularities of codimension two}.
Being a pseudo-manifold with no singularities of codimension two is computationally easy to check: In a pure complex the link of every codimension-two face is a graph, and we only need to check that each of these graphs is a cycle. All manifolds, hence all spheres, hence $\OvAss{k}{n}$ have these properties.

As seen in \cite[Theorem 4.5.20]{triangbook}, the (ICoP) property is almost sufficient for $\Delta$ to be a triangulation of $\VV$, but something else is needed. 
We here express this  ``something else'' in topological terms, in two ways.

Our first characterization is in terms of links of codimension-two faces.
Suppose that $\Delta$ has no singularities of codimension two, so that every face $\rho$ of codimension two defines an elementary cycle. Then, we have that $\VV/\rho$ embeds  $\link_\Delta(\rho)$ as a cyclic collection of cones in $\R^2$, for which we can define its \emph{winding number}: the number of times the cycle wraps around $\R^2\setminus \{0\}$. Homologically, this number is the image in $H_1(\R^2\setminus \{0\},\Z)\cong \Z$ of the elementary cycle as a generator of its homology group $H_1(\link_\Delta(\rho),\Z)\cong \Z$. 
We say that an elementary cycle is \emph{simple} in $\VV$ if its winding number is $\pm1$.

The second characterization is in terms of the degree of the pre-embedding, which is a generalization of winding number to higher dimensions. 
The \emph{degree} of a continuous map $\phi:|\Delta|\to S^{D-1}$ from an orientable $(D-1)$-dimensional pseudo-manifold $\Delta$ to the sphere $S^{D-1}$ can be defined as the image of the fundamental cycle of $H_{D-1}(M, \Z)\cong \Z$ in $H_{D-1}(S^{D-1}, \Z)\cong \Z$. If $\phi$ is injective in each facet
(for example, if it is a pre-embedding as defined above), the degree of $\phi$ can be computed as the number (with sign) of preimages in $\phi^{-1}(y)$ for a sufficiently generic point $y\in S^{D-1}$; ``with sign'' means that each preimage $x\in \phi^{-1}(y)$ counts as $+1$ or $-1$ depending on whether $\phi$ preserves or reverses orientation in the facet containing $x$.

Observe that being a pre-embedding with the (ICoP) property implies $\Delta$ to be an orientable pseudo-manifold.

\begin{theorem}
\label{thm:winding}
\label{thm:signcond}
Let $\VV\subset \R^D$ be a pre-embedding of $\Delta$ with the (ICoP) property. Let $\phi_{\Delta,\VV}: |\Delta| \to S^{D-1}$ be the associated map. Then, the following conditions are equivalent:
\begin{enumerate}
\item $\phi_{\Delta,\VV}$ is a homeomorphism; that is, $\VV$ is a triangulation of $\Delta$ or, equivalently, $\VV$ embeds $\Delta$ as a complete simplicial fan in $\RR^D$.
\item Every sufficiently generic vector $v\in \R^D$  is contained in only one of the facet cones $\{\cone(\VV_\sigma) : \sigma\in \operatorname{facets(\Delta)}\}$.
\item There is some vector $v\in \R^D$ that is contained in only one of the facet cones $\{\cone(\VV_\sigma) : \sigma\in \operatorname{facets(\Delta)}\}$.
\item  $\Delta$ has no singularities of codimension two and all its elementary cycles are simple in $\VV$.
\item $\phi_{\Delta,\VV}$ has degree $\pm 1$.
\end{enumerate}
\end{theorem}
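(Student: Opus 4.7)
The strategy is to establish the cycle of implications (1)$\Rightarrow$(2)$\Rightarrow$(3)$\Rightarrow$(5)$\Rightarrow$(4)$\Rightarrow$(1). The first two implications are immediate from the definitions. The whole argument is piecewise-linear degree theory: ICoP will provide a coherent orientation of $\Delta$ under which $\phi_{\Delta,\VV}$ is orientation-preserving on every facet interior, and the rest follows from the standard identification of $\deg\phi_{\Delta,\VV}$ with the signed count of preimages of a regular value.

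First I orient $\Delta$ by declaring each facet $\sigma$ positive with the sign of the determinant of some ordered basis of $\VV_\sigma$. ICoP is precisely the statement that any two facets sharing a ridge induce opposite orientations on that ridge, so $\Delta$ becomes a coherently oriented pseudo-manifold and $\phi_{\Delta,\VV}$ is orientation-preserving in the interior of each facet. Consequently every preimage of a regular value $v\in S^{D-1}$ contributes $+1$ to $\deg(\phi_{\Delta,\VV})$, yielding the key identity
\[
\deg(\phi_{\Delta,\VV}) \;=\; \#\bigl\{\sigma\in\operatorname{facets}(\Delta) : v\in\cone(\VV_\sigma)\bigr\},
\]
valid for every regular value $v$. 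Since the left-hand side is independent of $v$, so is the right-hand side; this gives at once the equivalences (2)$\Leftrightarrow$(3)$\Leftrightarrow$(5), with the sign $\pm 1$ in (5) only reflecting the choice of a global orientation. Applying the same identity to the quotient pre-embedding $\VV/\rho\colon\link_\Delta(\rho)\to S^1$ at a codimension-two face $\rho$, and using that the degree of this $1$-dimensional map equals the winding number of a small circle in $S^{D-1}$ transverse to $\phi_{\Delta,\VV}(\rho)$, yields (5)$\Rightarrow$(4): if the global degree is $\pm 1$, every codimension-two link must itself be a single cycle winding exactly once.

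The core of the proof is (4)$\Rightarrow$(1). I plan to show by induction on the codimension $j$ of a face $F\in\Delta$ that under (4), the quotient pre-embedding $\VV/F$ realizes $\link_\Delta(F)$ as a complete simplicial fan. The base cases $j=1$ and $j=2$ are ICoP and (4), respectively; for $j\geq 3$, $\link_\Delta(F)$ is a pseudo-manifold sphere, and its own ICoP and simple-cycle conditions are inherited from those of $\Delta$ (a ridge of $\link_\Delta(F)$ corresponds to a codim-$j$ face of $\Delta$ containing $F$, and similarly for codim-two faces of the link), so the induction applies. Because the pre-embedding on a neighborhood of a relative-interior point of $F$ factors as the join of the cone over $\VV_F$ with the quotient pre-embedding of $\link_\Delta(F)$, this inductive statement implies that $\phi_{\Delta,\VV}$ is a local homeomorphism at every point of $|\Delta|$. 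Hence $\phi_{\Delta,\VV}$ is a covering of the connected manifold $S^{D-1}$ by the compact orientable manifold $|\Delta|$, and its number of sheets equals $|\deg(\phi_{\Delta,\VV})|$, which is $1$ by the already-proved (5). A single-sheeted covering is a homeomorphism, so (1) holds. The main obstacle I anticipate is the bookkeeping of this induction: verifying that ICoP and the simple-cycle condition pass to all higher-codimensional links, and that the join factorization of the pre-embedding near a face glues cleanly into a global local homeomorphism on $|\Delta|$.
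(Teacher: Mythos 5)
Your global framework (degree theory for PL maps, covering-space argument) is the same as the paper's, but you rearrange the implications into a single cycle $(1)\Rightarrow(2)\Rightarrow(3)\Rightarrow(5)\Rightarrow(4)\Rightarrow(1)$ while the paper shows $(1)\Rightarrow(2)\Rightarrow(3)$, $(1)\Rightarrow(4)$, $(1)\Rightarrow(5)$, $(3)\Rightarrow(1)$ (by citation), $(5)\Rightarrow(3)$, and $(4)\Rightarrow(1)$.

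Two concrete issues with your chain.

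First, your step $(4)\Rightarrow(1)$ is circular as written: after establishing that $\phi_{\Delta,\VV}$ is a covering of $S^{D-1}$, you compute the number of sheets from ``the already-proved $(5)$''. But in your cycle, $(5)$ has only been derived \emph{from} $(3)$, and assuming $(4)$ alone gives you neither $(3)$ nor $(5)$; only after the loop is closed does $(4)\Rightarrow(5)$ become available. The fix is what the paper does: for $D\ge 3$, $S^{D-1}$ is simply connected, so a covering by the connected space $|\Delta|$ is automatically a homeomorphism, with no degree count needed (the cases $D\le 2$ are handled separately and trivially). Your induction scheme itself — proving that $\VV/F$ realizes $\link_\Delta(F)$ as a fan, by increasing codimension — is a valid repackaging of the paper's induction on $D$, since the links of codimension-$j$ faces are $(j-1)$-dimensional complexes inheriting ICoP and the simple-cycle condition, but it is essentially the same argument with extra bookkeeping rather than a simplification.

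Second, your step $(5)\Rightarrow(4)$ is stated but not actually argued. You assert that the degree of the quotient map on $\link_\Delta(\rho)$ equals a winding number and then jump to ``global degree $\pm1$ forces every codim-two link to be a simple cycle''. To make this rigorous you need two facts that you use implicitly: (a) ICoP forces each cycle of $\link_\Delta(\rho)$ to have winding number at least $1$ in absolute value (all turns are strictly positive, as no angle equals $\pi$), and (b) since ICoP forbids sign cancellation in fibers, the number of preimages of a generic $y$ close to $\phi(\text{relint}\,|\rho|)$ coming from the star of $\rho$ equals the sum of the winding numbers of the cycles in $\link_\Delta(\rho)$, which therefore cannot exceed the global degree. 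Combining (a) and (b) gives exactly one cycle winding exactly once. The paper sidesteps all of this by obtaining $(4)$ as an immediate consequence of $(1)$, which is clean and avoids the local-to-global degree comparison entirely.

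So: the degree computation giving $(2)\Leftrightarrow(3)\Leftrightarrow(5)$ and the covering-space idea for $(4)\Rightarrow(1)$ are sound and match the paper, but as submitted your proof has a genuine logical gap (the circular appeal to $(5)$) and a hand-wave (the local-global degree comparison in $(5)\Rightarrow(4)$). Both are repairable, and the repairs essentially reproduce the paper's cleaner implication graph.
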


\begin{proof}
We only need to show that any of (4) and (5) implies one of (1), (2) or (3), since
the implications (1)$\Rightarrow$(2)$\Rightarrow$(3), (1)$\Rightarrow$(4) and  (1)$\Rightarrow$(5) are obvious and
 (3)$\Rightarrow$(1) is part of \cite[Corollary 4.5.20]{triangbook}.

Let us see the implication (5)$\Rightarrow$(3).
The property (ICoP) implies that the map $\phi_{\Delta,\VV}$ is consistent with orientations: either it preserves orientations of all facets or reverses orientation of all facets. This implies that when we compute the degree via a generic fiber there are no cancellations and, since the map has degree one, every generic fiber has a single point. That is, $\phi_{\Delta,\VV}$ is injective except perhaps in a subset of measure zero (the $(D-2)$-skeleton of $|\Delta|$), so (3) holds.

For the implication (4)$\Rightarrow$(1) we use induction on $D$.
If $D\le 2$ there is nothing to prove, so we assume $D\ge 3$. 
Since elementary cycles are preserved by taking links/contractions, the inductive hypothesis implies that $\link_\Delta(i)$ is a triangulation of $\VV/v_i$ for every $i\in V$. 
In particular, $\link_\Delta(i)$ is topologically a sphere and, hence, $\Delta$ is a manifold. 
Moreover, again by the inductive hypothesis, the map $\phi_{\Delta,\VV}: |\Delta|\to S^{D-1}$ is a local homeomorphism. 
Every local homeomorphism between two manifolds is a covering map. Now, $D-1\ge 2$ implies that $S^{D-1}$ is simply connected and, since $|\Delta|$ is connected, the  covering map $\phi_{\Delta,\VV}$ is a global homeomorphism. 
\end{proof}

Now, by Corollary~\ref{coro:cycles},  elementary cycles in $\OvAss{k}{n}$ have length bounded by five. 
This  suggests we study Theorem~\ref{thm:winding} in more detail for such cycles:

\begin{proposition}
\label{prop:shortcycles}
Let $\VV$ be a pre-embedding of $\Delta$ with the (ICoP) property. 
Then:
\begin{enumerate}
\item All cycles of length $\le 4$ are automatically simple.
\item Let $\rho$ be a codimension-two face whose elementary cycle $Z$ has length five. Let $i_1,\dots,i_5\in V$ be the vertices of $Z$, in their cyclic order. Then, $Z$ is simple if and only if there are three 
consecutive elements $i_1, i_2, i_3\in Z$ such that the unique linear dependence among the vectors $\{v_{i}: i\in \rho\cup\{i_1, i_2, i_3\}\}$ has opposite sign in $i_2$ than the sign it takes in $i_1$ and $i_3$.
\end{enumerate}
\end{proposition}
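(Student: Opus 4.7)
The plan is to reduce both parts to two-dimensional angular arithmetic by contracting at $\rho$. Since $\rho$ is independent of codimension two, the quotient configuration $\VV/\rho$ lives in $\R^2$, and the image of the elementary cycle $Z$ under the induced pre-embedding is a closed polygonal loop in $\R^2\setminus\{0\}$ whose winding number equals the winding number of $Z$ under $\phi_{\Delta,\VV}$. Write $\bar v_j$ for the image of $v_{i_j}$ in $\R^2$ and $\theta_j\in\R/2\pi\Z$ for its angle; denote by $\alpha_{j+1}\in(-\pi,\pi)$ the signed short-arc angle from $\theta_j$ to $\theta_{j+1}$. Since each edge $\{i_j,i_{j+1}\}$ of $Z$ is mapped by $\phi$ onto the short arc from $\bar v_j$ to $\bar v_{j+1}$, the winding number is $W=(\alpha_1+\cdots+\alpha_L)/(2\pi)$.

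The first step is to translate the ICoP hypothesis at each ridge $\rho\cup\{i_j\}$ into angular terms: it asserts that $\bar v_{j-1}$ and $\bar v_{j+1}$ lie in opposite open half-planes bounded by the line through $\bar v_j$, i.e.\ in the angle intervals $(\theta_j,\theta_j+\pi)$ and $(\theta_j-\pi,\theta_j)$ mod $2\pi$. A direct check unpacks this to: $\alpha_j$ and $\alpha_{j+1}$ have the same sign. Propagating cyclically, all $\alpha_j$ share a common sign, which we may take to be positive; in particular $\alpha_j\in(0,\pi)$ and $W$ is a positive integer.

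Part~(1) follows immediately: for $L\le 4$ the strict bounds give $\sum_j\alpha_j<4\pi$, so $W<2$ and therefore $W=1$, proving simplicity. For part~(2), with $L=5$, we must decide between $W=1$ and $W=2$. The crucial identity is $\sum_{j=1}^{5}(\alpha_j+\alpha_{j+1})=2\sum_j\alpha_j=4\pi W$. If $W=1$, the average of the five consecutive pair sums is $4\pi/5<\pi$, so at least one pair satisfies $\alpha_{j+1}+\alpha_{j+2}<\pi$. If instead $W=2$, the bound $\alpha_k<\pi$ forces $\alpha_{j+1}+\alpha_{j+2}=4\pi-\sum_{k\ne j+1,j+2}\alpha_k>4\pi-3\pi=\pi$ for every $j$, so no such pair exists.

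The last step identifies $\alpha_{j+1}+\alpha_{j+2}<\pi$ with the stated sign pattern. That inequality says exactly that $\bar v_{i_{j+1}}$ lies in the interior of the short arc from $\bar v_{i_j}$ to $\bar v_{i_{j+2}}$, equivalently in the open cone $\cone(\bar v_{i_j},\bar v_{i_{j+2}})$; this in turn is equivalent to the unique (up to scalar) dependence among those three planar vectors having the coefficient of $\bar v_{i_{j+1}}$ of opposite sign from the other two. Lifting through the contraction, the same sign pattern holds for the unique dependence among $\{v_i : i\in\rho\cup\{i_j,i_{j+1},i_{j+2}\}\}$, which is the proposition's phrasing. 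The main point requiring care is the bookkeeping connecting ``same sign of consecutive $\alpha$'s under ICoP'' with ``short versus long arc'' and ``sign pattern of the three-term dependence''; once this is set up cleanly both parts are immediate.
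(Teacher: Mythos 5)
Your proof is correct and follows essentially the same route as the paper's: contract at $\rho$ to land in $\R^2$, use ICoP to show that all signed angle increments share one sign (hence lie in $(0,\pi)$), and then count. The identification of "two consecutive angles summing to less than $\pi$" with the sign pattern of the three-term dependence is also exactly the paper's. Your write-up is a touch more explicit on two small points that the paper leaves to the reader: the pigeonhole/average argument showing that $W=1$ forces some consecutive pair to sum to less than $4\pi/5<\pi$, and the observation that $W\le 2$ follows from $\sum\alpha_j<5\pi$; the paper instead argues the contrapositive ("if no pair is small, the sum exceeds $2\pi$, hence is $\ge 4\pi$"). These are equivalent. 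No gaps.
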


\begin{proof}
Let us first explain the condition in part two. 
The (ICoP) property implies that for every three consecutive vertices $i_1, i_2, i_3$ in the elementary cycle (of arbitrary length) of a codimension-two face $\rho$ we have that $i_1$ and $i_3$ lie in opposite sides of the hyperplane spanned by $\rho\cup \{i_2\}$. 
By elementary linear algebra (or oriented matroid theory), this translates to the fact that the unique dependence contained in $\rho\cup\{i_1,i_2,i_3\}$ has the same sign in $i_1$ and $i_3$. 
Similarly, whether this sign equals the one at $i_2$ or not expresses whether $i_3$ lies on the same or different side of $\rho\cup\{i_1\}$ as $i_2$. 
Put differently, it tells us whether the dihedral angles of $i_1i_2$ and $i_2i_3$, as seen from $\rho$ add up to more or less than $\pi$.
(If the dependence vanishes at $i_2$ then the angle is exactly $\pi$).

In general, if $Z=i_1i_2\dots i_ni_1$ is a cycle with center $\rho$ and (after contraction of the vector configuration at $\rho$) it is embedded in $\R^2$ with vectors $w_1,\dots,w_n\in \R^2\setminus \{0\}$ for its generators, we can compute the winding number of $Z$ by adding the dihedral angles $w_i w_{i+1}$, taken with sign. This sum of angles is necessarily going to be a multiple $2\pi\alpha$ of $2\pi$, and the the winding number equals the integer $\alpha$.

Since each individual angle is, in absolute value, smaller than $\pi$, it is impossible to get a sum of at least $4\pi$ with four angles or less. With five angles it is possible, but not if two of them add up to less than $\pi$, as expressed by the condition in part (2). Conversely, if no three consecutive elements in $Z$ satisfy this condition, then the sum of any two consecutive angles in the cycle is at least $\pi$, the sum of four of them is at least $2\pi$, and the sum of the five of them is more than $2\pi$.
\end{proof}

Summing up, we have an easy way of checking whether a vector configuration embeds $\OvAss{k}{n}$ as a fan:

\begin{corollary}
\label{coro:fan}
Let $\VV=\{v_{ij}\}_{\{i,j\}\in \bnn} \subset \R^{k(2n-2k-1)}$ be a vector configuration. $\VV$ embeds $\OvAss{k}{n}$ as a complete fan in $\R^{k(n-2k-1)}$ if and only if it satisfies the following properties:
\begin{enumerate}
\item (Basis collection) For every facet ($k$-triangulation) $T$, the vectors $\{v_{ij}: \{i,j\}\in T\}$ are a linear basis. 
\item (ICoP) For every flip between two $k$-triangulations $T_1$ and $T_2$, the unique linear dependence among the vectors $\{v_{ij}: \{i,j\}\in T_1\cup T_2\}$ has the same sign in the two elements involved in the flip (the unique elements in $T_1\setminus T_2$ and $T_2\setminus T_1$).
\item (Elementary cycles of length $5$) Every elementary cycle of length five has three consecutive elements satisfying the sign condition in part (2) of Proposition~\ref{prop:shortcycles}.
\qedhere
\end{enumerate}
\qed
\end{corollary}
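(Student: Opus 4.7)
The plan is to derive this corollary as a clean assembly of the three preceding results: Theorem \ref{thm:sphere}, Theorem \ref{thm:winding}, Proposition \ref{prop:shortcycles}, and Corollary \ref{coro:cycles}. The setup is that $\OvAss{k}{n}$ is a sphere of dimension $k(n-2k-1)-1$, so $D=k(n-2k-1)$; but we actually want vectors in $\R^{k(2n-2k-1)}$ because the rigidity-matrix framework realizes the full complex $\Ass{k}{n}$, with the $kn$ short edges providing an ``irrelevant face'' that is joined to $\OvAss{k}{n}$. I would first observe that for any such configuration $\VV$, realizing $\Ass{k}{n}$ as a complete fan in $\R^{k(2n-2k-1)}$ is equivalent to realizing $\OvAss{k}{n}$ as a complete fan in the quotient $\R^{k(n-2k-1)}$ obtained by contracting the short edges. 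So without loss of generality we may apply Theorem \ref{thm:winding} directly to $\OvAss{k}{n}$ with $D = k(n-2k-1)$.

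Next I would check that the three bullet points in the statement correspond exactly to the hypotheses of Theorem \ref{thm:winding}. Condition (1) (basis collection) says that $\VV_T$ is linearly independent for every facet $T$, which is precisely what it means for $\VV$ to be a pre-embedding of $\OvAss{k}{n}$. Condition (2) is verbatim the (ICoP) property: for each ridge (codimension one face) $\tau=T_1\cap T_2$ there are exactly two facets $T_1,T_2$ containing it (this holds by Proposition \ref{prop:flips}, the flip statement), and the sign condition on the unique dependence among $\VV_{T_1\cup T_2}$ is exactly the requirement that the two facet-cones lie on opposite sides of the hyperplane spanned by $\VV_\tau$. So under (1) and (2), $\VV$ is a pre-embedding with (ICoP).

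Having these in place, by Theorem \ref{thm:winding}, realizing $\OvAss{k}{n}$ as a complete fan is equivalent to every elementary cycle being simple in $\VV$ (note that $\OvAss{k}{n}$, being a sphere by Theorem \ref{thm:sphere}, is a pseudo-manifold with no singularities of codimension two, so elementary cycles are well defined). Corollary \ref{coro:cycles} tells us that every elementary cycle of $\OvAss{k}{n}$ has length at most five. Part (1) of Proposition \ref{prop:shortcycles} then disposes of cycles of length $\le 4$ automatically under (ICoP). The remaining case, cycles of length five, is exactly covered by part (2) of Proposition \ref{prop:shortcycles}, whose sign condition is condition (3) of the corollary.

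No step is really a main obstacle here; the whole work is in the preceding results. The only point requiring mild care is the dimension/contraction issue in the first paragraph above, ensuring that one is free to work either with $\Ass{k}{n}$ in the ambient rigidity space or with $\OvAss{k}{n}$ in the quotient, since the short edges are shared by all facets and therefore trivially satisfy all three conditions. Once this is noted, the proof is simply a matter of quoting Theorem \ref{thm:winding}, Corollary \ref{coro:cycles} and Proposition \ref{prop:shortcycles} in sequence.
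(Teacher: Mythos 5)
Your proposal is correct and matches the paper's (implicit) argument: the paper states this corollary with no separate proof, signalling that it is precisely the assembly of Theorem~\ref{thm:winding}, Corollary~\ref{coro:cycles}, and Proposition~\ref{prop:shortcycles} you describe. Your explicit remark about passing from $\Ass{k}{n}$ in $\R^{k(2n-2k-1)}$ to $\OvAss{k}{n}$ in $\R^{k(n-2k-1)}$ by contracting the (common, hence automatically independent under condition~(1)) irrelevant and boundary edges is a helpful elaboration of a point the paper leaves tacit.
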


\begin{example}
	The pictures below illustrate part (3) of Corollary~\ref{coro:fan}.
	The left picture shows a flip (the union of two triangulations) belonging to the elementary cycle of the codimension-two face $\rho$ from the third picture of Example~\ref{ex:flips}.
	 In the centre picture, blue and red represent the signs of the coefficients in the circuit, that in this case is a $K_6$, for a generic vector configuration $\VV$ (see Section \ref{sec:cofactor} to understand why the signs have to be like this). The sign of 26 is opposite to 25 and 36, and this implies that, as two-dimensional vectors in $\VV/\rho\subset \R^2$, the vector 26 is a positive combination of 25 and 36, as in the right part of the figure. Thus, the angles in $(25,26)$ and $(26,36)$ add to less than $\pi$ and the cycle cannot wind twice around the origin.
	 
	 \centerline{\includegraphics[scale=0.7]{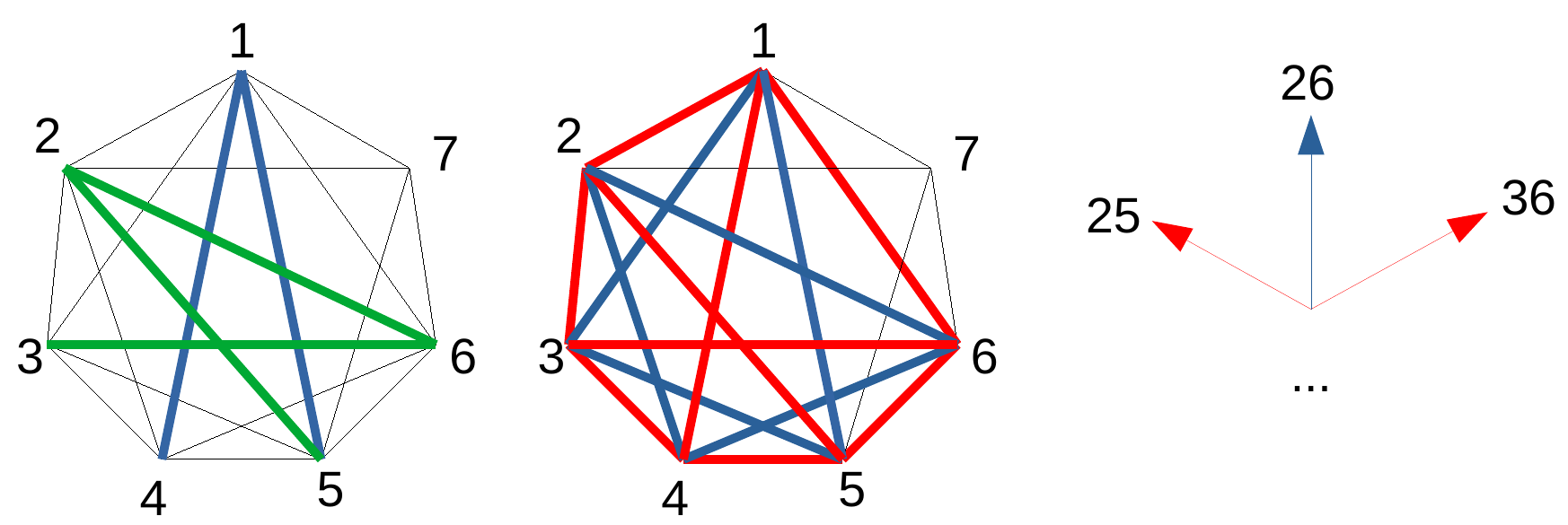}}
	\label{ex:signflip}
\end{example}

Observe that, computationally, what we need to do to apply the corollary is to check that the determinant corresponding to any $k$-triangulation is nonzero and to compute  (the signs of) the linear dependence corresponding to each flip (plus some book-keeping to identify which flips form an elementary cycle). We emphasize that only the signs are needed because computing signs may sometimes be easier than computing actual values.

\subsubsection{Conditions for polytopality}
\label{subsec:polytopal}

Once we have a collection of vectors $\VV=\{v_i: i \in v\} \subset \RR^D$ that embeds a simplicial complex $\Delta$ as a complete simplicial fan in $\R^D$, saying that the fan is the normal fan of a polytope is equivalent to saying the $\Delta$ is a \emph{regular triangulation} of $\VV$. (This is Theorem 9.5.6 in \cite{triangbook}).

Regular here means that there is a choice of lifting heights $f_i\in (0,\infty)^V$ for the vertices $i\in V$ of $\Delta$
such that $\Delta$ is the boundary complex of the cone in $\R^{D+1}$ generated by lifting the vectors $v_i\in \R^D$ to vectors $(v_i,f_i) \in \R^{D+1}$.
That is to say, we need that for every facet $\sigma \in \Delta$ the linear hyperplane containing the lift of $\sigma$ lies strictly below all the other lifted vectors.

We call such lifting vectors  $(f_i)_{i\in V}$ \emph{valid}. 
The following lemma is a version of \cite[Theorem 3.7]{RoSaSt}, which in turn is closely related to \cite[Proposition 5.2.6(i)]{triangbook}.

\begin{lemma}
\label{lemma:inequalities}
         Let $\Delta$ be a simplicial complex with vertex set $V$ and dimension $D-1$, and assume it is a 
         triangulation of a vector configuration $\VV\subset \R^D$ positively spanning $\R^D$. Then,
	a lifting vector $(f_{i})_{i\in V}$ is valid if and only if for every facet $\sigma\in \Delta$ and element $a\in V\setminus \sigma$ the inequality
	\begin{equation}\label{ineq}
		\sum_{i\in C}\omega_{i}(C)f_{i}>0
	\end{equation}
	holds, where $C=\sigma \cup\{a\}$ and $\omega(C)$ is the vector of coefficients in the unique (modulo multiplication by a positive scalar) linear dependence in $\VV$ with support in $C$, with signs chosen so that $\omega_{a}(C)>0$ for the extra element.
\end{lemma}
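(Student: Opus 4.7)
The plan is to establish the equivalence by converting the geometric ``strictly above'' condition defining validity into the algebraic inequality involving circuit coefficients. The argument is essentially linear algebra once the setup is fixed.

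First I would fix a facet $\sigma \in \Delta$ and observe that $\{v_i : i \in \sigma\}$ is a linear basis of $\R^D$ (it has cardinality $D$ since $\dim \Delta = D-1$, and it is independent because $\sigma$ is a simplex of the triangulation). Its lift $\{(v_i, f_i) : i \in \sigma\} \subset \R^{D+1}$ therefore spans a unique linear hyperplane $H_\sigma$, which is the graph of the linear function $g_\sigma : \R^D \to \R$ uniquely determined by $g_\sigma(v_i) = f_i$ for $i \in \sigma$. Because $\VV$ positively spans $\R^D$, the cone generated by the lifts is pointed, and we may unambiguously orient $\R^{D+1}$ so that ``above $H_\sigma$'' means the side containing large positive $f$-values. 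Validity at $\sigma$ is then the condition that, for every $a \in V \setminus \sigma$, the lifted vector $(v_a, f_a)$ lies strictly above $H_\sigma$, i.e., $f_a > g_\sigma(v_a)$.

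Next I would translate $g_\sigma(v_a)$ into circuit language. Let $C = \sigma \cup \{a\}$ and let $\omega(C)$ be the linear dependence among $\{v_i : i \in C\}$ normalized so that $\omega_a(C) > 0$. Since $\{v_i : i \in \sigma\}$ is a basis, $\omega_a(C) \neq 0$, and rearranging the dependence gives
\[
v_a = -\sum_{i \in \sigma} \frac{\omega_i(C)}{\omega_a(C)}\, v_i.
\]
Applying the linear function $g_\sigma$ and using $g_\sigma(v_i) = f_i$ yields $g_\sigma(v_a) = -\sum_{i \in \sigma} \omega_i(C) f_i / \omega_a(C)$. Substituting into $f_a > g_\sigma(v_a)$ and multiplying through by the positive scalar $\omega_a(C)$ produces exactly inequality \eqref{ineq}. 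This is a chain of equivalences, so both directions of the lemma fall out simultaneously: whenever the inequalities hold for every facet $\sigma$ and every $a \in V\setminus\sigma$, each $H_\sigma$ lies strictly below all other lifted vectors and hence supports a facet of the lifted cone. Since $\Delta$ is already a triangulation of $\VV$ (equivalently, a complete simplicial fan), its facets are in bijection with the lower faces of this cone, so the lift is valid.

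No step presents a real obstacle; the only mild care required is the sign convention. Having fixed the orientation of $\R^{D+1}$ once and for all using that $\VV$ positively spans $\R^D$, the uniform normalization $\omega_a(C) > 0$ ensures that the equivalence $f_a > g_\sigma(v_a) \Leftrightarrow \sum_{i \in C} \omega_i(C) f_i > 0$ is globally consistent across all choices of $(\sigma,a)$. The lemma is then essentially a restatement in circuit language of the classical criterion for a lifting to induce a given regular subdivision.
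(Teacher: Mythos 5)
Your proposal is correct and follows essentially the same route as the paper's proof: fix a facet $\sigma$, compute the height $g_\sigma(v_a)$ of the hyperplane $H_\sigma$ over $v_a$ using the circuit dependence (this is the paper's $f_a'$), and observe that the validity condition $f_a>g_\sigma(v_a)$ rearranges to inequality~\eqref{ineq} after multiplying by $\omega_a(C)>0$. The only cosmetic difference is that the remark about orienting $\R^{D+1}$ via pointedness of the lifted cone is superfluous: ``above'' is already unambiguously the direction of the last coordinate, since $\{v_i\}_{i\in\sigma}$ being a basis of $\R^D$ forces $H_\sigma$ not to contain the vertical axis.
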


\begin{proof}
	This is similar to Proposition 5.2.6(i) in \cite{triangbook}. For a facet $\sigma$ and an extra element that forms a circuit $C$, we need to prove that the extra element is in the correct side via the lifting vector $f$. Let $i$ be the extra element and $f_{i}'$ the last coordinate of the intersection point of $v_i\times \R$ with the hyperplane spanned by $(v_j,f_j)_{j\in C}$. We want that $f_{i}>f_{i}'$. But obviously
	\[
	\sum_{j\in \sigma}\omega_{j}(C)f_{j}+\omega_{i}(C)f_{i}'=0
	\]
	so the condition is equivalent to $f_{i}>f_{i}'$.
\end{proof}

\begin{remark}
\label{rem:regular}
Two remarks are in order:
\begin{enumerate}
\item If we already know $\Delta$ to be a triangulation of $\VV$, it is enough to check the inequalities for the case when $i$ is a neighbor of $\sigma$, because a locally convex cone is globally convex. That is, checking validity amounts to checking one linear inequality for each ridge in $\Delta$: if 
$\tau$ is a ridge and $\tau\cup\{i\}$ and $\tau\cup\{j\}$ are the two facets containing it, we need to check inequality \eqref{ineq} for the circuit 
$C=\tau\cup\{i,j\}$ contained in $\tau\cup\{i,j\}$. (See, e.g., \cite[Theorem 2.3.20 and Lemma 8.2.3]{triangbook}).
\item If $(f_i)_{i\in V}$ is a valid lifting vector and $(w_i)_{i\in V}$ is the vector of values that a certain linear functional takes in $\VV$ then $(f_i+w_i)_{i\in V}$ is also valid. (See, e.g., \cite[Proposition 5.4.1]{triangbook}). In particular, when looking for valid vectors we can assume, without loss of generality, that $f_i=0$ for all $i$ in a certain independent set $S$ (we here say that $S$ is independent if the vectors $\{v_i\}_{i\in S}$ are linearly independent).
\end{enumerate}
\end{remark}

\subsection{Rigidity}
\label{sec:rigidity}

\label{subsec:rigidity}
Let $\p=(p_1,\dots,p_n)$ be a configuration of $n$ points in $\R^d$, labelled by $[n]$.%
\footnote{
By a \emph{configuration} we mean an ordered set of points or vectors, usually labelled by the first $n$ positive integers.
For this reason we write $\p$ as a vector rather than a set.
} 
Their \emph{bar-and-joint rigidity matrix} is the following $\binom{n}{2}\times nd$
matrix:
\begin{equation} \label{R}
R(\p):=
\begin{pmatrix}
p_1-p_2 & p_2-p_1 & 0 & \dots & 0 & 0 \\
p_1-p_3 & 0 & p_3-p_1 & \dots & 0 & 0 \\
\vdots & \vdots & \vdots & & \vdots & \vdots \\
p_1-p_n & 0 & 0 & \dots & 0 & p_n-p_1 \\
0 & p_2-p_3 & p_3-p_2 & \dots & 0 & 0 \\
\vdots & \vdots & \vdots & & \vdots & \vdots \\
0 & 0 & 0 & \dots & p_{n-1}-p_n & p_n-p_{n-1}
\end{pmatrix}.
\end{equation}
Since there is a row of the matrix for each pair $\{i,j\}\in \bnn$, rows can be considered labeled by edges in the complete graph $K_n$. The matrix is a sort of ``directed incidence matrix'' of $K_n$, except instead of having one column for each vertex $i\in[n]$ we have a block of $d$ columns, and instead of putting a single $+1$ and $-1$ in the row of edge  $\{i,j\}$ we put the $d$-dimensional (row) vectors $p_i-p_j$ and $p_j-p_i$. 

An important property of $R(\p)$ (Lemma 11.1.3 in \cite{Whiteley}) is that if the points $\p$ affinely span $\R^d$ then the rank of $R(\p)$ equals
\begin{equation}
\label{eq:rank}
\begin{cases}
	\binom{n}{2} & \text{if}\ n\leq d+1, \\
	dn-\binom{d+1}{2} & \text{if}\ n\geq d.
	\end{cases} 
\end{equation}
(Observe that the two formulas give the same result for $n\in\{d,d+1\}$.)
If the points span an $r$-dimensional affine subspace, the same formulas hold with $r$ substituted for $d$.

A pair $(G, \p)$ where $G$ is a graph on $n$ vertices and $\p$ is a set of $n$ points in $\R^n$ (positions for the vertices) is usually called a framework. 
Seeing this framework as a bar-and-joint structure, the coefficients of any linear dependence among the rows of $R(\p)$ can be interpreted as forces acting along the bars (edges) with the property that the resultant force on every vertex cancels out.
These systems of forces are called \textit{self-stresses} of \textit{equilibrium stresses}. 
We will denote $Z(R(\p))$ the space of self-stresses of $\p$. 

In the same manner, a linear dependence among the columns of $R(\p)$ can be understood as an infinitesimal motion of the vertices (that is, an assignment of velocity vectors to the joints) that preserves the length of all bars. This is called an \textit{infinitesimal flex} of the framework. 
We do not introduce a particular notation for flexes since our main interest is in the vector configuration, and matroid, of \emph{rows} of $R(\p)$. To this end, for any  $E\subset \bnn$ we denote by $R(\p)|_E$ the restriction of $R(\p)$ to the rows or elements indexed by $E$.

\begin{definition}[Rigidity]
Let $E\subset \bnn$ be a subset of edges of $K_n$ (equivalently, of rows of $R(\p)$).  We say that $E$, or the corresponding subgraph of $K_n$,  is \emph{self-stress-free} or \emph{independent} in the position $\p$ if the rows of $R(\p)|_E$ are linearly independent, and \emph{rigid} or \emph{spanning} if they are linearly spanning (that is, if they have the same rank as the whole matrix $R(\p)$).
\end{definition}

Put differently, self-stress-free and rigid graphs are, respectively, the independent and spanning sets in the linear matroid of rows of $R(\p)$. We call this matroid the \textit{bar-and-joint rigidity matroid} of $\p$ and denote it $\RR(\p)$. It is a matroid with ground set $\bnn$ and, for points affinely spanning $\R^d$, of rank given by Equation~\eqref{eq:rank}.
See, for example, \cite{GSS,Whiteley} for more information on rigidity matrices and their matroids.
Let us remark that, although rigidity theory usually deals only with $\RR(\p)$ as an (unoriented) matroid, its definition as the linear matroid of a configuration of real vectors produces in fact an \emph{oriented matroid}. Orientations will be important for us in Section \ref{sec:triangsrigid}, in the light of the results of Section~\ref{sec:polytopal}.

The following  two matrices and matroids reminiscent of $R(\p)$ are of interest:
\begin{itemize}

\item The \emph{hyperconnectivity} matroid of the configuration $\p=(p_1,\dots,p_n)$ in $\R^d$, denoted $\HH(\p)$, is the matroid of rows of
\begin{equation}\label{H}
H(\p):=
\begin{pmatrix}
p_2 & -p_1 & 0 & \dots & 0 & 0 \\
p_3 & 0 & -p_1 & \dots & 0 & 0 \\
\vdots & \vdots & \vdots & & \vdots & \vdots \\
p_n & 0 & 0 & \dots & 0 & -p_1 \\
0 & p_3 & -p_2 & \dots & 0 & 0 \\
\vdots & \vdots & \vdots & & \vdots & \vdots \\
0 & 0 & 0 & \dots & p_n & -p_{n-1}
\end{pmatrix}
\end{equation}

\item For points $\q=(q_1,\dots,q_n)$ in $\R^2$ and a parameter $d\in\N$, the $d$-dimensional \textit{cofactor rigidity}%
\footnote{This form of rigidity is usually called \emph{$C_{d-1}^{d-2}$-cofactor rigidity}, since it is related to the existence of piecewise linear splines of degree $d-1$ and of type $C^{d-2}$.}
 matroid of the points $q_1,\ldots,q_n$, which we denote $\CC_d(\q)$, is the matroid of rows of 
\begin{equation}\label{eq:C}
C_d(\q):= 
\begin{pmatrix}
\cc_{12} & -\cc_{12} & 0 & \dots & 0 & 0 \\
\cc_{13} & 0 & -\cc_{13} & \dots & 0 & 0 \\
\vdots & \vdots & \vdots & & \vdots & \vdots \\
\cc_{1n} & 0 & 0 & \dots & 0 & -\cc_{1n} \\
0 & \cc_{23} & -\cc_{23} & \dots & 0 & 0 \\
\vdots & \vdots & \vdots & & \vdots & \vdots \\
0 & 0 & 0 & \dots & \cc_{n-1,n} & -\cc_{n-1,n}
\end{pmatrix},
\end{equation}
where the vector $\cc_{ij}\in\R^d$ associated to $q_i=(x_i,y_i)$ and $q_j=(x_j,y_j)$ is
\[
\cc_{ij} := 
\left((x_i-x_j)^{d-1}, (y_i-y_j)(x_i-x_j)^{d-2},\dots,(y_i-y_j)^{d-1}\right).
\]

For $d=1$ this is independent of the choice of $\q$ and equals the directed incidence matrix of $K_n$. For $d=2$ we have $C_2(\q)=R(\q)$.
\end{itemize}

The matroids $\RR(\p)$ and $\CC_{d}(\q)$ are invariant under affine transformation of the points, and $\HH(\p)$ under linear transformation. (More generally, although we do not need this, $\RR(\p)$ and $\CC_d(\q)$ are invariant under projective transformation in $\R\proj^d$ and $\R\proj^2$ as compactifications of $\R^d$ and $\R^2$, and $\HH(\p)$ under projective transformation in $\R\proj^{d-1}$ as a quotient of $\R^d\setminus \{0\}$).
We say that the points chosen are in \emph{general position} for $\RR$ (respectively for $\CC$ or for $\HH$) if no $d+1$ of them lie in an affine hyperplane (respectively no three of them in an affine line or no $d$ of them in a linear hyperplane). In the three cases, general position implies that the corresponding matroid has the rank stated in Equation~\eqref{eq:rank} and that every copy of the graph $K_{d+2}$ is a circuit.  Nguyen~\cite{Nguyen} showed that the matroids on $\bnn$ with these properties are exactly the  \textit{abstract rigidity matroids} introduced by Graver in \cite{Graver}.

Clearly, in the three cases and for each choice of the ``dimension'' $d$ there is a unique most free matroid that can be obtained, in the sense that the independent sets in any other matroid will also be independent in this one, realized by sufficiently generic choices of the points. We call these the \emph{generic bar-and-joint, hyperconnectivity, and cofactor matroids of dimension $d$ on $n$ points}, and denote them $\RR_d(n)$, $\HH_d(n)$, $\CC_d(n)$. 
(Observe, however, that this generic matroid may stratify into several different generic oriented matroids; this is important for us since we will be concerned with the signs of circuits, by the results in Section~\ref{sec:polytopal}).

In \cite{CreSan:moment} we prove that these three rigidity theories coincide when the points $\p$ or $\q$ are chosen along the moment curve (for bar-and-joint and hyperconnectivity) and the parabola (for cofactor). More precisely:

\begin{theorem}[\cite{CreSan:moment}] 
\label{thm:3matroids}
Let $t_1< \dots < t_n\in \R$ be real parameters. Let 
\[
p_i=(1,t_i,\dots,t_i^{d-1})\in \R^d, \quad
p'_i=(t_i,t_i^2,\dots,t_i^d)\in \R^d, \quad
q_i=(t_i,t_i^2)\in \R^2.
\]
Then, the three matrices $H(p_1,\dots,p_n)$, $R(p'_1,\dots,p'_n)$ and $C_d(q_1,\dots,q_n)$ can be obtained from one another 
multiplying on the right by a regular matrix and then multiplying rows by some positive scalars.

In particular, the rows of the three matrices define the same oriented matroid. 
\end{theorem}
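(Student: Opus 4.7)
The plan is to exhibit, for each pair of the three matrices, a transformation of the form $A \mapsto D\,A\,M$, where $D$ is a positive diagonal matrix and $M$ is invertible. Right multiplication by an invertible matrix preserves the linear matroid of rows (changing only the basis in column space), and left multiplication by a positive diagonal matrix rescales each row by a positive factor, preserving the oriented matroid of rows. Hence such transformations are enough to conclude that all three matrices define the same oriented matroid on $\bnn$.

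The starting point is the explicit computation of the row for edge $\{i,j\}$ in each matrix. For $p'_i=(t_i,\ldots,t_i^d)$ one has
\begin{equation*}
p'_i - p'_j = (t_i - t_j)\bigl(h_0(t_i,t_j),\,h_1(t_i,t_j),\,\ldots,\,h_{d-1}(t_i,t_j)\bigr),
\end{equation*}
where $h_\ell(x,y)=\sum_{k=0}^{\ell} x^k y^{\ell-k}$ is the complete homogeneous symmetric polynomial; and for $q_i=(t_i,t_i^2)$ the cofactor vector is
\begin{equation*}
\cc_{ij} = (t_i-t_j)^{d-1}\bigl(1,\,t_i+t_j,\,\ldots,\,(t_i+t_j)^{d-1}\bigr).
\end{equation*}
Dividing the row of $\{i,j\}$ in $R(p')$ by $(t_j-t_i)>0$, and in $C_d(q)$ by $(t_j-t_i)^{d-1}>0$, reduces each matrix to polynomial entries without altering the oriented matroid. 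The key identity relating the first two rigidities is $p'_i - p'_j = t_i p_i - t_j p_j$; it expresses each block-$i$ entry of $R(p')$ as a linear combination of $p_i$ (which appears in block $j$ of $H(p)$) and $p_j$ (which appears in block $i$ of $H(p)$), and I would use it to assemble the matrix $M$ relating the two matrices column by column.

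With these computations in hand, the rest is a linear-algebra bookkeeping exercise: produce invertible column operations that turn the reduced $R(p')$ into $H(p)$ and, separately, into the reduced $C_d(q)$, and then check that the full $dn\times dn$ transition matrices are invertible. Their determinants reduce to products of Vandermonde-type expressions in the $t_i$, which are nonzero because the parameters are distinct. The main obstacle is that the required column operations cannot be block-diagonal with respect to the vertex blocks: the spans of the block-$i$ entries of the reduced $R(p')$ and $C_d(q)$, viewed as polynomials in $(t_i,t_j)$, are different $d$-dimensional subspaces of the symmetric polynomials of degree at most $d-1$. Consequently the change of basis must mix columns across different vertex blocks, using the interaction between blocks $i$ and $j$ forced by the edge structure. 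Organising these inter-block operations so that the resulting transition matrix is genuinely invertible is the delicate part of the argument.
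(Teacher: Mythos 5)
Your high‐level strategy matches the paper's (and the underlying argument in \cite{CreSan:moment}): divide each row of $R(p')$ by $t_j-t_i>0$ and each row of $C_d(q)$ by $(t_j-t_i)^{d-1}>0$, then exhibit an invertible right multiplication taking the result to $H(p)$. The gap is in the last paragraph: the worry that the column operations ``cannot be block-diagonal with respect to the vertex blocks'' and must mix columns across blocks is incorrect, and since you stop there, the proof is incomplete.

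In fact the transition matrix is block-diagonal, which is why the argument is short. Fix block $i$. For any edge $\{i,j\}$, the block-$i$ entry of the reduced $R(p')$ is (up to the sign coming from whether $j>i$ or $j<i$) the row vector $\bigl(h_0(t_i,t_j),\ldots,h_{d-1}(t_i,t_j)\bigr)$, while the block-$i$ entry of $H(p)$ is (up to the opposite sign) $p_j=(1,t_j,\ldots,t_j^{d-1})$. Since
\[
h_\ell(t_i,t_j)\;=\;\sum_{m=0}^{\ell} t_i^{\,\ell-m}\,t_j^{\,m},
\]
one has $\bigl(h_0,\ldots,h_{d-1}\bigr) = \bigl(1,t_j,\ldots,t_j^{d-1}\bigr)\,A_i$, where $A_i$ is the $d\times d$ upper unitriangular matrix with entries $(A_i)_{m\ell}=t_i^{\,\ell-m}$ for $m\le\ell$; crucially $A_i$ depends only on $t_i$, not on $t_j$. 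The symmetry $h_\ell(t_i,t_j)=h_\ell(t_j,t_i)$ guarantees that the same rule applies to both endpoint blocks of every edge. Taking the $dn\times dn$ block-diagonal matrix with $-A_i^{-1}$ in block $i$ (a matrix of determinant $\pm1$, not merely nonzero by Vandermonde reasons) sends the reduced $R(p')$ to $H(p)$. The identical argument with the binomial expansion $(t_i+t_j)^{\ell}=\sum_{m\le\ell}\binom{\ell}{m}t_i^{\,\ell-m}t_j^{\,m}$, giving a unitriangular matrix $B_i$ and block-diagonal matrix with $(-1)^{d-1}B_i^{-1}$ in block $i$, handles $C_d(q)$. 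Your statement that the block-$i$ entries ``span different $d$-dimensional subspaces of the symmetric polynomials'' is a red herring: you are comparing the three families as bivariate polynomials in $(t_i,t_j)$, but what matters is that for fixed $i$ each family, viewed as polynomials in $t_j$ alone of degree $<d$, is obtained from the monomial basis $1,t_j,\ldots,t_j^{d-1}$ by a fixed invertible linear map depending only on $t_i$. So there is no delicate bookkeeping; once the unitriangular identities above are noted, the proof is done.
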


\begin{proof}
This follows from the proofs of Lemma 2.3 and Theorem 2.5 in \cite{CreSan:moment}. Although the statements there speak only of the \emph{matroids} of rows,  the proofs show that dividing each row $(i,j)$ of $R(p'_1,\dots,p'_n)$ by $t_j-t_i$ and that of $C_d(q_1,\dots,q_n)$ by $(t_j-t_i)^{d-1}$ one obtains matrices that are equivalent to  $H(p_1,\dots,p_n)$ under multiplication on the right by a regular matrix.
\end{proof}

\begin{definition}[Polynomial rigidity]
\label{def:poly}
We call the matrix $H(p_1,\dots,p_n)$ in the statement of Theorem~\ref{thm:3matroids} the \emph{polynomial $d$-rigidity matrix with parameters $t_1,\dots,t_n$}. We denote it $P_d(t_1,\dots,t_n)$, and denote $\PP_d(t_1,\dots,t_n)$ the corresponding matroid.
\end{definition}

Among the polynomial rigidity matroids $\PP_d(t_1,\dots,t_n)$ there is again one that is the most free, obtained with a sufficiently generic choice of the $t_i$. We denote it $\PP_d(n)$ and call it the \emph{generic polynomial $d$-rigidity matroid on $n$ points}. Theorem~\ref{thm:3matroids} implies that we can regard $\PP_d(n)$ as capturing generic bar-and-joint rigidity along the moment curve, generic hyperconnectivity along the moment curve, or generic cofactor rigidity on a conic.

We do not know whether $\PP_d(n)$ equals $\HH_d(n)$, but we do know that $\HH_d(n)$, $\RR_d(n)$ and $\CC_d(n)$ are different for $d\ge 4$ and $n$ large enough. For example:
\begin{itemize}
\item $K_{d+1,d+1}$ is a circuit in $\PP_d(n)$ and $\HH_d(n)$, but independent in $\RR_d(n)$ and $\CC_d(n)$ for every $d\ge 2$~(More strongly, $K_{d+1,\binom{d+1}2}$ is a basis in both, \cite[Theorem 9.3.6 and Example 11.3.12]{Whiteley}).
\item $K_{6,7}$ is a basis in $\CC_4(n)$ and dependent in $\RR_4(n)$ for $n\ge 13$~\cite[Sections 11.4 and 11.5]{Whiteley}. (The example generalizes to show that $\CC_d(n)\ne \RR_d(n)$ for $n-9\ge d\ge 4$).
\end{itemize}
See \cite{CreSan:moment} for a recent account of these and other relations among these matroids, including some questions and conjectures. See~\cite{NSW21} for a comprehensive study of bar-and-joint and cofactor rigidities.

\section{Obstructions to realizability with cofactor rigidity}
\label{sec:obstructions}

Our main goal in this paper is to study whether one of the three forms of rigidity from Section~\ref{sec:rigidity} provides, by choosing the configurations $\p$ in $\R^{2k}$ or $\q$ in $\R^2$ adequately, realizations the $k$-associahedron $\OvAss{k}{n}$. 
For positive results (realizations) the strongest possible setting goes via the polynomial rigidity of Definition~\ref{def:poly}, since that is a special case of the other three. For negative results (obstructions to realization) we are going to use cofactor rigidity. This is stronger than using polynomial rigidity, and is also the most natural setting for studying $k$-associahedra since, after all, the combinatorics of a $k$-associahedron comes from thinking about crossings in the complete graph embedded with vertices in convex position in the plane.

\subsection{Some results on cofactor rigidity}\label{sec:cofactor}

In this section we present some  results about cofactor rigidity that we need later. 

We first show that cofactor rigidity is invariant under projective transformation. This, as some other results from this section, was already noticed by Whiteley~\cite{Whiteley}, but we develop things from scratch since we will not only be interested in the cofactor rigidity \emph{matroid} but also in the \emph{oriented matroid}.
Notice also that the same projective invariance of the matroid is true and well-known for bar-and-joint rigidity (see again~\cite{Whiteley}). 

Throughout this section we work primarily with a vector configuration $\Q=(Q_1,\dots,Q_n)$ in dimension three, that is, with $Q_i=(X_i,Y_i,Z_i) \in \R^3\setminus \{0\}$. We normally assume that $\Q$ is in general position (every three of its vectors form a linear basis) and sometimes that it is also in \emph{convex position}: (a) each of the vectors  $Q_i$ generates a ray of 
\[
\cone(\Q) = \{\sum_i\lambda_i Q_i : \lambda_i \ge 0\}, 
\]
and all these rays are different, and (b) the cyclic order of $Q_1,\dots,Q_n$ equals their order as rays of $\cone(\Q)$.

In this setting, let us redefine the vectors $\cc_{ij}$ that appear in the matrix $C_d(\q)$ in terms of the vectors $Q_i$ as follows. We let
\[
\cc_{ij}=
\left(x_{ij}^{d-1}, y_{ij}x_{ij}^{d-2},\dots,y_{ij}^{d-1}\right),
\]
{ where }
\[
x_{ij}=X_iZ_j-Z_iX_j, \quad y_{ij}=Y_iZ_j-Z_iY_j.
\]
We define the matrix $C_d(\Q)$ exactly as in Equation~\eqref{eq:C}, but with these new vectors $\cc_{ij}$.
Observe that the original definition of $C_d(\q)$ is a special case of this one, obtained when we take all the $Z_i$'s equal to $1$ and we let $q_i=(X_i,Y_i)$. 

With the new definition, we have the following invariance:

\begin{proposition}
\label{prop:invariance}
Let $\Q=(Q_1,\dots,Q_n)$ be a vector configuration in $\R^3\setminus \{0\}$.
Then, 
\begin{enumerate}
\item The column-space of $C_d(\Q)$, hence the oriented matroid $\CC_d(\Q)$ of its rows,  is invariant under a nonsingular linear transformation of $\Q$.
\item The matroid $\CC_d(\Q)$ is also invariant under rescaling (that is, multiplication by non-zero scalars) of the vectors $Q_i$. If the scalars are all positive or $d$ is odd then the same holds for the oriented matroid.
\end{enumerate}
\end{proposition}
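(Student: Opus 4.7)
The plan for part (2) is a direct computation. Observe that $x_{ij}$ and $y_{ij}$ are each bilinear and antisymmetric in $(Q_i,Q_j)$, so under $Q_i\mapsto \lambda_i Q_i$ each picks up a factor $\lambda_i\lambda_j$, giving a new row vector $(\lambda_i\lambda_j)^{d-1}\cc_{ij}$. Thus the new matrix is obtained from $C_d(\Q)$ by rescaling row $\{i,j\}$ by the nonzero scalar $(\lambda_i\lambda_j)^{d-1}$. Since row rescaling by nonzero scalars preserves the matroid and by positive scalars preserves the oriented matroid, the claim follows once we observe that $(\lambda_i\lambda_j)^{d-1}>0$ for all $i\ne j$ holds exactly when all $\lambda_i>0$ or $d-1$ is even, i.e., $d$ is odd.

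For part (1), the plan is to reformulate the row dependences of $C_d(\Q)$ (i.e., the left kernel, or self-stress space) in a manifestly $GL_3$-invariant way. The main object is the linear form
\[
M_{ij}(u,v,w):=y_{ij}u-x_{ij}v+z_{ij}w, \quad z_{ij}:=X_iY_j-Y_iX_j,
\]
which equals (up to signs) $(Q_i\times Q_j)\cdot(u,v,w)$ and hence is the equation of the projective line through $Q_i$ and $Q_j$. A direct computation confirms $M_{ij}(Q_i)=M_{ij}(Q_j)=0$, so $M_{ij}\in\mathrm{Ann}(Q_i)$, the $2$-dimensional subspace of $(\R^3)^*$ of linear forms vanishing at $Q_i$, and $M_{ij}^{d-1}\in \mathrm{Sym}^{d-1}(\mathrm{Ann}(Q_i))$, a $d$-dimensional subspace of $\R[u,v,w]_{d-1}$.

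Next I would prove a key lemma: assuming $Z_i\ne 0$, the restriction $\rho_i:\mathrm{Sym}^{d-1}(\mathrm{Ann}(Q_i))\to \R[u,v]_{d-1}$, $f\mapsto f(u,v,0)$, is an isomorphism. Both spaces have dimension $d$, and injectivity reduces to checking that $w\notin\mathrm{Ann}(Q_i)$, which holds iff $Z_i\ne 0$. Under $\rho_i$, $M_{ij}^{d-1}$ maps to $(y_{ij}u-x_{ij}v)^{d-1}$, whose monomial coefficients are (up to a fixed linear change of coordinates involving signs and binomial factors) the entries of $\cc_{ij}$. Hence the self-stress condition at vertex $i$, namely $\sum_j\omega_{ij}\cc_{ij}=0$, is equivalent to the intrinsic identity $\sum_j\omega_{ij}M_{ij}^{d-1}=0$ in $\mathrm{Sym}^{d-1}(\mathrm{Ann}(Q_i))$.

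Finally, using the cross-product transformation rule $(AQ_i)\times(AQ_j)=\det(A)\,A^{-T}(Q_i\times Q_j)$ for $A\in GL_3$, I would derive $M'_{ij}(u,v,w)=\det(A)\,M_{ij}(A^{-1}(u,v,w))$, and observe that the linear isomorphism $T(f):=f\circ A^{-1}$ maps $\mathrm{Sym}^{d-1}(\mathrm{Ann}(Q_i))$ onto $\mathrm{Sym}^{d-1}(\mathrm{Ann}(AQ_i))$, with $T(M_{ij}^{d-1})=\det(A)^{-(d-1)}(M'_{ij})^{d-1}$. Because the nonzero constant $\det(A)^{-(d-1)}$ is the same for all pairs, for any $\omega\in\R^{\binom{n}{2}}$ the identities $\sum_j\omega_{ij}M_{ij}^{d-1}=0$ and $\sum_j\omega_{ij}(M'_{ij})^{d-1}=0$ are equivalent. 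Thus $\ker C_d(A\Q)^T=\ker C_d(\Q)^T$ as subspaces of $\R^{\binom{n}{2}}$, which yields equality of column spaces (each is the annihilator of the common left kernel) and preservation of the oriented matroid of rows (because the signed dependence vectors are literally the same). The main obstacles will be the sign bookkeeping in identifying $\rho_i(M_{ij}^{d-1})$ with $\cc_{ij}$, and the degenerate case in which some $Z_i=0$ (i.e., $Q_i$ is at infinity), which I would handle by a continuity argument from the generic case.
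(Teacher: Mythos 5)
Your argument for part (2) is correct and is the same as the paper's (each $x_{ij},y_{ij}$ is bilinear in $(Q_i,Q_j)$, so the row of $\{i,j\}$ is scaled by $(\lambda_i\lambda_j)^{d-1}$, whence the (un)oriented matroid statement).

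For part (1) your route is correct and is a genuine dual of the paper's argument. The paper works at the level of column spaces: it observes that the entries of $\cc_{ij}$, viewed as polynomials in $Q_j$, form a basis of $C^{d-2}_{d-1}(Q_i)$ (homogeneous degree-$(d-1)$ forms whose partials up to order $d-2$ vanish at $Q_i$), shows that precomposition with $L$ gives a linear isomorphism $C^{d-2}_{d-1}(LQ_i)\to C^{d-2}_{d-1}(Q_i)$, and concludes $C_d(L\Q)=C_d(\Q)M^{-1}$ for a block-diagonal nonsingular $M$. You instead work with the left kernel (the self-stress space), packaging $\cc_{ij}$ as the coefficient vector of the single element $M_{ij}^{d-1}\in\operatorname{Sym}^{d-1}(\operatorname{Ann}(Q_i))$ — this space coincides with $C^{d-2}_{d-1}(Q_i)$ — and using the cross-product transformation rule to show that the $M_{ij}^{d-1}$ all pick up the same nonzero factor $\det(A)^{-(d-1)}$, so the self-stress conditions are literally unchanged. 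Your version is a bit longer (you need the intermediate isomorphism $\rho_i$ and the bookkeeping relating monomial coefficients of $(y_{ij}u-x_{ij}v)^{d-1}$ to $\cc_{ij}$), but it is more geometrically transparent: $M_{ij}$ is the equation of the projective line $\overline{Q_iQ_j}$, and $M_{ij}^{d-1}$ is the classical ``smoothing cofactor'' from spline theory, so the projective covariance is manifest, and the argument extends verbatim to preserving the signed circuits of the row oriented matroid.

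One remark on the degenerate locus. Your key lemma (injectivity of $\rho_i$) requires $Z_i\neq0$, and the paper's claim that the entries of $\cc_{ij}$ form a basis of $C^{d-2}_{d-1}(Q_i)$ has exactly the same restriction: if $Z_i=0$ then $x_{ij}$ and $y_{ij}$, as linear forms in $Q_j$, are proportional (both multiples of the form $Z_j$), so the monomials $x_{ij}^{a}y_{ij}^{b}$ span only a line. So this is a gap shared by both proofs, and you are right to flag it. Be careful, though, with the ``continuity'' fix: left kernels of matrices do not vary continuously when the rank drops, so one cannot simply pass to the limit from nearby generic configurations without first knowing that the rank of $C_d(\Q)$ (equivalently the dimension of the self-stress space) is itself invariant under $GL_3$ on the degenerate locus. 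A safer route is to first compose with a generic $B\in GL_3$ so that $B\Q$ avoids the bad locus, prove the statement for $B$, and then handle $AB^{-1}$ acting on $B\Q$ — but this still leaves the case where $A\Q$ itself is degenerate, which needs a separate argument (for instance, an explicit description of $C_d(\Q)$ at the boundary). In any case, this does not affect the applications in the paper, where the relevant transformation can be chosen so that at most one point sits on the line at infinity and the matrix is analyzed directly (as in the proof of the Coning Theorem).
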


\begin{proof}
For each vector $Q\in \R^3\setminus \{0\}$ let $C_{d-1}^{d-2}(Q)$ be the set of all three-variate polynomials in $\R[X,Y,Z]$ that are homogeneous of degree $d-1$ and such that all their partial derivatives up to order $d-2$ vanish at $Q$. This is a vector space of dimension $d$.
In fact, if we fix a $Q_i=(X_i,Y_i,Z_i)$ and consider $Q_j=(X,Y,Z)$ as a vector of variables, then the $d$ entries in the vector $\cc_{ij}$ are a basis for the space $C_{d-1}^{d-2}(Q_i)$. In particular, the $i$-th block in the matrix $C_d(\Q)$ has as rows the vectors obtained by evaluating that basis of $C_{d-1}^{d-2}(Q_i)$ either at $0$ (if the row does not use the point $i$) or at one of the $Q_j$'s (if the row corresponds to the edge $\{i,j\}$). 

Now, let $\Q=(Q_1,\dots,Q_n)$.
A nonsingular linear transformation $L:\R^3\to \R^3$ induces, for each vector $Q_i$, a linear map $\tilde L_i$ from the space $C_{d-1}^{d-2}(L(Q_i))$ to the space $C_{d-1}^{d-2}(Q_i)$, defined by $\tilde L_i(f) = f \circ L$. 

Let $M_i\in \R^{d\times d}$ be the matrix of $\tilde L_i$ in the bases of $C_{d-1}^{d-2}(L(Q_i))$ and  $C_{d-1}^{d-2}(Q_i)$ described above. Let $M\in \R^{dn\times dn}$ be the block-diagonal matrix with blocks of size $d\times d$ and having in the $i$-th diagonal block the matrix $M_i$. Then we have that
\[
C_d(L(\Q)) = C_d(\Q) M^{-1}.
\]
As $L$ is nonsingular, this proves part (1).

For part (2):  the effect of multiplying a $Q_i$ by a scalar $\lambda_i$ is to multiply all the rows of edges using $i$  by the scalar $\lambda_i^{d-1}$. Hence, although the column space $C_d(\Q)$ changes by rescaling, the matroid $\CC_d(\Q)$ does not, and the oriented matroid does not either as long as the rescaling factors are all positive or $d$ is odd. 
\end{proof}

We now translate the above result to the original setting of a point configuration $\q=(q_1,\dots,q_n)$ in $\R^2$:

\begin{corollary}
The matroid $\CC_d(\q)$ is invariant under projective transformation of $\q$. If $d$ is odd or the projective transformation sends $\conv(\q)$ to lie in the affine chart of $\RP^2$ (the subset of the projective points $[X:Y:Z]$ with $Z\ne 0$), the same is true for the oriented matroid.
\end{corollary}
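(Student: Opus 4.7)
The plan is simply to reduce the two-dimensional projective statement to the three-dimensional linear one already proved in Proposition~\ref{prop:invariance}. Write each $q_i = (X_i,Y_i) \in \R^2$ as the projective point $Q_i = (X_i, Y_i, 1) \in \R^3$. With the generalized definition of $C_d(\Q)$ given just before Proposition~\ref{prop:invariance}, we have $C_d(\q) = C_d(\Q)$ on the nose (since $x_{ij} = X_i - X_j$ and $y_{ij} = Y_i - Y_j$ when $Z_i = Z_j = 1$). Thus the oriented matroid $\CC_d(\q)$ is nothing other than $\CC_d(\Q)$ for the lifted configuration.

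Next, a projective transformation $\phi$ of $\R^2$ (extended to $\RP^2$) is by definition induced by some nonsingular linear map $L:\R^3\to\R^3$. The point $\phi(q_i)\in \R^2$ has homogeneous coordinates $L(Q_i) = (X_i',Y_i',Z_i')$, so to view $\phi(\q)$ as an affine point configuration we must rescale each $L(Q_i)$ by the positive or negative factor $1/Z_i'$, obtaining $Q_i'' = L(Q_i)/Z_i' = (\phi(q_i),1)$. Hence
\[
\CC_d(\phi(\q)) = \CC_d(Q_1'',\dots,Q_n'') = \CC_d(L(Q_1),\dots,L(Q_n)),
\]
where the last equality uses part (2) of Proposition~\ref{prop:invariance} for the matroid (and for the oriented matroid as well whenever $d$ is odd, since rescaling by $\lambda_i$ multiplies rows by $\lambda_i^{d-1}$). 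Then part (1) of Proposition~\ref{prop:invariance} gives $\CC_d(L(Q_1),\dots,L(Q_n)) = \CC_d(Q_1,\dots,Q_n) = \CC_d(\q)$ as (oriented) matroids.

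This proves the matroid statement unconditionally, and the oriented matroid statement when $d$ is odd. For even $d$ we only need to arrange that all rescaling factors $1/Z_i'$ have the same sign, because the oriented matroid of $\Q$ is unchanged under a global sign flip (multiplying every vector by $-1$ corresponds to $\lambda_i = -1$ for all $i$, which is harmless since all rescaling factors are equal). The $Z_i'$ have a common sign precisely when the lifted points $L(Q_i)$ all lie on one side of the plane $Z=0$, i.e., when $\phi$ sends every $q_i$, and hence $\conv(\q)$, into the affine chart $\{Z\ne 0\}$ of $\RP^2$ without crossing the line at infinity. The only mildly delicate point is this last sign-tracking argument; once it is in place the corollary follows immediately from Proposition~\ref{prop:invariance}.
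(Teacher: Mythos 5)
Your proof is correct and follows essentially the same route as the paper's own argument: lift the affine points to $\R^3$ via $Q_i=(q_i,1)$, replace the projective map by a linear map $L$, and reduce to the two parts of Proposition~\ref{prop:invariance}, tracking the rescaling factors $1/Z_i'$ in between. In fact you are slightly more careful than the paper at one point: the paper asserts that the hypothesis on $\conv(\q)$ forces all $Z_i'$ to be \emph{positive}, whereas strictly speaking they need only share a common sign; your observation that a simultaneous sign flip $\lambda_i=-1$ for all $i$ multiplies each row by $(-1)^{2(d-1)}=1$ and hence is harmless neatly closes this small gap.
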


\begin{proof}
Starting with a point configuration $\q=(q_1,\dots,q_n)$ in the (affine) plane we can consider the vector configuration $\Q=(Q_1,\dots,Q_n)$ with  $Q_i=(q_i,1)\in \R^3\setminus \{0\}$. A projective transformation in $\q$ amounts to a linear transformation in $\Q$. Moreover, if the projective transformation sends $\conv(\q)$ to lie in the affine chart of $\RP^2$ then all the $Z'_i$ in the transformed vector configuration are positive, so they can be brought back to the form $(x,y,1)$ by a positive rescaling.
\end{proof}

Our next result is essentially~\cite[Theorem 11.3.3]{Whiteley} and shows how  examples and properties of cofactor rigidity in dimension $d$ can be lifted to dimension $d+1$ via coning. 
Recall that the contraction $\p/p_i$ of a vector configuration at an element $p_i$ was defined in Section \ref{sec:polytopal}.

\begin{proposition}[Coning Theorem, \protect{\cite[Theorem 11.3.3]{Whiteley}}]
	\label{prop:coning-cofactor}
	Let $\Q=(Q_1,\dots$, $Q_{n+1})$ be a vector configuration in general position in $\R^3$.
	Then, $\CC_d(Q_1,\dots,Q_n)$ is the contraction to $\binom{[n]}{2}$ of the matroid 
	$\CC_{d+1}(\Q)$. If the vectors are in convex position, the same is true for the oriented matroids.
\end{proposition}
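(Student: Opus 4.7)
The matroid version of this statement is Theorem 11.3.3 in~\cite{Whiteley}; we outline a proof that also yields the oriented version under convex position. By the projective invariance from Proposition~\ref{prop:invariance}, after a linear transformation of $\R^3$ and individual rescalings (which are positive, and hence preserve the oriented matroid, when the points are in convex position and their $Z$-coordinates have a common sign) we may assume $Q_{n+1}=(0,0,1)$ and $Z_i=1$ for all $i\le n$. Let $S=\{\{i,n+1\}:i\le n\}$ be the set of $n$ edges incident to $n+1$; the strategy is to row/column reduce $C_{d+1}(\Q)$ so that the rows in $S$ become easily handled and the remaining rows, after quotienting by them, coincide with $C_d(Q_1,\dots,Q_n)$ up to invertible column operations.

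Concretely, for each $i\le n$ we change the basis of the $(d+1)$-dimensional polynomial space $C_d^{d-1}(Q_i)$ used in block $i$ from the monomial basis to $\{\ell_{i,n+1}^{d-k}(\ell'_i)^k\}_{k=0}^{d}$, where $\ell_{i,n+1}$ is a linear form (unique up to scalar) vanishing on the line spanned by $Q_i$ and $Q_{n+1}$, and $\ell'_i$ is any linear form vanishing at $Q_i$ but not at $Q_{n+1}$. Since $\ell_{i,n+1}(Q_{n+1})=0$, the row $\{i,n+1\}$ has in this new basis only its last entry in block $i$ nonzero, equal to $\ell'_i(Q_{n+1})^d$. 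Using these $n$ rows we perform Gaussian elimination to zero out the last column of block $i$ in every row $\{i,j\}$ with $j\le n$. Afterwards, block $i$ of row $\{i,j\}$ has first $d$ entries $\{\ell_{i,n+1}(Q_j)^{d-k}\ell'_i(Q_j)^k\}_{k=0}^{d-1}$, which factor as $\ell_{i,n+1}(Q_j)$ times the evaluation at $Q_j$ of the basis $\{\ell_{i,n+1}^{d-1-k}(\ell'_i)^k\}_{k=0}^{d-1}$ of $C_{d-1}^{d-2}(Q_i)$. Contraction by $S$ amounts to projecting onto the complement of the ``last entries of blocks $1,\dots,n$'', and after rescaling row $\{i,j\}$ by $1/\ell_{i,n+1}(Q_j)$ we obtain, on the first $dn$ coordinates, the matrix $C_d(Q_1,\dots,Q_n)$ up to an invertible column change of basis. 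The rank identity $(d+1)(n+1)-\binom{d+2}{2}-n=dn-\binom{d+1}{2}$ shows that no rank is lost in this projection, so the contracted matroid coincides with $\CC_d(Q_1,\dots,Q_n)$.

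For the oriented matroid version we track signs through the reduction. The scalar $\ell_{i,n+1}(Q_j)$ equals, up to a fixed nonzero factor depending only on the choice of $\ell_{i,n+1}$, the $3\times 3$ determinant $\det(Q_i,Q_{n+1},Q_j)$, which has constant sign for all pairs $i<j$ when $Q_1,\dots,Q_{n+1}$ are in convex cyclic position. A parallel consistency can be arranged for $\ell'_i(Q_{n+1})$ by scaling each $\ell'_i$. Hence every row rescaling in the reduction shares a common sign, producing at worst a global reorientation that is absorbed into the choice of column basis. The main technical obstacle is precisely this sign bookkeeping: one must verify that $\ell_{i,n+1}$ and $\ell'_i$ can be chosen so that every factor appearing in the reduction has a uniform sign, and convex position is the combinatorial hypothesis that makes this possible.
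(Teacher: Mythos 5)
Your approach is in the same spirit as the paper's (normalize $Q_{n+1}$, show the star edges at $n+1$ contract away cleanly, identify the residue with $C_d(Q_1,\dots,Q_n)$ up to row rescalings, and check these rescalings have uniform sign under convex position), but you take a different normalization and this creates extra work that you do not finish. The paper sends $Q_{n+1}$ to the point at infinity $(0,1,0)$, which makes $\cc_{i,n+1}=(0,\dots,0,\pm1)$ already in the monomial basis, with no basis change required; you put $Q_{n+1}=(0,0,1)$ (the affine origin), which forces the basis change to $\{\ell_{i,n+1}^{d-k}(\ell_i')^k\}_k$ and the attendant sign bookkeeping that you explicitly leave as ``one must verify.'' That bookkeeping does in fact close (with $\ell_{i,n+1}(\cdot)=\det(Q_i,Q_{n+1},\cdot)$ one has $\ell_{i,n+1}(Q_j)=-\ell_{j,n+1}(Q_i)$, and the extra $-1$ is exactly what converts the $(-1)^{d+1}$ block-$j$ sign of $C_{d+1}$ into the $(-1)^d$ sign of $C_d$), but you should carry it out rather than flag it as an obstacle.

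There is, however, a genuine gap earlier: you say ``contraction by $S$ amounts to projecting onto the complement of the `last entries of blocks $1,\dots,n$','' but that is false as stated, because after your basis change the rows $\{i,n+1\}$ still have nonzero entries in block $n+1$ (namely $-\cc_{i,n+1}$, which is the evaluation of the block-$(n{+}1)$ basis at $Q_i$ and does not vanish). You must first discard block $n+1$ entirely, which is legitimate because the $n+1$ column blocks of $C_{d+1}(\Q)$ sum to zero in every row, so deleting one block does not change the (oriented) matroid of rows. The paper makes this step explicit; without it your Gaussian elimination does not reduce the rows $\{i,n+1\}$ to singletons, and the identification of the contraction with the ``first $dn$ coordinates'' fails. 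A smaller but related omission is that you need the linear transformation to place the other points with $Z_i$ of a uniform sign (not merely to observe that uniform sign would suffice); the paper achieves this by mapping the supporting line of $\conv(\q)$ through $q_{n+1}$ to $Z=0$.
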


Let us mention that  the same result holds for the other two forms of rigidity, $\RR$ and $\HH$~\cite[Theorem 9.3.11]{Whiteley}, and \cite[Theorem 5.1]{Kalai}.
We call this statement ``coning theorem'' because it implies that a graph $G$ with vertex set $[n]$ is $d$-independent or $d$-rigid when realized on $(Q_1,\dots,Q_n)$ if and only if its cone $G*\{n+1\}$ is $(d+1)$-independent or $(d+1)$-rigid on $(Q_1,\dots,Q_n, Q_{n+1})$. Here, the \emph{cone over a graph} $G=([n],E)$ is defined as the graph with vertex set $[n+1]$ and with edges 
\[
E*\{n+1\}:=E\cup \left\{\{i,n+1\}: i\in [n]\right\}.
\]

\begin{proof}
By a linear transformation we can assume without loss of generality that $Q_{n+1} = (0,1,0)$ and that no other $Q_i$ lies in the ``hyperplane at infinity'' $\{Z=0\}$; hence, we can rescale them to have $Z_i=1$ for $i=1,\dots,n$.
	This linear transformation and rescaling do not affect the matroids. Moreover, if the original vectors are in convex position, all of them are in a half-space whose delimiting plane contains $Q_{n+1}$. This implies that  we can further assume that the linear transformation sends this plane to $Z_i=0$ and after this step $Z_i>0$ for every $i$, so that the rescaling is positive and does not affect the oriented matroids either.
	
Under these assumptions we have that
	\[
	\cc_{i,n+1} =(0,0,\dots, (-1)^{d-1}).
	\]
In particular, the contraction of the elements $\{i,n+1\}$ in the matroid $\CC_{d+1}(\Q)$ can be performed in the matrix $C_{d+1}(\Q)$ as follows: first, forget the last block of columns (the one corresponding to $Q_{n+1}$). This does not affect the oriented matroid since the sum of the $n$ blocks of $\CC_{d+1}(\Q)$ equals zero (that is, the columns in each one block are linear combinations of the other blocks).
After the block of $Q_{n+1}$ is deleted, the rows $\{i,n+1\}$ that we want to contract have a single non-zero entry, so the contraction is equivalent to deleting those rows and their corresponding columns, namely the last column in the block of each $Q_i$, $i=1,\dots,n$. 

The resulting matrix coincides with 
$C_{d}(Q_1,\dots, Q_{n})$ except that the row corresponding to each edge $\{i,j\}$ has been multiplied by the factor $x_{ij}:= X_iZ_j-Z_iX_j = X_i-X_j$. General position (under the assumption $Q_{n+1}=(0,1,0)$ and $Z_i=1$ for every other $i$) implies $X_i\ne X_j$ for $i\ne j$, so this factor $x_{ij}$ does not affect the matroid.

The factor could a priori affect the oriented matroid, but our assumption that the vectors are in convex position with $Q_{n+1}=(0,1,0)$ and with $Z_1=\dots=Z_n=1$ implies that  $X_1< \dots<X_n$. Hence, the spurious factors  $x_{ij}$ are all of the same sign (all negative) and do not change the oriented matroid. 
\end{proof}

We now look at what happens if the point we add/delete is not the last one $Q_{n+1}$ but an intermediate one $Q_i$. This is a mere cyclic reordering of the points with respect to the previous result, but reordering has a non-trivial effect in the cofactor matrix, because of a lack of symmetry in its definition. Indeed, the row of an edge $\{i,j\}$ with $i < j$ has the  shape
\[
(\dots, \cc_{ij}, \dots, -\cc_{ij}, \dots).
\]
If the reordering keeps $i$ before $j$  the row does not change; its entries simply get moved around as indicated by the reordering. In contrast, if after reordering we end up having $j$ before $i$ then the new row equals
\[
(\dots, \cc_{ji}, \dots, -\cc_{ji}, \dots).
\]
That is, we get $\cc_{ji}$ where the ``moving around'' should give $-\cc_{ij}$ and $-\cc_{ji}$ where we should get $ \cc_{ij}$. The effect of this depends on the parity of $d$. If $d$ is even, then $\cc_{ji} = -\cc_{ij}$ and the relabelling does not affect the oriented matroid. If $d$ is odd, however, then $\cc_{ji} = \cc_{ij}$, so the relabelling globally changes the sign of that row of the matrix. This implies:

\begin{proposition}
	\label{prop:coning-cofactor2}
	Let $\Q=(Q_1,\dots,Q_{n+1})$ be vectors in $\R^3$ in general position. 
	Then the oriented matroid $\CC_d(Q_1,\ldots,Q_{i-1}$, $Q_{i+1},\ldots,Q_{n+1})$ is obtained from $\CC_{d+1}(\Q)$ by contracting at the elements $\{i,j\}$ with $j\in [n+1]\setminus \{i\}$, and reorienting the elements $\{j,k\}$ with $1\le j<i<k\le n+1$.
\end{proposition}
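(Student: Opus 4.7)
The plan is to reduce the statement to Proposition~\ref{prop:coning-cofactor} by cyclically relabelling the indices so as to move position $i$ to the last slot. Let $\sigma$ denote the cyclic permutation of $[n+1]$ with $\sigma(i)=n+1$ that preserves cyclic order, and consider the relabelled configuration $\tilde\Q=(Q_{i+1},\ldots,Q_{n+1},Q_1,\ldots,Q_i)$. Since $\sigma$ preserves cyclic order, $\tilde\Q$ inherits general (and convex) position from $\Q$. Proposition~\ref{prop:coning-cofactor} applied to $\tilde\Q$ shows that contracting $\CC_{d+1}(\tilde\Q)$ at the edges containing the last point gives, as an oriented matroid, $\CC_d(\tilde Q_1,\ldots,\tilde Q_n)=\CC_d(Q_{i+1},\ldots,Q_{n+1},Q_1,\ldots,Q_{i-1})$.

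What remains is to compare $\CC_{d+1}(\Q)$ with $\CC_{d+1}(\tilde\Q)$, and $\CC_d(\tilde Q_1,\ldots,\tilde Q_n)$ with $\CC_d(Q_1,\ldots,Q_{i-1},Q_{i+1},\ldots,Q_{n+1})$, under the bijection that identifies each edge with the edge on the same pair of points. I would carry this out row by row. If the relabelling preserves the relative order of the two endpoints, the row is unchanged up to a reordering of its entries. If it swaps the endpoints, then in writing the row according to the convention that the smaller index carries $\cc_{ij}$, one must substitute $\cc_{vu}=(-1)^{\deg}\cc_{uv}$ for $\cc_{uv}$, which amounts to multiplying the whole row by $(-1)^{\deg+1}$; here $\deg$ is the total degree of the entries of $\cc_{ij}$, equal to $d$ in $\CC_{d+1}$ and to $d-1$ in $\CC_d$. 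Hence the first relabelling causes a reorientation of the swapped rows exactly when $d$ is even, and the second exactly when $d$ is odd.

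The crucial observation is that the set of edges whose endpoints get their order swapped by the cyclic shift is the same in both steps: it consists precisely of the edges $\{j,k\}$ with $1\le j<i<k\le n+1$. Since $d$ and $d-1$ have opposite parities, exactly one of the two relabellings causes a sign change on each such edge, and the two effects compose to a single reorientation independently of the parity of $d$. The edges containing $i$ are contracted in the intermediate step, so any sign change on them is immaterial.

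The main bookkeeping challenge, and what I expect to be the only delicate point, is verifying that the two parity-dependent sign flips always combine to exactly one reorientation, which is the content of the complementary parities of $d$ and $d-1$; once this is established the result follows immediately from the previous proposition.
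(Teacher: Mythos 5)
Your proposal is correct and follows essentially the same route as the paper: cyclically relabel so that index $i$ becomes $n+1$, apply Proposition~\ref{prop:coning-cofactor}, relabel back, and observe that the two relabellings (in dimensions $d+1$ and $d$, i.e., with entry degrees $d$ and $d-1$) flip signs under conditions of opposite parity, so exactly one of them reorients the edges $\{j,k\}$ with $j<i<k$. The parity analysis you carry out with $(-1)^{\deg+1}$ is precisely the paper's observation that ``exactly one of them is odd''.
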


\begin{proof}
	Let us first relabel points cyclically  so that the point $i$ becomes $n+1$, then apply Proposition~\ref{prop:coning-cofactor}, and finally relabel the points back to their original labels. As noted above, relabelling does nothing if the dimension is even or to the edges that keep their direction, but it reorients the edges that change their direction (that is, the edges $\{j,k\}$ with $j<i<k$) if the dimension is odd. Since we are relabelling first in dimension $d+1$ and then in dimension $d$, exactly one of them is odd.
\end{proof}

Now we prove a result about the number of sign changes in any dependence in $\CC_d(\Q)$, with elements in convex position:

\begin{lemma}\label{lemma:chsign}
	Let $\Q=(Q_1,\dots,Q_n)$ be vectors in convex position. Let $\lambda\in \R^{\binom{n}{2}}$ be a linear dependence among the rows of $C_d(\Q)$.
For each $i$, considered the sequence formed by $\{\lambda_{ij}\}_{j\ne i}$, with values of $j$ ordered cyclically from $i$. That is, with the order $(i+1,i+2,\dots,n,1,\dots,i-1)$. Then:
\begin{enumerate}
\item If $d$ is even, the sequence changes sign at least $d$ times.
\item If $d$ is odd, the same happens for the sequence $\{-\lambda_{ij}\}_{j>i}\cup \{\lambda_{ij}\}_{j<i}$.
\end{enumerate}
\end{lemma}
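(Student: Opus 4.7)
The plan is to translate the dependence into a polynomial identity in two variables and then apply a classical Chebyshev-type sign-change argument. Using $\cc_{ji}=(-1)^{d-1}\cc_{ij}$ (which follows immediately from $x_{ji}=-x_{ij}$, $y_{ji}=-y_{ij}$ and the definition of $\cc_{ij}$), the restriction of the dependence to the block of $d$ columns indexed by $Q_i$ rewrites as
\[
\sum_{j\ne i}\mu_{ij}\,\cc_{ij}=0,
\]
where $\mu_{ij}=\lambda_{\{i,j\}}$ if $d$ is even and $\mu_{ij}=\operatorname{sgn}(j-i)\,\lambda_{\{i,j\}}$ if $d$ is odd. In either case the sequence $(\mu_{ij})_{j\ne i}$ read cyclically from $i$ coincides (up to a global sign) with the sequence appearing in the corresponding part of the statement, so it suffices to show it has at least $d$ sign changes. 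Since the components of $\cc_{ij}$ are the monomials $x_{ij}^a y_{ij}^b$ with $a+b=d-1$, this vector equation is equivalent to the scalar identity
\[
\sum_{j\ne i}\mu_{ij}\prod_{k=1}^{d-1}\phi_k(x_{ij},y_{ij})=0
\]
for every choice of $d-1$ real linear forms $\phi_1,\dots,\phi_{d-1}$ in two variables.

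Writing $\phi_k=\alpha_kX+\beta_kY$, a direct computation gives $\phi_k(x_{ij},y_{ij})=\phi_k(Q_i)Z_j-Z_i\phi_k(Q_j)$, which is a linear function of $Q_j$ vanishing precisely when $Q_j$ lies on a specific projective line $\ell_k$ through $Q_i$. Conversely, for any point $Q_m\ne Q_i$ there is a unique $\phi_k$ (up to scalar) for which $\ell_k$ is the line through $Q_i$ and $Q_m$. Convex (hence general) position of $\Q$ guarantees that $\ell_k$ meets the points only at $Q_i$ and $Q_m$, splitting the remaining vertices into two arcs lying on opposite sides of $\ell_k$. Consequently $j\mapsto\phi_k(x_{ij},y_{ij})$ exhibits exactly one sign change at $j=m$ when read in the linear order $(i+1,i+2,\dots,i-1)$, and the product $\Phi(j):=\prod_k\phi_k(x_{ij},y_{ij})$ has sign changes precisely at the chosen points $m_1<\cdots<m_{d-1}$, with overall sign we can freely adjust by replacing any $\phi_k$ by $-\phi_k$.

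Now suppose for contradiction that $(\mu_{ij})_j$ has only $s\le d-1$ sign changes, located between positions $c_1<\cdots<c_s$. Set $m_k=c_k$ for $k=1,\dots,s$, and distribute the remaining $d-1-s$ values as follows: insert them as pairs of consecutive integers placed in regions where $\mu$ has constant sign (each such pair produces two back-to-back sign flips of $\Phi$ with no intermediate $j$, so the effective sign pattern of $\Phi$ outside the pair is unchanged); and, if $d-1-s$ is odd, add one boundary value $m=i+1$, whose sign change falls outside the linear order and contributes only a harmless global sign that we absorb by flipping one $\phi_k$. After a final adjustment of the global sign, $\Phi(j)$ matches the sign of $\mu_{ij}$ at every $j\notin\{m_1,\dots,m_{d-1}\}$, so $\mu_{ij}\Phi(j)\ge 0$ everywhere and is strictly positive wherever both factors are nonzero. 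Provided the restriction of $\lambda$ to edges incident to $i$ is not identically zero, this contradicts $\sum_j\mu_{ij}\Phi(j)=0$. I expect the main subtlety to be the sign bookkeeping in this last step across the various parities of $d$ and $s$; the collateral fact that products of $d-1$ real linear forms in two variables span the whole $d$-dimensional space of degree-$(d-1)$ homogeneous polynomials is immediate, since the monomial basis $x^ay^b$ is already of that form.
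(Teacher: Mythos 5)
Your proof is correct and, at its core, is the same argument as the paper's: both restrict the dependence to the $d$ columns of vertex $i$, observe that the resulting vectors $\cc_{ij}$ form (up to positive rescaling) a moment-curve / cyclic-polytope configuration ordered by the angular order around $Q_i$, and conclude that a dependence among such vectors must have at least $d$ sign changes. The difference is mostly in packaging: the paper applies a projective normalization that sends $Q_i$ to infinity, identifies the $\cc_{ij}$ with moment-curve points indexed by slope, and then invokes the classical facts that circuits of a cyclic polytope alternate and that every dependence is a conformal composition of circuits. You instead keep the points in place and prove the sign-change bound directly by a Chebyshev/Descartes argument: if $\mu$ had at most $d-1$ sign changes you construct a product $\Phi$ of $d-1$ linear forms through $Q_i$ whose sign tracks $\mu$, so that $\sum_j \mu_{ij}\Phi(j)>0$, contradicting the vanishing of the paired sums. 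This buys you a self-contained proof that does not lean on cyclic-polytope references, at the cost of more delicate sign bookkeeping.

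Two small remarks. First, your construction of $\Phi$ can be simplified: rather than placing roots exactly at positions $m_k$ (which forces the pairs-of-consecutive-integers and boundary tricks), you can place the $s$ ``matching'' roots strictly between consecutive positions $j$ where $\mu$ changes sign, and send the remaining $d-1-s$ roots to directions through $Q_i$ lying outside the angular arc spanned by the $Q_j$, $j\ne i$ (such directions exist because that arc has angle strictly less than $\pi$ by convexity). Each such form has constant sign on all positions and contributes no sign change, which removes the parity casework. Second, your caveat ``provided the restriction of $\lambda$ to edges incident to $i$ is not identically zero'' can be sharpened: if that restriction is nonzero, then by the Vandermonde-type independence of any $d$ of the vectors $\cc_{ij}$ (distinct slopes by convex position), it must have at least $d+1$ nonzero entries, so $\Phi$, which vanishes at at most $d-1$ positions, cannot kill all of them. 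This closes the strict-positivity step cleanly; and in the genuinely degenerate case where $\lambda$ vanishes on all edges through $i$, both the lemma and the paper's proof are vacuous, so no additional argument is needed.
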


\begin{proof}
Let us first assume that $d$ is even. In this case, as mentioned above, a cyclic relabelling does not change the oriented matroid, so we can assume without loss of generality that $i=n$. Also, by linear transformation and positive rescaling we can assume that  $Q_n=(0,0,1)$ and that $Z_j=1$ and $X_j >0$ for $j\ne i$. 
Observe that under these assumptions we have
\[
\cc_{jn} = (X_j^{d-1},  X_j^{d-2} Y_j,  \dots,  Y_j^{d-1}) = X_j^{d-1} (1, m_j,\dots, m_j^{d-1}),
\]
where $m_j:= Y_j/X_j$ is the slope of the segment from $q_n=(0,0)$ to $q_j=(X_j,Y_j)$. Since the $X_j$ are positive we can neglect them without changing the oriented matroid, and convex position implies that the $m_j$ are increasing: $m_1 < \dots < m_{n-1}$.

Hence, the sequence $(\lambda_{jn})_{j\in [n-1]}$ that we want to study is (at least regarding its signs) a linear dependence among the vectors $(1, m_j,\dots, m_j^{d-1})$ for an increasing sequence of $m_j$'s. Put differently, it is an affine dependence among the vertices of a cyclic $(d-1)$-polytope. It is well-known that the circuits in the cyclic polytope are alternating sequences with $d+1$ points~\cite[Theorem 6.1.11]{triangbook}, hence they have $d$ sign changes. Since every dependence is a composition of circuits~\cite[Lemma 4.1.12, Corollary 4.1.13]{triangbook}, hence it has at least the same number of changes.

For the case where $d$ is odd all of the above remains true except the initial cyclic relabelling reverts the sign of all the $\lambda_{ij}$ with $j>i$.
\end{proof}

\subsection{The Morgan-Scott obstruction in cofactor rigidity}
\label{sec:desargues}

In this section we show that the graph obtained from $K_6$ by removing a perfect matching (that is, the graph of an octahedron) is  a circuit or a basis in the three-dimensional cofactor rigidity $\CC_3$, depending on whether the points are in ``Desargues position'' or not. This is well-known in the theory of splines, and usually called the \emph{Morgan-Scott split} or Morgan-Scott configuration~\cite{MorganScott}. 
We here rework it, following \cite[Example 4]{Whiteley-splines}, since we need an oriented version of it. See also \cite[Example 41, p. 90]{NSW21}.

\begin{definition}[Desargues position]
Let $\q=(q_1,\dots,q_6)$ be a configuration of six points in convex position in the plane. Let us call upper side of the line $25$ the side containing the points $1$ and $6$, and lower side the other one. We say  that $\q$ is \emph{positively (resp. negatively) oriented} if the intersection of the lines $14$ and $36$ lies in the lower (resp. upper) side of $25$. We say that $\q$ is in \emph{Desargues position} if none of the two happens, that is, if the lines $14$, $25$ and $36$ are concurrent.
\end{definition}

See Figure~\ref{fig:desargues} for an illustration of this concept, with points chosen along the standard parabola.
We call the concurrent case \emph{Desargues position} since Desargues theorem says that this concurrency is equivalent to the triangles $q_1q_3q_5$ and $q_2q_4q_6$ being axially perspective.

\begin{figure}[htb]
\includegraphics[width=6.5cm]{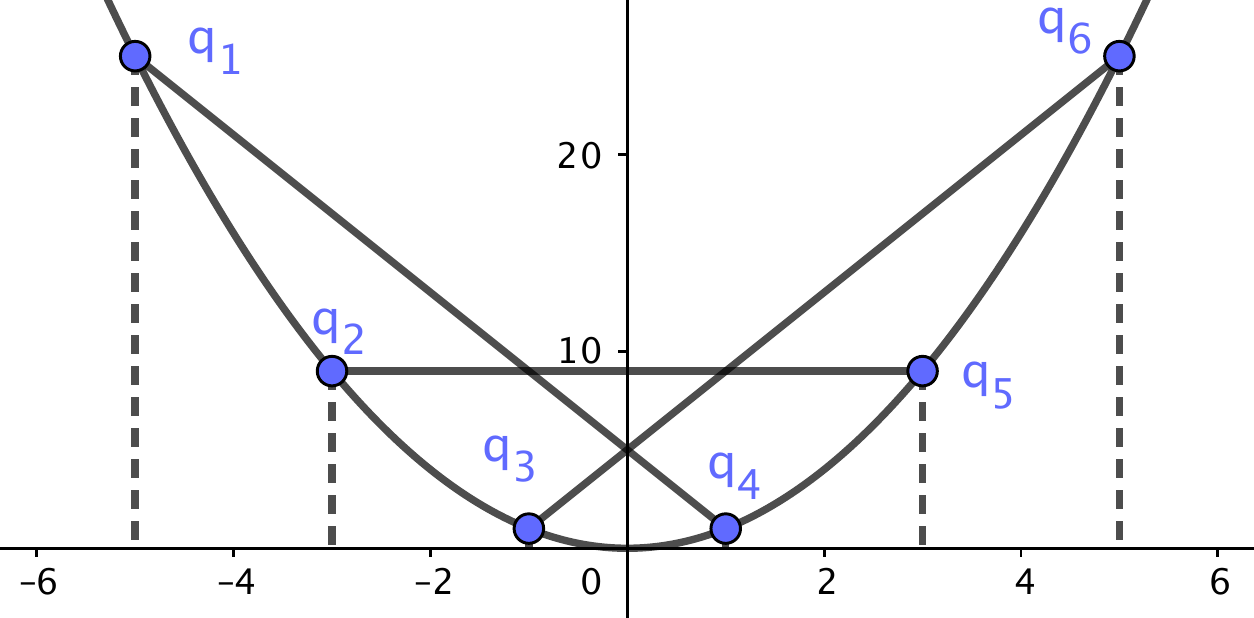}
\qquad
\includegraphics[width=5cm]{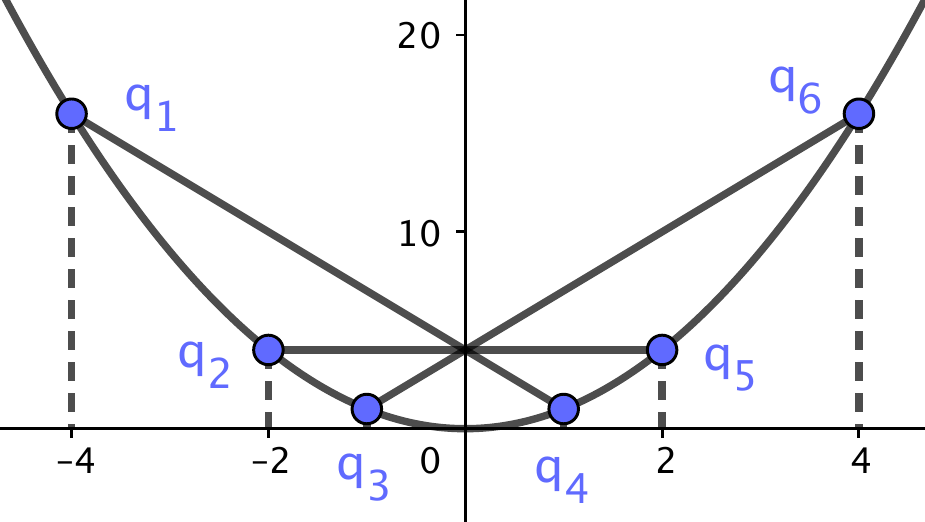}
\caption{Two configurations of six points in convex position, chosen along the parabola. The configuration in the left is positively oriented, the one on the right is in Desargues position}

\label{fig:desargues}
\end{figure}

\begin{theorem}
\label{thm:cofact}
	Consider the graph $G= K_6\setminus\{25,36\}$ embedded with six points $\q$ in general position. Then, 
	\begin{enumerate}
	\item $G$ is spanning in  $\CC_3(q)$, hence it contains a unique dependence. This dependence may not vanish at any edge other than $14$.
	\item Assume $\q$ in convex position, and let $(\lambda_{ij})_{ij}\in \R^{\binom{6}{2}}$ be this unique dependence. Then $\lambda_{15}\ne 0$ and the sign of $\lambda_{14}/\lambda_{15}$ is positive, negative, or zero, if $\q$ is positively oriented, negatively oriented, or in Desargues position, respectively.
	\end{enumerate}
\end{theorem}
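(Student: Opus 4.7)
\emph{Proof sketch.} The plan is to handle part~(1) via the structure of the Morgan-Scott / octahedron configuration, and part~(2) via Lemma~\ref{lemma:chsign} together with a continuity argument for the single remaining sign.

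For part~(1), observe that $G\setminus\{14\}=K_6\setminus\{14,25,36\}$ is the octahedron. By the classical Morgan-Scott split (which I would recall), the octahedron is a basis of $\CC_3(\q)$ exactly when the three long diagonals $14,25,36$ are not concurrent, and a circuit of rank $11$ otherwise. In both cases $G$ has rank $12$: in the non-Desargues case because adding $14$ to a basis yields a rank-$12$ spanning set, and in the Desargues case because edge $14$ is still linearly independent of the octahedron's rank-$11$ span (a short rank/deformation argument; this is the most delicate point of part~(1)). Thus $G$ contains a unique dependence $\lambda$ up to scalar, and $\lambda_{14}=0$ precisely in Desargues position (when $\lambda$ restricts to the octahedron circuit). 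For any edge $e\ne 14$, non-vanishing $\lambda_e\ne 0$ follows by showing that $G\setminus\{e\}=K_6\setminus\{25,36,e\}$ is a basis in general position: the triple $\{25,36,e\}$ forms a perfect matching of $K_6$ if and only if $e=14$, so no Morgan-Scott obstruction can arise for other $e$, and a single generic determinant certifies independence.

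For part~(2), I would apply Lemma~\ref{lemma:chsign} with $d=3$ at every vertex. At each vertex the signed length-$5$ sequence must change sign at least three times. With the forced zeros $\lambda_{25}=\lambda_{36}=0$, the sequences at vertices $2$, $3$, $5$, $6$ contain only four nonzero entries and must therefore strictly alternate. Normalizing $\lambda_{12}>0$ and propagating this alternation around those four vertices pins down every sign $\sign(\lambda_{ij})$ with $ij\ne 14$: one obtains $\lambda_{12},\lambda_{15},\lambda_{23},\lambda_{26},\lambda_{34},\lambda_{45},\lambda_{56}>0$ and $\lambda_{13},\lambda_{16},\lambda_{24},\lambda_{35},\lambda_{46}<0$. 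In particular $\lambda_{15}>0$, so $\lambda_{15}\ne 0$. The vertex-$1$ and vertex-$4$ constraints (where no forced zero appears) turn out to be compatible with any sign for $\lambda_{14}$, including zero, so this part of the analysis alone cannot determine $\sign(\lambda_{14})$.

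To identify $\sign(\lambda_{14})$, I would use a continuity argument. Since $\lambda_{14}$ (with the normalization $\lambda_{12}>0$) depends continuously on $\q$ in general convex position and vanishes exactly on the Desargues locus by part~(1), its sign is constant on each connected component of the complement of that locus within the space of convex $6$-point configurations. One then checks that this complement has exactly two components, namely the positively and negatively oriented classes, and verifies by a single explicit computation at one representative of each class (for instance six points on the standard parabola, which is positively oriented and for which the computation gives $\lambda_{14}>0$) that the sign of $\lambda_{14}/\lambda_{15}$ matches the orientation as stated. The main obstacle I anticipate is the Desargues-limit argument in part~(1), namely proving rigorously that edge $14$ remains independent of the rank-$11$ octahedron so that $G$ still has rank $12$ (and hence a unique dependence with $\lambda_{14}=0$) in that limit; a secondary technical point is the two-component topology claim for non-Desargues convex configurations, which can be handled either by a direct connectivity analysis or by using that Desargues is the zero set of a single polynomial on the connected convex-configuration space.
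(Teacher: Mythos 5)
Your proposal takes a genuinely different route from the paper on both parts, and while the high-level plan is sensible, there are concrete gaps at the places you flag as delicate.

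For part~(1), the paper does not invoke Morgan--Scott at all. It observes that for any edge $e\ne\{1,4\}$, one can pick an endpoint $i$ of $e$ with $i\notin\{1,4\}$; then $i$ has degree~$3$ in $G\setminus e$, and $G\setminus e\setminus i$ is $K_5$ minus one edge, which is a basis of $\CC_3$ on five general-position points (since $K_5$ is always a circuit there). Re-attaching the degree-$3$ vertex is a $0$-extension, which preserves independence in $\CC_3$ for \emph{any} general-position point, so $G\setminus e$ is a basis and the dependence cannot vanish at $e$. This handles all general-position configurations uniformly, Desargues or not. Your version has two gaps here. First, the claim that in Desargues position the row of edge~$14$ remains outside the rank-$11$ span of the octahedron's rows is genuinely nontrivial and is the crux of the statement; you acknowledge this, but ``a short rank/deformation argument'' would need to be spelled out, and a deformation argument is actually delicate because rank can drop in limits. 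Second, ``a single generic determinant certifies independence'' only shows that $G\setminus e$ is a basis for \emph{generic} $\q$, not for all general-position $\q$, which is what the statement requires (and what the degree-$3$ deletion argument delivers).

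For part~(2), your observation that Lemma~\ref{lemma:chsign} with $d=3$, applied at vertices $2,3,5,6$ (where the forced zeros $\lambda_{25}=\lambda_{36}=0$ reduce the cyclic sequence to four entries that must strictly alternate), pins down every sign except $\sign(\lambda_{14})$ is correct and is a nice structural shortcut; the paper does not use it in the proof of this theorem (it instead computes the complementary minors of the rows of $14$ and $15$ directly via $3$-fans, simplifies with Pl\"ucker relations, and reads off the sign from a ratio of sines). Your continuity step to determine $\sign(\lambda_{14})$ is where the gap sits. You need that the non-Desargues convex configurations split into exactly two connected components, corresponding to the two orientations. Being the zero set of a single polynomial on a connected space does \emph{not} imply this: $\{f>0\}$ and $\{f<0\}$ on a connected manifold can each be disconnected (already $f(x)=x(x-1)(x-2)$ on $\R$ gives three pieces). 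What you would actually need is a fibration-style connectivity argument (e.g.\ connect the first five points to a fixed reference pentagon, then show the admissible positions of $q_6$ with a given sign form an interval), or to fall back to an explicit algebraic identity comparing $\lambda_{14}$ to the Desargues discriminant -- which is essentially the computation the paper performs. As written, the topological claim is asserted rather than proved, and it is doing all the work in your part~(2).

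In terms of trade-offs: your chsign-propagation in part~(2) is more conceptual and shorter than the paper's explicit $3$-fan/Pl\"ucker computation, but it hands off the hard content to a topology-of-configuration-spaces claim that is not free. The paper's computation is heavier but self-contained and immediately yields the precise Desargues expression. For part~(1) the paper's degree-$3$ deletion argument is both shorter and stronger (valid at all general-position configurations), and I would recommend replacing your Morgan--Scott-plus-limit argument with it.
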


This statement immediately implies:

\begin{corollary} 
	\label{coro:cofact}
	Consider the graph $G= K_6\setminus\{14,25,36\}$ embedded with six points $\q$ in convex position. 
	Then, $G$ is a circuit in $\CC_3(q)$ if the points are in Desargues position, and a basis otherwise.
\end{corollary}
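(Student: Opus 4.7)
The plan is to derive this as a direct consequence of Theorem~\ref{thm:cofact}, which has essentially already done the work. Let me set up the bookkeeping carefully. Write $G'= K_6\setminus\{25,36\}$, so that $G'$ has $\binom{6}{2}-2=13$ edges, and $G= G'\setminus\{14\}$ has $12$ edges. From the rank formula for abstract rigidity matroids in dimension $d=3$ on $n=6$ elements we have $\operatorname{rk}\CC_3(\q)=3\cdot 6-\binom{4}{2}=12$, so $G$ is a basis of $\CC_3(\q)$ if and only if it is independent, and it is a circuit if and only if it is dependent (since the nullity of the full matroid restricted to $G$ cannot exceed one without forcing the rank of $G'=G\cup\{14\}$ to drop below $12$).

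Next I would apply Theorem~\ref{thm:cofact}(1): $G'$ is spanning, hence has nullity one, and the unique (up to scalar) linear dependence $\lambda=(\lambda_{ij})$ among its rows is supported everywhere on $G'$ except possibly at the edge $14$. This immediately splits the analysis into two cases according to whether $\lambda_{14}=0$ or not. If $\lambda_{14}\ne 0$, then the support of $\lambda$ uses the edge $14$, and removing that edge from $G'$ removes the support of the only dependence. Hence $G=G'\setminus\{14\}$ is independent of size $12$ in a rank-$12$ matroid, i.e.\ a basis. If instead $\lambda_{14}=0$, then $\lambda$ is supported in $G$, so $G$ is dependent; but by the ``may not vanish at any edge other than $14$'' clause, $\lambda$ is in fact nonzero on every edge of $G$, so $G$ itself is the support of a circuit of size $12$. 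Therefore $G$ is a circuit.

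Finally, Theorem~\ref{thm:cofact}(2) tells us exactly when the first case or the second case occurs: assuming $\q$ is in convex position, $\lambda_{15}\ne 0$, and the sign of $\lambda_{14}/\lambda_{15}$ is zero precisely when $\q$ is in Desargues position (and positive or negative otherwise). Combining, $\lambda_{14}=0$ if and only if $\q$ is in Desargues position, which by the case analysis above is equivalent to $G$ being a circuit in $\CC_3(\q)$; otherwise $G$ is a basis. This completes the proof.

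I do not anticipate any real obstacle here: the genuine content lies in Theorem~\ref{thm:cofact}, and the corollary is little more than a matroidal translation plus a sign reading. The only point deserving care is observing that the ``may not vanish at any edge other than $14$'' conclusion of part~(1) is what guarantees that the dependence in the second case has full support $G$ rather than a proper subset of it, so that we really get a circuit on the nose rather than merely a dependent set of size $12$.
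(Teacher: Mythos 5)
Your proposal is correct and matches the paper's intent; the paper simply asserts that Corollary~\ref{coro:cofact} ``immediately'' follows from Theorem~\ref{thm:cofact}, and your argument is exactly the bookkeeping one would write out to make this explicit: the rank count $3\cdot 6-\binom{4}{2}=12$, the dichotomy on $\lambda_{14}$, and the full-support observation ensuring that in the degenerate case $G$ is genuinely a circuit rather than merely dependent. One small presentational nit: the early parenthetical (``it is a circuit if and only if it is dependent, since the nullity cannot exceed one'') is not by itself a justification of that equivalence --- nullity one only gives a unique circuit inside $G$, not that $G$ equals that circuit --- but you correctly supply the missing ingredient (the dependence does not vanish on any edge of $G$) later in the argument, so the proof as a whole is sound.
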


The proof of the first part of Theorem~\ref{thm:cofact} is easy. Let $G'=G\setminus \{ij\}$ for an edge $ij$ different from $14$. Without loss of generality assume $i\not\in\{1,4\}$. Then $G'$ has degree three at vertex $i$ and $G'\setminus i$ equals $K_5$ minus one edge. Since $K_5$ is a circuit in $\CC_3$ (for any choice of points in general position), $G'\setminus i$ is a basis, and hence $G'$ is a basis too.  In particular, $G$ is spanning and contains a unique circuit, and this circuit does not vanish at the edge $ij$.

To prove part two, we follow the derivation in \cite[Example 4]{Whiteley-splines}. There, the following concept is introduced as a way to express  the determinant of a submatrix in the cofactor matrix $C_3$ of a triangulated sphere.

\begin{definition}[3-fan]
	Given a graph $G=([n],E_0)$ and a bipartition of the vertices $[n]=V_0\cup V_1$, a \emph{3-fan} in $(V_0,V_1,E_0)$ is an orientation of $G$ such that the vertices in $V_0$ have out-degree 3 and those in $V_1$ have out-degree 0.
	
	For a 3-fan $\pi$ and a vertex $i\in V_0$, let $\pi^i$ be the set of three edges that start at the vertex $i$. The \emph{sign of $\pi$}, denoted $\sigma(\pi)$, is the sign of $(\pi^1,\pi^2,\ldots)$ as a permutation of $E_0$, with the three elements of each $\pi^i$ in increasing order, multiplied by $(-1)^r$ where $r$ is the number of edges oriented from a vertex to another with lower index.
\end{definition}

In what follows, we will denote by $C_d(\q)|_{(E,V)}$ the restriction of the cofactor matrix of $\q$ to the rows indexed by $E$ and the column blocks indexed by $V$.

\begin{definition}[Notation \protect{$[q_i;q_jq_kq_l]$}]
	For $\q:V\to\R^2$, we define $[q_i;q_jq_kq_l]=\det C_3(\q)|_{(\{(i,j),(i,k),(i,l)\},i)}$. This determinant can be shown to be equal to $|q_iq_jq_k|\cdot|q_iq_jq_l|\cdot|q_iq_kq_l|$, where $|abc|$ denotes the determinant of the three points (written as $(x,y,1)$).
\end{definition}

The following statement summarizes the derivations in \cite[pp.15--17]{Whiteley-splines}:

\begin{lemma}\label{lemma:detfans}
	Let $(V,E)$ be the graph of a triangulated sphere and $\q:V\to\R^2$ a position for the vertices that realizes the graph as planar. Let $V_0$ the set of internal vertices, $V_1$ the three external vertices and $E_0$ the internal edges of the graph.
	Then
	\[\det C_3(\q)|_{(E_0,V_0)}=\sum_{\pi\text{ 3-fan in }\{V_0,V_1,E_0\}}\sigma(\pi)[\pi^1][\pi^2]\ldots[\pi^{n-3}]\]
	where $[\pi^i]$ stands for $[q_i;q_jq_kq_l]$ with $\pi^i=\{(i,j),(i,k),(i,l)\}$.
\end{lemma}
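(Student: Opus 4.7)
The plan is to compute $\det C_3(\q)|_{(E_0,V_0)}$ by a block-structured Leibniz expansion, showing that the non-zero terms are exactly parametrized by 3-fans. First I note that the matrix is indeed square: by Euler's formula for a triangulated sphere, $|E_0|=3|V|-6-3=3(|V|-3)=3|V_0|$, and the columns partition naturally into $|V_0|$ blocks of three (one block per internal vertex).

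In the Leibniz sum, a term is non-zero only if each edge $e\in E_0$ is paired with a column lying in the block of one of its endpoints in $V_0$, and the bijection places the three edges assigned to each $v\in V_0$ into the three columns of its block in some order. Reading off ``which endpoint each edge was assigned to'', we obtain an orientation of $E_0$. An edge with both endpoints in $V_0$ can be oriented either way, while an edge with an endpoint in $V_1$ is forced to be oriented from its $V_0$-endpoint; in either case the vertices in $V_1$ receive out-degree $0$, and by the count $|E_0|=3|V_0|$ every $v\in V_0$ receives out-degree exactly $3$. Conversely every 3-fan arises this way. So I can re-index the Leibniz sum as a sum over 3-fans $\pi$, followed by an inner sum over the $\prod_v 3!$ ways to distribute the three out-edges at each $v$ among its three columns.

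The inner sum factors. After reordering the rows of the full matrix so that, for each $v\in V_0$, its three out-edges appear together (in increasing order inside $E_0$), the block-diagonal structure lets the Leibniz sum inside a fixed 3-fan collapse into a product of $3\times 3$ determinants, one per internal vertex. The row reordering contributes the sign of $(\pi^1,\pi^2,\dots,\pi^{n-3})$ as a permutation of $E_0$. The $3\times 3$ determinant at vertex $v$ has rows $\varepsilon_1\cc_{v,j_1},\varepsilon_2\cc_{v,j_2},\varepsilon_3\cc_{v,j_3}$, with $\varepsilon_a=+1$ if $v$ is the smaller-indexed endpoint of the edge $\{v,j_a\}$ and $\varepsilon_a=-1$ otherwise (from the minus sign in front of $\cc_{ij}$ in the $j$-th block of the cofactor matrix when $i<j$). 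Since $\cc_{v,j}$ is symmetric in $v,j$ for $d=3$, pulling out these signs yields $\varepsilon_1\varepsilon_2\varepsilon_3\det(\cc_{v,j_1},\cc_{v,j_2},\cc_{v,j_3})$, which by the identity recalled in the definition of the bracket equals $\varepsilon_1\varepsilon_2\varepsilon_3\,[q_v;q_{j_1}q_{j_2}q_{j_3}]=\varepsilon_1\varepsilon_2\varepsilon_3[\pi^v]$.

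Finally, collecting the $\varepsilon_a$'s across all vertices gives a global factor $(-1)^r$, where $r$ counts the edges in $\pi$ oriented from their larger-indexed to their smaller-indexed endpoint. Multiplying this by the row-reordering sign gives, by definition, $\sigma(\pi)$, yielding the claimed formula. The main obstacle is purely bookkeeping: verifying that the row-reordering sign and the orientation-induced $(-1)^r$ combine to exactly the sign $\sigma(\pi)$ as defined, with no off-by-one errors coming from the convention that the three edges of each $\pi^v$ are listed in increasing order; this is mechanical but must be done carefully.
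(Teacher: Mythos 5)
The paper does not actually prove Lemma~\ref{lemma:detfans}: it only states it and cites Whiteley's splines notes (pp.\ 15--17 of \cite{Whiteley-splines}) for the derivation. Your proof therefore supplies an argument the paper takes as a black box, which is worthwhile. The route you take --- Leibniz-expand $\det C_3(\q)|_{(E_0,V_0)}$, observe that a nonzero term forces each edge to be assigned to a column in the block of one of its $V_0$-endpoints, use the count $|E_0|=3|V_0|$ from Euler's formula to conclude that the induced orientation has out-degree $3$ at every $v\in V_0$ and $0$ at every $v\in V_1$ (i.e.\ is a $3$-fan), and then collapse the inner sum over within-block column assignments into a product of $3\times3$ minors times a row-permutation sign --- is exactly the natural expansion and is, in substance, what Whiteley's derivation does. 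The key structural observations (the block form of $C_3(\q)$, the identification of nonzero Leibniz terms with $3$-fans, the symmetry $\cc_{ji}=\cc_{ij}$ for $d=3$ that lets you pull out the $(-1)^r$ from the minus signs in the larger-indexed block) are all correct.

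The one point that truly needs care --- and you flag it yourself --- is the final sign bookkeeping. You must check that the row-reordering sign times $(-1)^r$ equals $\sigma(\pi)$ \emph{with the same ordering conventions} the paper uses: the definition of $\sigma(\pi)$ sorts the three elements of each $\pi^i$ in increasing order as edges (lexicographic on unordered pairs), while $[q_v;q_{j_1}q_{j_2}q_{j_3}]$ is antisymmetric in $j_1,j_2,j_3$, and these two orderings coincide only when $v$ is consistently the smaller (or consistently the larger) endpoint of its three out-edges. Pinning down one consistent ordering for both the bracket and the permutation sign resolves this; it is routine but not automatic, and skipping it could introduce a stray sign. Modulo that verification, the proof is correct and follows the same approach as the cited source.
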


With this we can finish the proof of Theorem \ref{thm:cofact}:

\begin{proof}[Proof of Theorem \ref{thm:cofact}]
	The coefficients of a row dependence in an $(N+1)\times N$ matrix are the complementary minors of each row with alternating signs. In our case, our initial matrix $C_3(\q)$ has size $13\times 18$ and rank 12, but we can by an affine transformation fix the positions of vertices $1$, $2$ and $3$ (which implies no loss of generality) and then delete the nine columns of their three blocks, plus the three rows of the triangle they form. This leaves us with a $10\times 9$ matrix whose unique row-dependence we want to study.
	The coefficients  $\lambda_{14}$  and $\lambda_{15}$  have the same sign in the dependence if and only if their complementary minors have opposite sign. 
	
	To compute these two signs we use Lemma \ref{lemma:detfans} with $V_0=\{4,5,6\},V_1=\{1,2,3\}$.
	
	For the edge 15 this is easy because the remaining edges form $K_6\setminus\{12,13,23,15$, $25,36\}$, in which the only possible 3-fan is $\{41,42,43,53,54,56,61,62,64\}$. This is an even permutation in which there are 8 ``reversed'' edges, hence the sign of the 3-fan is positive.
	
	By  Lemma \ref{lemma:detfans}, the determinant is
	\[
	[4;123][5;346][6;124]=|412|\,|413|\,|423|\,|534|\,|536|\,|546|\,|612|\,|614|\,|624|
	\]
	where $i$ stands for $q_i$. A determinant of three points is positive if they are ordered counter-clockwise and negative otherwise. In this case the result is positive because there are two negative determinants, 536 and 546. (Here and in the rest of the proof we assume without loss of generality that our points are not only in convex position but also placed in counter-clockwise order along their convex hull).
	
	Now we  compute the determinant for $K_6\setminus\{12,13,23,14, 25,36\}$. There are two 3-fans here:
	\[
	\{42,43,46,51,53,54,61,62,65\}\text{ and }\{42,43,45,51,53,56,61,62,64\} 
	\]
	Both are even permutations, the first one with 8 reversed edges and the second with 7. By Lemma \ref{lemma:detfans} the determinant is 
	\begin{align*}
	&[4;236][5;134][6;125]-[4;235][5;136][6;124] =\\
	=&|423|\,|426|\,|436|\,|513|\,|514|\,|534|\,|612|\,|615|\,|625|  \\ &\qquad\qquad\qquad -   |423|\,|425|\,|435|\,|513|\,|516|\,|536|\,|612|\,|614|\,|624|\\
	=&|126|\,|135|\,|145|\,|156|\,|234|\,|246|\,|256|\,|345|\,|346|   \\ &\qquad\qquad\qquad-   |126|\,|135|\,|146|\,|156|\,|234|\,|245|\,|246|\,|345|\,|356|\\
	=&|126|\,|135|\,|156|\,|234|\,|246|\,|345|\,(|145|\,|256|\,|346|-|146|\,|245|\,|356|)
	\end{align*}
	
	The  factor $|126|\,|135|\,|156|\,|234|\,|246|\,|345|$ is positive: since the points are in convex counter-clockwise position every determinant $|abc|$ with $a<b<c$ is positive. Hence, we ignore it.
	To further simplify the rest we use the Pl\"ucker relations
	\[
	|145|\,|256| = |125|\,|456|+|245|\,|156|, \qquad
	|146|\,|356| = |346|\,|156|-|546|\,|136|.
	\]
	Hence, the last factor becomes:
	\begin{align*}
	&|145|\,|256|\,|346|-|146|\,|245|\,|356|\\
	=&|125|\,|456|\,|346|+|245|\,|156|\,|346|-|346|\,|245|\,|156|+|546|\,|245|\,|136| \\
	=&|456|\,(|125|\,|346|-|136|\,|245|)
	\end{align*}
	Dividing again by the positive factor $|456|$ and by $|245|\,|346|$ we get that the sign of the determinant equals
	\[
	\frac{|125|}{|245|}-\frac{|136|}{|346|} =
	\frac{|120|}{|240|}-\frac{|130|}{|340|},
	\]
	where we call $q_0$ (and abbreviate as $0$) the intersection point of 25 and 36. This last expression can be rewritten in term of the angles around $q_0$, as follows:
	\begin{equation}\label{sines}
	\frac{|120|}{|240|}-\frac{|130|}{|340|}=
	\frac{\sin\angle 201}{\sin\angle 402}-\frac{\sin\angle 301}{\sin\angle 403}=
	\frac{\sin\alpha}{\sin\beta}-\frac{\sin\alpha'}{\sin\beta'},
	\end{equation}
where $\alpha$, $\alpha'$, $\beta$ and $\beta'$ are displayed in Figure~\ref{fig:posoriented}.
\begin{figure}[htb]
	\includegraphics[width=6.5cm]{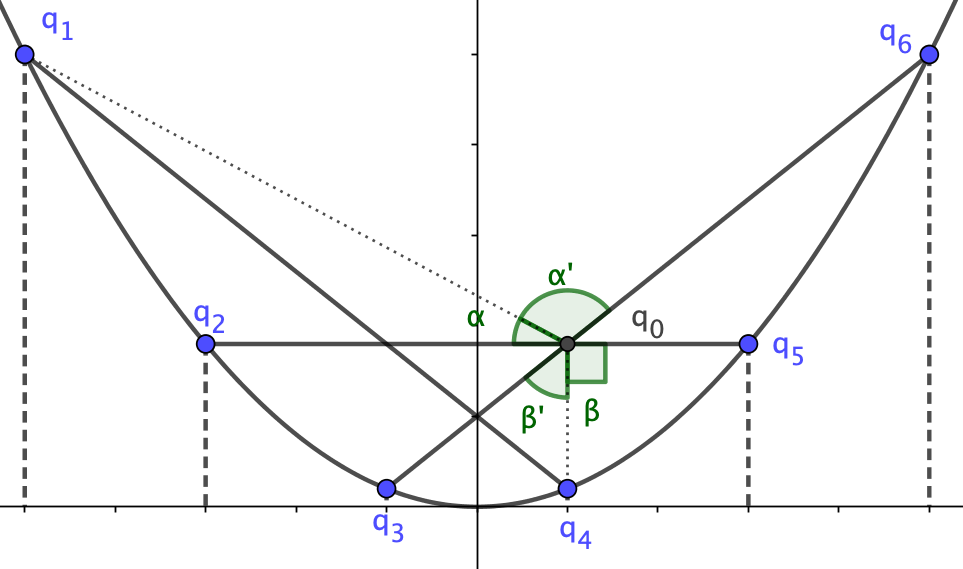}
	\caption{Explanation of the last equivalence in the proof of Theorem \ref{thm:cofact}. Each of the inequalities $\alpha<\beta$ and $\alpha'>\beta'$ is equivalent to the configuration being positively oriented.}
	\label{fig:posoriented}
\end{figure}

Looking at the figure we see that the configuration is positively oriented if, and only if, $\alpha<\beta$ and $\alpha'>\beta'$, and it is negatively oriented if the opposite inequalities hold.
	Hence:
		\begin{align*}
		\lambda_{14}\lambda_{15}>0 & \Leftrightarrow \text{the complementary determinants have opposite sign} \\
		& \Leftrightarrow \text{the complementary determinant to 14 is negative} \\
		& \Leftrightarrow \text{the value of \eqref{sines} is negative} \\
		& \Leftrightarrow \alpha<\beta \text{ and } \alpha'>\beta' \\
		& \Leftrightarrow \text{the configuration is positively oriented.}
		\qedhere
	\end{align*}
\end{proof}

Consider now the graph $K_6\setminus\{14,25,36\}$, with vertices in convex position. That is, $K_6$ minus a perfect matching, which is the graph of an octahedron. Since this graph equals $G\setminus\{14\}$, Corollary~\ref{coro:cofact} implies that $K_6\setminus\{14, 25, 36\}$ is a circuit in $\CC_3$  in Desargues position, and a basis in non-Desargues position. In particular:

\begin{corollary} 
	\label{coro:octahedron}
	The graph $K_6$ without the matching $\{14, 25, 36\}$ is a circuit in the polynomial rigidity matroid  $\PP_3(1,3,4,6,7,9)$, and a basis in the generic matroid $\PP_3$.
\end{corollary}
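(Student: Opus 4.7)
The plan is to reduce the statement to the already-proven Corollary \ref{coro:cofact} by passing from polynomial rigidity to cofactor rigidity along the parabola. By Theorem \ref{thm:3matroids}, the polynomial rigidity matroid $\PP_3(t_1,\dots,t_n)$ agrees, as an oriented matroid, with $\CC_3(\q)$ where $q_i=(t_i,t_i^2)$; when $t_1<\dots<t_n$ the points $q_i$ lie in convex position along the parabola, so Corollary \ref{coro:cofact} applies. In particular, $K_6\setminus\{14,25,36\}$ is a circuit in $\PP_3(t_1,\dots,t_6)$ exactly when the three chords of the parabola determined by the pairs $(t_1,t_4)$, $(t_2,t_5)$, $(t_3,t_6)$ are concurrent, and a basis otherwise.

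The next step is to turn concurrency into an explicit polynomial condition on $(t_1,\dots,t_6)$. The chord of the parabola joining $(t_i,t_i^2)$ and $(t_j,t_j^2)$ is the affine line $y=(t_i+t_j)x-t_it_j$. Three lines of the form $y=a_\ell x+b_\ell$ are concurrent if and only if the $3\times 3$ determinant of the matrix with rows $(a_\ell,b_\ell,1)$ vanishes, so our condition becomes
\[
D(t_1,\dots,t_6):=\det\begin{pmatrix} t_1+t_4 & -t_1t_4 & 1 \\ t_2+t_5 & -t_2t_5 & 1 \\ t_3+t_6 & -t_3t_6 & 1 \end{pmatrix}=0.
\]

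For the first claim I would simply evaluate $D$ at $(t_1,\dots,t_6)=(1,3,4,6,7,9)$. The rows become $(7,-6,1)$, $(10,-21,1)$, $(13,-36,1)$, whose determinant expands as $7(-21+36)+6(10-13)+(-360+273)=105-18-87=0$. Thus the six points are in Desargues position and Corollary \ref{coro:cofact} yields that $K_6\setminus\{14,25,36\}$ is a circuit in $\PP_3(1,3,4,6,7,9)$. For the second claim, since $D$ is a nonzero polynomial in the variables $t_1,\dots,t_6$ (its vanishing locus is a proper algebraic subvariety of the open chamber $t_1<\dots<t_6$), generic parameters satisfy $D\ne 0$; by Corollary \ref{coro:cofact} the graph is then a basis in $\CC_3(\q)$, hence in the generic polynomial rigidity matroid $\PP_3$.

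There is no real obstacle here: the argument is a direct application of Corollary \ref{coro:cofact} combined with the identification of $\PP_3$ with cofactor rigidity on the parabola provided by Theorem \ref{thm:3matroids}. The only thing that requires verification is the vanishing of a single explicit $3\times 3$ determinant for the parameters $(1,3,4,6,7,9)$, which is a one-line computation, together with the observation that $D$ is not the zero polynomial (already witnessed by any choice where it fails to vanish, or simply because the concurrency condition in Desargues' theorem is a genuine closed condition on six points in convex position).
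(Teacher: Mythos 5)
Your proof is correct and follows the same overall strategy as the paper: pass from $\PP_3$ to cofactor rigidity on the parabola via Theorem~\ref{thm:3matroids}, then invoke Corollary~\ref{coro:cofact}, so that everything reduces to deciding whether the chords $14$, $25$, $36$ of the parabola are concurrent. The only difference is in how that Desargues condition is verified: you compute the concurrency determinant $\det\bigl(\begin{smallmatrix} t_1+t_4 & -t_1t_4 & 1 \\ t_2+t_5 & -t_2t_5 & 1 \\ t_3+t_6 & -t_3t_6 & 1 \end{smallmatrix}\bigr)$ directly at $(1,3,4,6,7,9)$ and check it is zero, whereas the paper translates the parameters by $-5$ to the symmetric tuple $(-4,-2,-1,1,2,4)$ (using affine invariance of the oriented matroid), where the three chords visibly pass through $(0,4)$ by the symmetry $t\mapsto -t$. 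Your explicit determinant is arguably more self-contained than the paper's appeal to the figure, and it also cleanly yields the genericity argument for the basis claim, since $D$ is a nonzero polynomial in the $t_i$.
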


\begin{proof}
	Applying a translation to the parameters $t_i$ along the parabola produces an affine transformation of the point configuration, hence it does not change the oriented matroid $\PP$. So, the statement for $\PP_3(1,3,4,6,7,9)$ is equivalent to the same statement for the more symmetric $\PP_3(-4,-2,-1,1,2,4)$. The latter is in Desargues position, as seen in Figure~\ref{fig:desargues}.
\end{proof}

We can now prove that, for $k=3$ and $n=9$, there are positions where the rows of the cofactor matrix do not realize the multiassociahedron as a basis collection:

\begin{proof}[Proof of Theorem~\ref{thm:Desargues}]
We start with the graph $K_6- \{16, 37, 49\}$, with vertices labelled $\{1,3,4,6,7$, $9\}$.
Its coning at three vertices labelled $2$, $5$ and $8$ is the graph $K_9 - \{16, 37, 49\}$. The statement in the first sentence then follows from 
Proposition \ref{prop:coning-cofactor2} and Corollary \ref{coro:cofact}. The second sentence follows from 
Corollary~\ref{coro:octahedron}.
\end{proof}

Even more strongly, we can show that in this ``realization'' not even the map $\phi_{\OvAss{3}{9},\VV}:
|\OvAss{3}{9}|\to S^5$ of Theorem \ref{thm:winding} is well-defined:

\begin{example}\label{ex:desargues}
	Let $T$ be the 3-triangulation $K_9\setminus\{16,37,49\}$, and locate it with a $\p$ in Desargues position, so that it contains a circuit. For example, embedding it via $P(1,2,3,4,5,6,7,8,9)$ in the parabola. By Theorem \ref{thm:Desargues} $T$ is a circuit in this embedding.
	The vertices 1, 3, 4, 6, 7 and 9 have degree 7, so the edges incident to each of them must have alternating signs in the circuit by Lemma~\ref{lemma:chsign}. As a result, the six relevant edges in $T$, which are $\{1,5\}, \{5,9\}, \{3,8\}, \{4,8\}, \{2,7\}$ and $\{2,9\}$, all have the same sign. 
	
	The fact that the circuit is positive in all relevant edges implies that the map sending 
	$\OvAss{3}{9}$ to $\R^{k(n-2k)}=\R^6$  (mapping each vertex to the corresponding row vector of $\PP(t_1,\dots,t_9)$ and extending linearly in each face) contains the origin in its image. Hence, it does not produce a well-defined map $\phi_{\OvAss{3}{9},\VV}: |\OvAss{3}{9}|\to S^5$. Moreover, if we choose positions $\p'$ and $\p''$ close to $\p$ that are sufficiently generic to make $T$ a basis but with opposite orientations, then the degrees of the maps $|\OvAss{3}{9}|\to S^5$ obtained will differ by one unit.
\end{example}

\subsection{Cofactor rigidity does not realize $\OvAss{k}{n}$, for $n\ge 2k+6$, $k\ge 3$}
\label{sec:obstruction}

Although our main result in this section deals with the case $n\ge 2k+6$, for most of the section we assume $n=2k+3$ and characterize exacty when does cofactor rigidity $C_{2k}$ realize $\OvAss{k}{2k+3}$ as a complete fan. (We already saw in Theorem~\ref{thm:Desargues} that it not always does).

With $n=2k+3$ there are exactly $2k+3$ relevant edges, namely those of the  form $\{a,a+k+1\}$, for $a\in[n]$. These edges form a $(k+1)$-star $S$ that we call the \emph{relevant star}. We will normally consider the relevant edges in their ``star order'': the cyclic order in which $\{a,a+k+1\}$ is placed right after $\{a-k-1,a\}$.  Put differently, the edges of the relevant star, in their star order, are
\[
S= \{\{1,k+2\},\{1,k+3\},\{2,k+3\},\{2,k+4\},\ldots,\{k+1,2k+3\},\{k+2,2k+3\}\}
\]
Removing a number $\ell$ of edges of the relevant star results in $\ell$ paths counting as a ``path of length zero'' the empty path between two consecutive edges removed.

A simple counting shows that $k$-triangulations with $2k+3$ vertices are of the form $K_{2k+3}\setminus\{$3 edges$\}$. However, $K_{2k+3}$ minus three relevant edges is not always a $k$-triangulation. The necessary and sufficient condition is that the three paths obtained removing these edges are even. This ``evenness criterion'' is the reason why $\OvAss{k}{2k+3}$ is combinatorially a cyclic polytope of dimension $2k$ in $2k+3$ vertices.

In a similar way, the union of two adjacent $k$-triangulations is obtained removing two relevant edges from $K_{2k+3}$. We call such unions \emph{circuits} since we want to realize them as circuits in the vector configuration. The two relevant paths in a circuit $C$ necessarily have different parity, and the $k$-triangulations contained in $C$ are  obtained removing an edge that splits the odd path into two even paths. (For this to be doable in more than one way the odd path in $C$ must be of length at least three. But if $C$ equals $K_{2k+3}$ minus two edges and the odd relevant path in $C$ has length one then $C$ contains a $(k+1)$-crossing, so we are not interested in it).

Any codimension-two face $F$ of $\OvAss{k}{2k+3}$), in turn, is of the form ``$K_{2k+3}$ minus five relevant edges''. Let $\{a,b,c,d,e\}$ be the edges in star order, so that the relevant star minus $\{a,b,c,d,e\}$ consists of five paths (some of which may have length zero, as remarked above). The length of the elementary cycle of $F$ depends on the parity of the five paths, as follows (see Example \ref{ex:flips} for the first three cases with $k=2$):
\begin{itemize}
	\item If  the five paths are even, then the cycle has length five and the vertices of the cycle (that is, the  $k$-triangulations containing $F$) are obtained adding $\{a,b\}$, $\{b,c\}$, $\{c,d\}$, $\{d,e\}$, and $\{e,a\}$.
	\item If two consecutive paths,  say $(a,b)$ and $(b,c)$, are odd, then the cycle has length three and its vertices are formed with $\{a,b\}$, $\{b,c\}$ and $\{c,a\}$.
	\item If two non-consecutive paths,  say $(a,b)$ and $(c,d)$, are odd, then the cycle has length four and the vertices are formed with $\{a,c\}$, $\{c,b\}$, $\{b,d\}$ and $\{d,a\}$.
	\item If only one path is even then no multitriangulation contains $F$. $F$ is not really a face.
\end{itemize}
Hence, the only case with length 5 is the one with all intervals even.

We call a $k$ triangulation of the $(2k+3)$-gon \emph{octahedral} if its three missing edges have six distinct endpoints; equivalently, if the three relevant paths in it have positive length. (Observe that this needs $2k+3\ge 9$, hence $k\ge 3$). 
The reason for this name is that any such $k$-triangulation is, as a graph, an iterated cone (an odd number of times, greater than 1) over the graph of an octahedron.

\begin{lemma}\label{lemma:des39}
Consider a configuration $\q$ in convex position.
	Let $1\le i_1<i_2<i_3 <i_4\le 2k+3$ be such that $C:=K_{2k+3}\setminus\{\{i_1,i_3\}, \{i_2,i_4\}\}$ is a circuit and let $\lambda$ be its unique  (modulo a scalar factor) dependence in the cofactor matric $C_{2k}(\q)$; in particular, we assume that $i_3-i_1, i_4-i_2 \in \{k+1,k+2\}$. Let $\{i_1',i_3'\}$ be the first edge in the odd path of $S\setminus \{\{i_1,i_3\}, \{i_2,i_4\}\}$ (which can be $\{i_1\pm 1,i_3\}$ or $\{i_1,i_3\pm 1\}$) and let $\{j_1,j_2\}$ be another edge in the same path at a distance $d$ from the first. Then, we have that
	\[
	\sign(\lambda_{j_1j_2})=(-1)^d\sign(\lambda_{i_1'i_3'})
	\]
	if and only if the triangle formed by the three edges $\{i_1,i_3\}$, $\{i_2,i_4\}$ and $\{j_1,j_2\}$ is in the inner side of $\{i_1,i_3\}$ and $\{i_2,i_4\}$ (the side of length $k+2$).
	
	In particular, if all edges with $d$ even satisfy the condition, then the condition ICoP is satisfied by all flips contained in $C$.
\end{lemma}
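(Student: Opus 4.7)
The plan is to reduce the sign computation to the Morgan–Scott obstruction in $\CC_3$ via the coning theorem, and then to track the accumulated reorientations to obtain the $(-1)^d$ factor.

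First, for an edge $\{j_1,j_2\}$ disjoint from $\{i_1,i_2,i_3,i_4\}$, I would apply Proposition~\ref{prop:coning-cofactor2} iteratively to contract at the $2k-3$ remaining vertices, reducing the circuit $C$ to $K_6\setminus\{\{i_1,i_3\},\{i_2,i_4\}\}$ in $\CC_3$ on the six-point subconfiguration $\{i_1,i_2,i_3,i_4,j_1,j_2\}$. The reduced dependence coefficients coincide with the original ones up to sign flips: each contraction at a vertex $v$ flips the sign of every edge $\{a,b\}$ with $a<v<b$. In the reduced $\CC_3$ setting, Theorem~\ref{thm:cofact}(2) applies directly: after a relabelling sending $\{i_1,i_3\}$ and $\{i_2,i_4\}$ to $\{2,5\}$ and $\{3,6\}$, the sign of the coefficient at $\{j_1,j_2\}$ (playing the role of $\{1,4\}$) is positive, negative, or zero according to whether the three chords are in positively oriented, negatively oriented, or Desargues configuration. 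I would verify that ``positively oriented'' in the sense of the definition preceding Theorem~\ref{thm:cofact} translates to the geometric condition in the lemma: the intersection of the lines $\{i_1,i_3\}$ and $\{i_2,i_4\}$ lying on the side of $\{j_1,j_2\}$ that places the three-line triangle on the inner side of both removed chords.

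Next, the factor $(-1)^d$ comes from counting the accumulated reorientations. The sign of $\lambda_{\{j_1,j_2\}}$ in $\CC_{2k}$ equals its sign in the reduced $\CC_3$ times $(-1)^{s(j_1,j_2)}$, where $s(j_1,j_2)$ counts contracted vertices separating $j_1$ and $j_2$ in the linear order on $[2k+3]$. An analogous reduction (keeping $\{i_1,i_2,i_3,i_4,i_1',i_3'\}$) gives the sign of $\lambda_{\{i_1',i_3'\}}$ up to a flip of $(-1)^{s(i_1',i_3')}$. I would prove that the difference $s(j_1,j_2)-s(i_1',i_3')$ has the same parity as $d$ by induction along the odd path of the star: moving by one step changes exactly one endpoint of the edge, since consecutive star edges share a vertex, and by convex position this change alters the count of separating contracted vertices by exactly one unit. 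Edges $\{j_1,j_2\}$ sharing a vertex with $\{i_1,i_3\}$ or $\{i_2,i_4\}$ (including $\{i_1',i_3'\}$ itself) require a slightly different treatment: keeping one extra vertex and reducing to $\CC_4$ on seven points, or invoking Lemma~\ref{lemma:chsign} at the shared vertex to propagate signs from the immediately adjacent edges. The base case $d=0$ of the induction is a tautology.

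The ``in particular'' clause then follows immediately: the $k$-triangulations contained in $C$ correspond bijectively to the edges at even positions along the odd path, and ICoP for the flip between any two such triangulations is exactly the equality of the corresponding signs in $\lambda$, guaranteed by the formula whenever the geometric condition holds at every even $d$. The most delicate step is the sign bookkeeping, both in the Desargues orientation identification, which I would verify by direct inspection of the configurations in convex position, and in the inductive count for the $(-1)^d$ factor, where the relationship between star distance and cyclic-order separation of contracted vertices requires careful combinatorial accounting.
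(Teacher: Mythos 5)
Your high-level strategy coincides with the paper's: reduce to the Morgan--Scott configuration via iterated coning (Proposition~\ref{prop:coning-cofactor2}), invoke Theorem~\ref{thm:cofact}(2), and account for the reorientations. But the way you organize the sign bookkeeping has a genuine gap and misses the key simplifying step the paper uses.

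The main problem is that Theorem~\ref{thm:cofact}(2) determines only the \emph{ratio} $\lambda_{14}/\lambda_{15}$ in the reduced $K_6\setminus\{25,36\}$, not the individual sign of $\lambda_{14}$. Your plan is to compute $\sign(\lambda_{j_1j_2})$ by one reduction (keeping $\{i_1,i_2,i_3,i_4,j_1,j_2\}$) and $\sign(\lambda_{i_1'i_3'})$ by another, then compare. But each reduction only pins down relative signs within its own six-point configuration, and you have no common reference edge across the two reductions. Compounding this, the reference edge $\{i_1',i_3'\}$ \emph{always} shares a vertex with $\{i_1,i_3\}$, so it never fits the clean six-point reduction; your fallback (``keeping one extra vertex and reducing to $\CC_4$,'' or ``invoking Lemma~\ref{lemma:chsign} at the shared vertex'') is exactly where the real work is, and you leave it unresolved. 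Your inductive claim that $s(j_1,j_2)-s(i_1',i_3')$ has the parity of $d$ is also not established: the set of contracted vertices itself changes as you move along the path, so ``alters the count by exactly one unit per step'' is not immediate and is never verified.

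The paper sidesteps all of this with one move you don't make: it applies Lemma~\ref{lemma:chsign} at the high-degree vertex $i_3$ (degree $2k+1$ in the circuit, so the signs of $\lambda_{i_3,\cdot}$ alternate strictly) to replace the reference $\lambda_{i_1'i_3'}$ by $\lambda_{j_1 i_3}$, picking up an explicit $(-1)^{i_1-j_1-1}$. Now both edges of interest, $\{j_1,j_2\}$ and $\{j_1,i_3\}$, contain $j_1$ and avoid $\{i_1,i_2,i_4\}$, so after a \emph{single} coning reduction to the six points $\{j_1,i_1,i_2,j_2,i_3,i_4\}$ they become precisely the edges $14$ and $15$ of Theorem~\ref{thm:cofact}. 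Tracking the \emph{product} $\lambda_{j_1j_2}\lambda_{j_1i_3}$ through the cones, the flips at the two factors cancel except for the $i_3-j_2-1$ cones between $j_2$ and $i_3$, and the rest is a direct application of Theorem~\ref{thm:cofact}(2) together with the relation $j_1+j_2 = i_1'+i_3'-d$ that identifies all the parities. If you want to salvage your version, the missing idea is exactly this anchoring of the reference at $j_1$ via Lemma~\ref{lemma:chsign}; without it, the two-reduction comparison and the induction on $d$ do not close.

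Your final paragraph (the ``in particular'' clause) is fine: given the main statement, ICoP for flips inside $C$ does reduce to the sign pattern at even $d$.
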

\begin{proof}
	Without loss of generality we can suppose that $j_1<i_1<i_2<j_2<i_3=i_1+k+1<i_4=i_2+k+2$. Then $i_1'=i_1-1$, $i_3'=i_3$, and by the definition of star order, $j_1+j_2= i_1'+i_3'-d=i_1+i_3-d-1$.
	
	In the circuit, the degree of $i_3$ is $2k+1$ and by Lemma \ref{lemma:chsign},
	\[\sign(\lambda_{i_1'i_3'})=\sign(\lambda_{i_1-1,i_3})=(-1)^{i_1-j_1-1}\sign(\lambda_{j_1i_3})\]
	so the condition to be checked reduces to
	\[\sign(\lambda_{j_1j_2}\lambda_{j_1i_3})=(-1)^{d+1+i_1-j_1}=(-1)^{i_3-j_2}\]
	
	The circuit is obtained by a repeated coning from the $K_6$ without two diagonals, so that the original six vertices become $\{j_1,i_1,i_2,j_2,i_3,i_4\}$. The sign of an edge is inverted whenever we make a cone with the new vertex between the endpoints of that edge. As a result, the sign of $\lambda_{j_1j_2}\lambda_{j_1i_3}$ is inverted $i_3-j_2-1$ times exactly, so in the graph at the beginning, we should have $\lambda_{14}\lambda_{15}<0$.
	
	By Theorem \ref{thm:cofact}, this happens when the triangle formed by 14, 25 and 36 is negatively oriented. After making the cones, the configuration $\{j_1,i_1,i_2,j_2,i_3,i_4\}$ is negatively oriented and the triangle is in the side between $i_3$ and $i_4$, which is the inner side of the two edges.
\end{proof}

Observe that a relevant edge with $n=2k+3$ leaves $k$ points of the configuration on one side and $k+1$ on the other side. We call the one with $k+1$ points \emph{the big half-plane} defined by the relevant edge.

\begin{theorem}\label{thm:star}
Let $\q=(q_1,q_2 ,\dots ,q_{2k+3})$ be a configuration in convex position in $\R^2$. The following are equivalent:
\begin{enumerate}
	\item	$C_{2k}(\q)$ realizes $\OvAss{k}{2k+3}$ as a complete fan.
	\item For every octahedral $k$-triangulation $T$ the big half-planes defined by the three edges not in $T$ have non-empty intersection.
	\item The relevant star has ``non-empty interior'' (that is, the big half-planes of all relevant edges have non-empty intersection).
\end{enumerate}
\end{theorem}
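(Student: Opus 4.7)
The plan is to establish the cyclic chain $(3) \Rightarrow (1) \Rightarrow (2) \Rightarrow (3)$, using Corollary~\ref{coro:fan} as the workhorse that converts the geometric hypothesis (3) into the algebraic sign conditions needed for a fan realization.

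The implication $(3) \Rightarrow (2)$ is immediate, since the three big half-planes appearing in (2) are among those making up the relevant star in (3). For $(2) \Rightarrow (3)$ I apply Helly's theorem in $\R^2$ to the convex collection of $2k+3$ big half-planes: it suffices that every three of them share a point. Triples whose edges are not pairwise crossing have big half-planes meeting in a large region around the interior endpoints of the edges, so the common intersection is immediate. For triples of pairwise crossing relevant edges, the six endpoints are distinct and form a long matching on six points, and I check that such a triple can always be completed to the set of missing edges of some octahedral $k$-triangulation (this uses $k \ge 3$, which gives enough room to adjust the three remaining relevant paths to have positive even length).

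For $(1) \Rightarrow (2)$, assume $C_{2k}(\q)$ realizes $\OvAss{k}{2k+3}$ as a fan, and fix an octahedral $k$-triangulation $T$. As a graph, $T$ is an iterated cone over $K_6 \setminus M$, where $M$ is the long matching on the six endpoints of the three missing edges. By the Coning Theorem (Proposition~\ref{prop:coning-cofactor}), $T$ is a basis in $\CC_{2k}(\q)$ iff $K_6 \setminus M$ is a basis in $\CC_3$ at those six points, which by Corollary~\ref{coro:cofact} is equivalent to non-Desargues position. A short geometric check then identifies non-Desargues position of the three long diagonals with non-emptiness of the intersection of their big half-planes: in Desargues position the three half-planes meet only at the concurrency point on their common boundary, and in non-Desargues position they carve out an open triangle inside the polygon.

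The heart of the argument is $(3) \Rightarrow (1)$, where I verify the three conditions of Corollary~\ref{coro:fan}. For the basis-collection condition, octahedral triangulations are bases by the converse of the reasoning in $(1) \Rightarrow (2)$, while non-octahedral ones (whose missing edges share endpoints) reduce via iterated coning to small graphs that are bases in low-dimensional cofactor rigidity in general position, independently of any Desargues-type condition. For the ICoP condition, each flip corresponds to a circuit of the form $K_{2k+3}$ minus two relevant edges, and Lemma~\ref{lemma:des39} translates the required sign condition of Corollary~\ref{coro:fan} into the geometric statement that the third flipping edge lies on the inner side of the two missing edges; the common interior point of the star provided by (3) yields exactly this for every flip. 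For the length-5 cycle condition, I combine Proposition~\ref{prop:shortcycles} with Lemma~\ref{lemma:des39}, applied to each of the five circuits surrounding a codimension-two face whose five relevant paths are all even, and again use the common point from (3) to guarantee the needed angle inequality at some consecutive triple.

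The main obstacle is the sign bookkeeping in the ICoP and 5-cycle verifications under (3). Lemma~\ref{lemma:des39} gives the correct local translation between sign data and geometry, but converting the single global hypothesis (a point lying in every big half-plane) into the many local sign compatibilities requires careful tracking of cyclic orderings around each vertex and of how the star's inner side is inherited by each individual flip and 5-cycle.
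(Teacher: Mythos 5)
The implication $(1)\Rightarrow(2)$ is where your proof breaks. You pass from ``the fan is realized'' to ``every octahedral $T$ is a basis'' to ``the six endpoints are not in Desargues position'' to ``the three big half-planes have non-empty intersection'', but the last step is false: non-Desargues position only says the three chord-lines bound a non-degenerate triangle, and that triangle can just as well lie in the intersection of the three \emph{small} half-planes. Theorem~\ref{thm:cofact} makes this precise --- the two non-Desargues alternatives (``positively'' vs.\ ``negatively'' oriented) are distinguished exactly by the sign of $\lambda_{14}/\lambda_{15}$, and only one of them gives the intersection you want, although both make the octahedron a basis. This dichotomy is precisely what the proof of Theorem~\ref{thm:2k+6} exploits. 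Being a basis collection is an unsigned (matroid) statement and cannot detect the fan condition; the paper's $(1)\Rightarrow(2)$ instead uses the \emph{oriented}-matroid condition ICoP for a flip out of an octahedral $k$-triangulation, translated into half-plane geometry via Lemma~\ref{lemma:des39}.

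There are two further gaps. In $(2)\Rightarrow(3)$ it is not true that every triple of pairwise crossing relevant edges with distinct endpoints is the missing set of an octahedral $k$-triangulation: such a triple cuts the relevant star into three \emph{positive}-length paths, but two of those paths may be odd (e.g.\ for $k=3$, $n=9$, the triple $\{1,6\},\{2,7\},\{3,8\}$ yields path lengths $(1,1,4)$), in which case the triple is not the missing set of any $k$-triangulation and hypothesis $(2)$ says nothing about it. Since a $k$-triangulation on $2k+3$ vertices has exactly three missing relevant edges, ``completed to the set of missing edges'' has no content, and there is no freedom to ``adjust'' the paths once the edges are fixed. The paper treats these two-odd-paths triples by a direct geometric argument showing their big half-planes always intersect, independently of $(2)$. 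Finally, in $(3)\Rightarrow(1)$ you invoke Lemma~\ref{lemma:des39} for all flips, but that lemma only applies when the two missing edges of the circuit have four distinct endpoints; flips whose missing edges share a vertex need the separate $K_{2k+2}$-plus-Lemma~\ref{lemma:chsign} argument, as in the paper.
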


\begin{remark}
It is interesting to note that, when condition (3) holds, any point $o$ taken in the ``interior'' of the relevant star makes the vector configuration $\{q_1-o,\dots q_{2k+3}-o\}$ be a Gale transform of the cyclic $2k$-polytope with $2k+3$ vertices. That is to say, the theorem says that $\q$ realizes $\OvAss{k}{2k+3}$ as a fan if and only if there is a point $o\in \R^2$ such that $\q-o$ is the Gale transform of a cyclic polytope. It seems to be a coincidence that the cyclic polytope in question is in fact isomorphic to $\OvAss{k}{2k+3}$.
\end{remark}

\begin{proof}
	The implication (3)$\Rightarrow$(2) is trivial. Let us see the converse.
	First observe that, by Helly's Theorem, the intersection of all half-planes is non-empty if, and only if, the intersection of every three of them is non-empty. So, we only need to show that, when condition (3) is restricted to three half-planes, only the case where the half-planes come from the missing edges in an  octahedral triangulation matters.

	So, consider three relevant  edges $\{\{i_1,i_4\},\{i_2,i_5\},\{i_3,i_6\}\}$ and their corresponding big half-planes. We look at the three paths obtained in the relevant star when removing these three edges.
	If at least one path (and hence exactly two) has odd length, then the intersection of the three big half-planes is automatically non-empty: let the even interval be $(i_6, i_1)$. Then the edge $\{i_2,i_5\}$ crosses the other two and leaves both $i_1$ and $i_6$ in its big  half-plane, so we can always find a point in the intersection of the three half-planes in a neighborhood of the intersection of the lines containing $\{i_1,i_4\}$ and $\{i_3,i_6\}$.
	
	Similarly, if two of the three edges are consecutive (say $i_6=i_1$), then the intersection of their two half-planes is an angle of the relevant star. This angle necessarily meets both of the half-planes defined by the third edge $\{i_2,i_5\}$, so the intersection is again non-empty.
	
	That is, the only case of three edges whose big half-planes might perhaps produce an empty intersection is when the three paths they produce are even and non-empty. This is exactly the same as saying that they are the three missing edges of an octahedral $k$-triangulation, which proves (2)$\Rightarrow$(3).
	
	Now, the implication (1)$\Rightarrow$(2) follows from the previous lemma: if the complete fan is realized, the condition ICoP is satisfied in the flips from an octahedral triangulation, in particular, the flip from $K_{2k+3}\setminus\{\{i_1,i_3\},\{i_2,i_4\},\{j_1,j_2\}\}$ that removes $\{i_1',i_3'\}$ and inserts $\{j_1,j_2\}$, with $d$ even. By Lemma \ref{lemma:des39}, this is equivalent to saying that the big half-planes of $\{i_1,i_3\}$, $\{i_2,i_4\}$ and $\{j_1,j_2\}$ intersect, which covers all the cases in (2). So it only remains to show that (2)$\Rightarrow$(1).
	
	If (2), or equivalently (3), holds then we know, by the previous argument, that flips from an octahedral triangulation satisfy ICoP. These are exactly the flips whose two missing edges do not share a vertex. The other flips must be of the form $K_{2k+3}$ minus two edges with a common end-point.
	Hence, they contain a $K_{2k+2}$, in which the signs are as predicted by Lemma~\ref{lemma:chsign} and the ICoP property also holds in them.
	
	To finish the proof, we just need to check the condition about elementary cycles of length 5. Given one of these cycles, adding three consecutive edges of the five in the cycle gives the graph of a flip. If two edges in the cycle share a vertex, we can add the other three edges to get the graph of a flip that contains a $K_{2k+2}$, which has the two flipping edges as diameters and the other edge with length $k$, so it has opposite sign. Otherwise, the five edges are disjoint.
	
	In this case, let $\{a,b,c,d,e\}$ be the edges. By the condition (3), their five big half-planes have non-empty intersection. Without loss of generality, suppose that $b$ is a side of that intersection. Adding $a$, $b$ and $c$, we get the graph of a flip. As $b$ is a side of the intersection, the triangle formed by $b$, $d$ and $e$ is inside the big half-planes of $d$ and $e$, so we can again apply Lemma \ref{lemma:des39} to get that $b$ has opposite sign to $a$ and $c$, as we wanted to prove.
\end{proof}

\begin{corollary}\label{coro:2k+3}
For every $k$ there are point configurations $\q$ such that $C_{2k}(\q)$ realizes $\OvAss{k}{2k+3}$ as a fan. For example, the vertices of a regular $(2k+3)$-gon.
\end{corollary}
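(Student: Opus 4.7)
The plan is to invoke Theorem~\ref{thm:star}: it suffices to exhibit, for each $k$, a configuration $\q$ in convex and general position for which condition (3) holds, namely that the big half-planes of all $2k+3$ relevant edges share a common point. I will show that, for the regular $(2k+3)$-gon, the center $o$ is such a common point. Convex position is obvious and general position (no three collinear) is immediate for a regular polygon, so the hypotheses of Theorem~\ref{thm:star} are satisfied.

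Fix a relevant edge $\{i,i+k+1\}$. It is a chord of the circumscribed circle, and since $n=2k+3$ is odd it cannot pass through the center $o$. The chord separates the remaining $2k+1$ vertices into a "short side" containing the $k$ vertices indexed by $i+1,\dots,i+k$ and a "long side" containing the $k+1$ vertices indexed by $i+k+2,\dots,i-1$; by definition the big half-plane is the long side. The perpendicular from $o$ to the chord lands at the midpoint of the chord, and $o$ lies on the side of the chord containing the midpoint of the longer arc. The shorter arc subtends a central angle equal to $(k+1)\cdot\frac{2\pi}{2k+3}<\pi$, so it is the arc over the short side; consequently $o$ lies in the big half-plane.

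Since this holds for every relevant edge, $o$ lies in the intersection of all big half-planes, so the relevant star has non-empty interior. By Theorem~\ref{thm:star}, $C_{2k}(\q)$ realizes $\OvAss{k}{2k+3}$ as a complete fan, which proves the corollary. There is no real obstacle here: once Theorem~\ref{thm:star} is in hand the result reduces to the elementary geometric observation that the center of a regular polygon with an odd number of vertices lies on the majority side of every chord joining two of its vertices.
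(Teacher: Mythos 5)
Your proposal is correct and follows exactly the paper's route: both invoke Theorem~\ref{thm:star} and observe that the center (barycenter) of the regular $(2k+3)$-gon lies in the interior of the relevant star. You merely spell out the elementary chord/arc argument that the paper leaves implicit.
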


\begin{proof}
	The barycenter of the $2k+3$-gon lies in the interior of the relevant star.
\end{proof}

This theorem also implies Theorem \ref{thm:2k+6}. Cofactor rigidity with points in convex position cannot realize $\OvAss{k}{n}$ as a fan for $n\ge 2k+6$ and $k\ge 3$:


\begin{proof}[Proof of Theorem \ref{thm:2k+6}]
Let $\q=(q_1,\dots, q_n)$ be a configuration in convex position.
By Lemma \ref{lemma:monotone} we only need to show the case $n=2k+6$. 

Let $I_1 = [n] \setminus \{4,{k+5},{k+9}\}$ and $I_2 = [n] \setminus \{2,6,{k+7}\}$. Then $\q|_{I_1}$ and $\q|_{I_2}$ are configurations with $2k+3$ points, to which we can apply Theorem \ref{thm:star}. We consider their respective $k$-triangulations
$T_1=K_{I_1}\setminus \{\{1,k+4\}, \{3,k+6\}, \{5,k+8\}\}$ and 
$T_2=K_{I_2}\setminus \{\{1,k+4\}, \{3,k+6\}, \{5,k+8\}\}$. This theorem tells us that in order for $\q_{I_1}$  to realize $\OvAss{k}{2k+3}$ we need 
$(q_1, q_3,q_5, q_{k+4}, q_{k+6}, q_{k+8})$ to be negatively oriented, and in order for $\q_{I_2}$  to realize it  we need the same configuration to be positively oriented.
This is a contradiction, so one of the two does not realize $\OvAss{k}{2k+3}$. Lemma \ref{lemma:monotone} implies that $\q$ does not realize $\OvAss{k}{2k+6}$.
\end{proof}

\section{Positive results on realizability, for $k=2$}
\label{sec:triangsrigid}

In this section, we prove Conjecture \ref{conj:rigid} for $k=2$ and Theorem \ref{thm:fan-k=2}.

\subsection{$2$-triangulations are bases in $\PP_2(n)$ }
\label{subsec:vertex-split}

To  prove Theorem~\ref{thm:rigid} (that is, Conjecture \ref{conj:rigid} for $k=2$) we need operations that send a 2-triangulation in $n$ vertices to one in $n+1$ vertices, and viceversa. These operations are called the \emph{inflation of a $2$-crossing}, and  the \emph{flattening of a star}. They are defined for arbitrary $k$ in \cite[Section 7]{PilSan}, 
but we use only the case $k=2$.

An \emph{external $2$-crossing} is a $2$-crossing with two of the end-points consecutive. An \emph{external $2$-star} is one with three consecutive points. Equivalently, one using a boundary edge (an edge of length two).
Let $u,v,w$ be three consecutive vertices of the $(n+1)$-gon, and consider the $n$-gon obtained by removing the middle vertex $v$. The identity map on vertices then induces a bijection between external $2$-stars in the $(n+1)$-gon using the boundary edge $\{u,w\}$ and external $2$-crossings in the $n$-gon using the vertices $u, w$. If, to simplify notation, we let $[n+1]$ and $[n]$ be our vertex sets, with $u=n$, $v=n+1$ and $w=1$, the bijection is
\[
S =\{\{n,1\},\{1,c\},\{c,n+1\},\{n+1,b\},\{b,n\}\} \quad\leftrightarrow\quad C =\{\{n,b\},\{1,c\}\}.
\]

From now on let us fix an external star $S\subset \binom{[n+1]}{2}$ and its corresponding external crossing $C\in \bnn$, of the above form. Consider the set $\link_2(S)^0$ (respectively $\link_2(C)^0$) of relevant edges that do not form a $3$-crossing with $S$, (respectively, with $C$); they are, respectively, the sets of vertices in $\link_{\OvAss{2}{n+1}} (S)$ and in $\link_{\OvAss{2}{n}} (C)$.

\begin{theorem}[{\cite[Section 7]{PilSan}}]
\label{thm:flattening}
The following map is a bijection
\[
\begin{array}{crcl}
\phi:& \link_2(S)^0 &\to &\link_2(C)^0 \\
& \{i,j\} & \mapsto & \{i,j\}  \quad \text{if $n+1\not \in \{i,j\}$ }  \\
& \{i,n+1\} & \mapsto & 
  \begin{cases}
      \{i,n\} &  \text{if $ 1 \le i < b$ } \\
      \{1,i\} &  \text{if $ c < i \le n$ } \\
  \end{cases}    
\end{array}
\]
and it induces an isomorphism of simplicial complexes
\[
\hat \phi:\link_{\OvAss{2}{n+1}} (S) \stackrel{\cong}{\longrightarrow} \link_{\OvAss{2}{n}} (C).
\]
\end{theorem}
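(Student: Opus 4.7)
I plan to prove Theorem~\ref{thm:flattening} in three steps: verify that $\phi$ is well-defined as a map $\link_2(S)^0 \to \link_2(C)^0$, verify it is a bijection, and show the induced map $\hat\phi$ preserves faces. Throughout I use the cyclic order $1 < b < c < n < n+1$ of the vertices of $S$, so the edges of $S$ are the pairs $\{s_i, s_{i+2}\}$ of the $2$-star and the two mutually crossing edges $\{1, c\}, \{b, n\}$ (those not incident to $n+1$) are exactly the edges of $C$.

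For well-definedness, I would first identify the vertices excluded from each link. An edge $\{i, n+1\}$ with $b < i < c$ is a bisector of the angle of $S$ at $n+1$; together with $\{1, c\}$ and $\{b, n\}$ (which cross each other and each cross $\{i, n+1\}$) it forms a $3$-crossing, so it is excluded from $\link_2(S)^0$. This explains the domain split $i < b$ or $i > c$ in the definition of $\phi$. The link $\link_2(C)^0$ similarly excludes relevant edges with one endpoint in $\{2, \ldots, b-1\}$ and the other in $\{c+1, \ldots, n-1\}$, since those cross both edges of $C$. A case-by-case check (edges using $n+1$ versus not) confirms that $\phi$ lands in $\link_2(C)^0$. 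For bijectivity, I would define the inverse by: $\{i, n\} \mapsto \{i, n+1\}$ for $i < b$; $\{1, i\} \mapsto \{i, n+1\}$ for $i > c$; and identity on all other edges of $\link_2(C)^0$. The three cases are disjoint, and their union is $\link_2(C)^0$ because the only relevant edges of $[n]$ involving $1$ or $n$ that are excluded from $\link_2(S)^0$ (when reinterpreted in $[n+1]$) are exactly $\{i, n\}$ with $i < b$ and $\{1, i\}$ with $i > c$.

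The substance of the proof is showing that $F \cup S$ is $3$-crossing-free in $[n+1]$ iff $\phi(F) \cup C$ is $3$-crossing-free in $[n]$. Since $S$ and $C$ are themselves $3$-crossing-free, any such $3$-crossing uses at most two edges from either, and I would enumerate by this count. The guiding picture is that $\phi$ implements the combinatorial collapse of vertex $n+1$ onto the interior of the boundary edge $\{n, 1\}$: the cyclic order on the remaining vertices and the crossings of chords not using $n+1$ are preserved; each non-bisector edge $\{i, n+1\}$ unambiguously becomes $\{i, n\}$ or $\{i, 1\}$ depending on which side of $n+1$ the vertex $i$ lies in the star; and the two star edges at $n+1$ fuse with $\{1, c\}$ and $\{b, n\}$ to become the two edges of $C$. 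The main obstacle is the combinatorial bookkeeping: $S$ has five pairs of mutually crossing edges while $C$ has only one, so I would verify pair by pair that the constraint each $S$-pair imposes on a third edge matches, under $\phi$, the constraint coming from the unique pair of $C$. The four $S$-pairs involving a star edge at $n+1$ impose constraints already captured by the single $C$-pair after collapse (the potentially dangerous third edges are precisely those whose image under $\phi$ crosses both edges of $C$), while the remaining pair $\{1,c\},\{b,n\}$ coincides with $C$ on edges not using $n+1$ and accounts for the bisector exclusion on edges using $n+1$.
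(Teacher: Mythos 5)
Your argument for the bijection itself (well-definedness, injectivity, surjectivity) is essentially the paper's: in both cases one first observes that $\{i,n+1\}\in\link_2(S)^0$ iff $i\notin[b,c]$, which determines the domain split, and then checks that the two ``corrective'' cases $\{i,n+1\}\mapsto\{i,n\}$ and $\{i,n+1\}\mapsto\{1,i\}$ exactly fill the gap between $\link_2(S)^0\cap\bnn$ and $\link_2(C)^0$. Writing an explicit inverse rather than arguing injectivity and surjectivity separately is a cosmetic difference.

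Where you depart from the paper is the isomorphism of complexes. The paper handles this in one line by invoking Lemmas 7.3 and 7.6 of \cite{PilSan}, which establish precisely that $T\mapsto\phi(T\setminus S)\cup C$ preserves $3$-crossing-freeness in both directions. You instead propose to reprove this from scratch by enumerating potential $3$-crossings according to how many of their three edges lie in $S$ (resp.\ $C$) and checking ``pair by pair'' that the five mutually-crossing pairs of $S$ and the single crossing pair of $C$ impose matching constraints under $\phi$. That is the right \emph{plan} — it is essentially the content of those cited lemmas — but as written it stops at the level of a strategy. In particular, the interesting cases are not just the bisector exclusion: you must also check $3$-crossings in which one or two of the three edges use $n+1$ and get rerouted to $n$ or to $1$, and verify that crossings among such rerouted edges (and between them and chords avoiding $n+1$) are created and destroyed in lockstep with the $C$-side picture. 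This is the genuine bookkeeping and is more delicate than your sketch suggests; if you want a self-contained proof you need to carry out that enumeration, and if not you should cite \cite{PilSan} as the paper does. Either is acceptable, but the current middle ground leaves the heart of the theorem unproved.

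One small inaccuracy worth fixing: ``the two star edges at $n+1$ fuse with $\{1,c\}$ and $\{b,n\}$ to become the two edges of $C$'' is a loose way to describe the collapse. The edges of $S$ are not in the domain of $\phi$ at all; what happens is that $\{c,n+1\}$ and $\{n+1,b\}$ simply disappear when $n+1$ is deleted, while $\{1,c\}$ and $\{b,n\}$ survive unchanged as the two edges of $C$, and the fifth edge $\{n,1\}$ of $S$ becomes a boundary edge of the $n$-gon. Stating it this way avoids suggesting that $\phi$ acts on $S$.
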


\begin{proof}
The map is well defined because $\{i,n+1\}$ is in $\link_2(S)^0$ if and only if $i\not\in[b,c]$.

It is injective because the only edges that could have the same image are $\{i,n\}$ and $\{i,n+1\}$ if
$1\le i < b$ or $\{1,j\}$ and $\{j,n+1\}$ if $c<j\le n$. But in the first case $\{i,n\}$ would form a $3$-crossing with $\{b,n+1\}$ and $\{c,1\}$, and in the second case $\{1,j\}$ would form a $3$-crossing with $\{b,n\}$ and $\{c,n+1\}$.

It is surjective because if $\{i,j\}\in \bnn$ is not in the image of $\phi$ then the first case in the definition implies $\{i,j\}\not \in \link_2(S)^0$, but the only edges in $\bnn \setminus \link_2(S)^0$ are those with $1\le i<b$ and $c\le j<n$. Among these, the only ones in $\link_2(C)^0$ are those with $i=b$ or $j=c$, which are in the image of $\phi$.

To show that it induces an isomorphism of the complexes we need to check that if $T\subset \binom{[n+1]}{2}$ is $3$-crossing-free and contains $S$ then $\phi(T\setminus S)\cup C$ is also $3$-crossing-free, and vice-versa. These are essentially Lemmas 7.3 and 7.6 in \cite{PilSan}.
\end{proof}

\begin{definition} [Flattening of a star, inflation of a crossing]
\label{defi:inflation}
\label{defi:dilation}
Let $e\in\binom{[n+1]}{2}$ be a boundary edge, let $S\subset\binom{[n+1]}{2}$ be an external $2$-star using $e$, and let $T$ be a $2$-triangulation containing $S$. 

Let $\hat\phi$ be the isomorphism of Theorem \ref{thm:flattening} (after a cyclic relabelling sending $e$ to $\{n,1\}$).
We say that the $2$-triangulation $\hat\phi(T)$ is the \textit{flattening of $e$ in $T$}, and denote it $\underline{T}_e$.
We also say that $T$ is the \textit{inflation} of $C$ in $\hat\phi(T)$.
%
%
\end{definition}

The crucial fact  that we need is, under certain conditions, \emph{inflation of a $2$-crossing} is a particular case of a \emph{vertex split}.

\begin{definition}[Vertex $d$-split]
	A \emph{vertex $d$-split} in a graph $G=(V,E)$ consists in changing a vertex $u\in V$, with degree at least $d-1$, into two vertices $u_1$ and $u_2$ joined by an edge and joining all neighbours of $u$ to at least one of $u_1$ or $u_2$, and exactly $d-1$ of the neighbors to both.
\end{definition}

Put differently, the graph $G'$ with vertex set $V\setminus \{u\} \cup \{u_1,u_2\}$ is a vertex $d$-split of a graph $G$ on $V$ if, and only if: $G'$ contains the edge $u_1u_2$, the contraction of that edge produces $G$, and $u_1$ and $u_2$ have exactly $d-1$ common neighbors in $G'$.

\begin{lemma}[\protect{\cite[proof of Theorem 8.7]{PilSan}}]
\label{lemma:inflation}
Inflation of a ``doubly external'' $2$-crossing of the form $C=\{\{n,b\}, \{1,n-1\}\}$ in a $2$-triangulation $T$ is an example of vertex $4$-split at $n$, with new vertices $n$ and $n+1$. The inflated star has four consecutive vertices $n-1,n,n+1$ and $1$.
\end{lemma}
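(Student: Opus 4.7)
The plan is to verify the three defining properties of a vertex $4$-split at $n$: that $\{n,n+1\}$ is an edge of $T'$, that every neighbor of $n$ in $T$ becomes a neighbor of $n$ or $n+1$ (or both) in $T'$, and that exactly three of them become common neighbors. The first property is immediate because $\{n,n+1\}$ has length $1$ in the $(n+1)$-gon, hence belongs to every $2$-triangulation of it. The claim about the vertex set of $S$ is also immediate from the formula of Theorem~\ref{thm:flattening} specialised to $c=n-1$: we get $S=\{\{n,1\},\{1,n-1\},\{n-1,n+1\},\{n+1,b\},\{b,n\}\}$, whose vertex set $\{1,b,n-1,n,n+1\}$ contains the four consecutive vertices $n-1,n,n+1,1$.

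For the neighbor analysis, I would split edges incident to $n$ or $n+1$ into boundary edges (length $\le 2$) and relevant edges (length $\ge 3$). The boundary neighbors of $n$ in the $n$-gon are $\{n-2,n-1,1,2\}$; in the $(n+1)$-gon they become $\{n-2,n-1,n+1,1\}$ for $n$ and $\{n-1,n,1,2\}$ for $n+1$. Hence the boundary contributions to the common neighbors of $n$ and $n+1$ (other than themselves) are exactly $n-1$ and $1$. For the relevant edges I would use the bijection $\phi$ of Theorem~\ref{thm:flattening}: writing $V_n$ for the set of relevant neighbors of $n$ in $T$ (which contains $b$), each $\{v,n\}\in T$ with $v\in V_n\setminus\{b\}$ is sent by $\phi^{-1}$ to $\{v,n\}\in T'$ if $v>b$ and to $\{v,n+1\}\in T'$ if $v<b$. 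The remaining candidates $\{v,n\}$ for $v\in\{3,\dots,b-1\}$ and $\{v,n+1\}$ for $v\in\{b+1,\dots,n-2\}$ are excluded from $\link_2(S)^0$: each forms a $3$-crossing with the pair $\{\{1,n-1\},\{n+1,b\}\}$ (respectively $\{\{1,n-1\},\{b,n\}\}$) of $S$, and these pairs cross because $1<b<n-1<n+1$.

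Combining both analyses, every neighbor of $n$ in $T$ appears as a neighbor of $n$ or $n+1$ (or both) in $T'$ with no extraneous neighbors introduced, and the common neighbors of $n$ and $n+1$ are exactly $\{n-1,1,b\}$---three in total, as required for a vertex $4$-split. The expected main obstacle is the $3$-crossing verification for the excluded edges; this reduces to routine interleaving checks on the five edges of $S$, exploiting the fact that all non-adjacent pairs in $S$ cross one another.
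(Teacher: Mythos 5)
Your proposal follows essentially the same route as the paper: specialise the bijection of Theorem~\ref{thm:flattening} to $c=n-1$, conclude that every neighbour of $n$ in $T$ becomes a neighbour of $n$ or $n+1$ in $T'$, identify $n-1$, $1$, $b$ as common neighbours, and use a $3$-crossing argument to rule out any others. The explicit split of the analysis into boundary and relevant edges, and the contrapositive form of the exclusion (``these edges are not in $\link_2(S)^0$'' rather than ``an extra common neighbour would create a $3$-crossing'') are minor stylistic differences.

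There is, however, a small range error in the exclusion step. You exclude $\{v,n\}$ for $v\in\{3,\dots,b-1\}$, which corresponds to edges relevant in the $n$-gon, but you must also rule out $v=2$ (when $b>2$): the edge $\{2,n\}$ is a boundary edge of the $n$-gon, yet relevant in the $(n+1)$-gon, and $\{2,n+1\}$ is automatically present as a boundary edge, so if $\{2,n\}\in T'$ then $2$ would be a fourth common neighbour of $n$ and $n+1$. This is not captured by the bijection argument applied to $V_n$, since $\{2,n\}$ is not the $\phi^{-1}$-image of any relevant edge of $T$. The fix is immediate: the very same $3$-crossing you already invoke, namely $\{2,n\}$, $\{1,n-1\}$, $\{b,n+1\}$, shows that $\{2,n\}\notin\link_2(S)^0$. (Note the analogous case $v=n-2$ \emph{is} covered by your second range.) The paper avoids this subtlety by arguing directly about an arbitrary additional common neighbour $b'$ rather than enumerating ranges.
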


\begin{proof}
Let $T'$ be the inflated $2$-triangulation.
Plugging $c=n-1$ in Theorem \ref{thm:flattening} gives
\[
\begin{array}{crcl}
\phi:& \link_2(S)^0 &\to &\link_2(C)^0 \\
& \{i,j\} & \mapsto & \{i,j\}  \quad \text{if $n+1\not \in \{i,j\}$ }  \\
& \{i,n+1\} & \mapsto &\{i,n\}   \quad \text{if $ 1 \le i < b$ } \\
& \{n,n+1\} & \mapsto &\{1,n\},
\end{array}
\]
which implies that the relevant edges of $T$ are indeed obtained from those of $T'$ by identifying the vertices $n$ and $n+1$. The same happens for the irrelevant and boundary edges (which are independent of $T$ and $T'$).
It also implies that all the neighbors of $n$ in $T$ are neighbors of at least one of $n$ and $n+1$ in $T'$.

Hence, we only need to check that $n$ and $n+1$ have exactly three common neighbors in $T'$. This holds since $n-1$, $1$ and $b$ are common neighbors of $n$ and $n+1$ in $T'$, and any additional common neighbour $b'$ would create a $3$-crossing with $\{n-1,1\}$ and either $\{n,b\}$ if $b'>b$ or $\{n+1,b\}$ if $b'<b$.
\end{proof}

That vertex $d$-splits preserve independence in both $\RR_d(n)$ and $\CC_d(n)$ is a classical result~\cite[pp.~68 and Remark 11.3.16]{Whiteley}. Preserving independence holds also in $\HH_d(n)$ and $\PP_d(n)$:

\begin{proposition}[\protect{\cite[Proposition 4.10]{CreSan:moment}}] \label{prop:ext}
\label{prop:split}
Corank does not increase under vertex $d$-split neither in $\HH_d$ nor in $\PP_d$. Hence,
vertex $d$-splits of independent graphs are independent, both in $\HH_d$ and $\PP_d$.
\end{proposition}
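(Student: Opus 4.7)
Since a vertex $d$-split adds exactly $d$ edges to $G$ (the new edge $u_1u_2$ together with one duplicated incidence for each of the $d-1$ common neighbors of $u_1$ and $u_2$), the claim that corank does not increase is equivalent to $\operatorname{rank}(G')\geq\operatorname{rank}(G)+d$ in the matroid. Because generic corank equals the minimum corank over all positions, or equivalently generic rank equals the maximum rank, it suffices to exhibit a single position $p'$ for $V(G')$ at which the rank of the corresponding submatrix is at least the generic rank of $G$ plus $d$.

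I will construct such a position by a one-parameter degeneration. Denote the set of common neighbors of $u_1,u_2$ in $G'$ by $A$, so $|A|=d-1$. Fix a position $p\colon V(G)\to\R^d$ realizing the generic rank of $G$ in $\HH_d$ (respectively, parameters $t_v$ realizing it in $\PP_d$), and set $p'_\varepsilon(v)=p(v)$ for $v\notin\{u_1,u_2\}$, $p'_\varepsilon(u_1)=p(u)$, and $p'_\varepsilon(u_2)=p(u)+\varepsilon w$ for a generic direction $w$ (for $\PP_d$ take $t'_{u_2}=t_u+\varepsilon$). At $\varepsilon=0$ the matrix $H(p'_0)|_{E(G')}$ contains a copy of $H(p)|_{E(G)}$ together with a $d$-dimensional space $U$ of ``new'' self-stresses, spanned by $\mathbf{1}_{\{u_1,u_2\}}$ (the row $u_1u_2$ vanishes identically) and, for each $a\in A$, by $\mathbf{1}_{\{u_1,a\}}-\mathbf{1}_{\{u_2,a\}}$ (the rows $u_1a$ and $u_2a$ coincide). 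It remains to show that for generic $w$ and small $\varepsilon\neq 0$ no nonzero stress in $U$ can be deformed to a self-stress of $H(p'_\varepsilon)|_{E(G')}$.

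This last step is the main obstacle. Taylor-expanding the stress equations around $\varepsilon=0$, the first-order obstruction to deforming a stress $\xi\in U$ takes the form $M(w)\,\xi=0$, where $M(w)$ is a $d\times d$ matrix with entries linear in $w$ for $\HH_d$ (respectively polynomial in $\varepsilon$ with a known leading term for $\PP_d$), built from $w$ and the vectors $p(a)$ for $a\in A$. If $\det M(w)\not\equiv 0$ then for generic $w$ only the trivial stress in $U$ can be deformed, so the corank drops by exactly $d$ upon perturbation and the rank of $H(p'_\varepsilon)|_{E(G')}$ reaches $\operatorname{rank}(G)+d$, as desired. For $\PP_d$, the non-vanishing of $\det M$ reduces to a Vandermonde-type identity: the $d$ redundant rows expand to $d$ linearly independent polynomial derivatives in $\varepsilon$, with pairwise distinct leading monomials. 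For $\HH_d$, $\det M(w)$ is a nonzero polynomial in $w$ and the coordinates of $p$; its non-vanishing can be verified by specialising to points on the moment curve and invoking the $\PP_d$ computation via Theorem~\ref{thm:3matroids}, since $\PP_d(n)$ is a specialisation of $\HH_d(n)$. The independence corollary then follows: if $G$ is independent, corank$(G)=0$, whence corank$(G')=0$ and $G'$ is independent as well.
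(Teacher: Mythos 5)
The central step of your argument breaks down: at the degenerate position $p'_0(u_1)=p'_0(u_2)=p(u)$, neither of your two claims about ``new self-stresses'' holds. First, in the hyperconnectivity matrix $H(p')$ the row of $u_1u_2$ at coincidence is $(\,\dots,\,p(u),\,\dots,\,-p(u),\,\dots)$ (entries in blocks $u_1$ and $u_2$), which is \emph{not} zero; the row only vanishes in the bar-and-joint matrix $R(p')$, not in $\HH_d$ or $\PP_d$, which are the matroids the proposition is actually about. Second, even in bar-and-joint the rows $u_1a$ and $u_2a$ do \emph{not} coincide at $\varepsilon=0$: row $u_1a$ has its nonzero entries in the column blocks of $u_1$ and $a$, while row $u_2a$ has them in the blocks of $u_2$ and $a$, and these are distinct blocks since $u_1\neq u_2$ as vertices of $G'$. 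Their difference $\operatorname{row}_{u_1a}-\operatorname{row}_{u_2a}$ equals $(p(u)-p(a))\bigl(\mathbf{1}_{u_1}-\mathbf{1}_{u_2}\bigr)\neq 0$, so $\mathbf{1}_{\{u_1,a\}}-\mathbf{1}_{\{u_2,a\}}$ is not a linear dependence. Consequently the $d$-dimensional space $U$ you claim to exhibit is not a space of stresses, and everything built on it (the first-order obstruction $M(w)$, the Vandermonde calculation, and the reduction from $\HH_d$ to $\PP_d$) has no foundation.

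The high-level strategy---perturbing from a special position with $u_1,u_2$ close together---is the classical route for \emph{bar-and-joint} vertex splitting, but even there the correct analysis requires a change of basis on the column blocks (e.g.\ passing to ``average'' and ``difference'' coordinates for $u_1,u_2$) before any block structure emerges, and the extra rows show up in the difference block rather than as coinciding or vanishing rows. For $\HH_d$ and $\PP_d$ the situation is further complicated because the matrix is not translation-invariant and the $u_1u_2$ row survives coincidence, so the degeneration needs to be set up differently or replaced with a different argument altogether (the cited \cite{CreSan:moment} does not go through coincident points in the way you describe). As written, the proof does not establish the claim.
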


This has the following consequence. We only state and prove it for $\HH_d(\q)$ (which includes the case of $\PP_d(\bt)$
when points are chosen along the moment curve) but the same statement, with the same proof, holds also for $\RR_d(\q)$ and $\CC_d(\q)$.

\begin{corollary}
\label{coro:split}
Let  $\q=(q_1,q_2 ,\dots ,q_{n})$ be a configuration in $\R^d$ (resp. in the moment curve) and assume that a certain graph $G$ is a circuit in  $\HH_d(\q)$. Let $G'$ be a vertex $d$-split of $G$ and consider it embedded in positions $\q'$ that are generic (resp. generic along the moment curve) and sufficiently close to $\q$. 

Then, $G'$ is either independent in  $\HH_d(\q')$ or it contains a unique circuit. If the latter happens, then the signs of the non-splitting edges are preserved.
\end{corollary}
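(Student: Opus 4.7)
The plan is to deduce both conclusions from Proposition~\ref{prop:split} combined with a continuity argument that relates the dependences of $G$ and of $G'$ through the degenerate configuration in which the two new vertices $u_1,u_2$ coincide with the split vertex $u$.

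First, since $G$ is a circuit in $\HH_d(\q)$ its corank there equals $1$, and by upper semi-continuity of matroid rank the generic corank of $G$ is at most $1$. Proposition~\ref{prop:split} gives the same bound for the generic corank of $G'$, which is attained by $\HH_d(\q')$ for $\q'$ generic. So $G'$ is either independent in $\HH_d(\q')$ or has a one-dimensional space of dependences, whose support is the unique circuit in $G'$.

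Assume we are in the circuit case and let $\lambda'$ be this dependence (unique up to scalar). Let $M$ be the matrix $H(\q')|_{G'}$ evaluated at the degenerate position $q'_{u_1}=q'_{u_2}=q_u$, and introduce the projection $\pi:\mu\mapsto\nu$ from row-dependences of $M$ to row-dependences of $H(\q)|_G$ given by
\[
\nu_e=\mu_e\quad(e\in A),\qquad \nu_{\{u,v\}}=\mu_{\{u_1,v\}}+\mu_{\{u_2,v\}},
\]
where $A$ denotes the non-splitting edges and a missing edge contributes~$0$. A block-by-block check on the hyperconnectivity matrix shows $\pi(\mu)$ is indeed a dependence of $H(\q)|_G$: the constraints on the column block of each $v\ne u_1,u_2$ translate identically, while the constraints on the $u_1$- and $u_2$-blocks add up to give the $u$-block constraint. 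Since $G$ is a circuit, $\pi(\mu)=c\lambda$ for some scalar $c=c(\mu)$.

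As $\q'\to\q$ the matrix $H(\q')|_{G'}$ tends to $M$ and the one-dimensional nullspace of its transpose varies continuously, so a unit-normalized $\lambda'$ has (along a subsequence) a limit $\bar\lambda$ which is a dependence of $M$. Its projection $\pi(\bar\lambda)=c\lambda$ coincides with the limit of $\lambda'|_A$, so if $c\ne 0$ the signs of $\lambda'_e$ and $\lambda_e$ agree on $A$ for every $\q'$ sufficiently close to $\q$. The main obstacle is precisely the verification that $c\ne 0$: the kernel of $\pi$ consists of the ``local'' dependences supported on $e_0=\{u_1,u_2\}$ and on the doubled edges to the $d-1$ common neighbors of $u_1,u_2$, and is trivial exactly when $q_u$ together with $\{q_v\}_{v\in C}$ are linearly independent in $\R^d$. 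This generic condition on $\q$ secures $c\ne 0$; the remaining degenerate cases reduce to it by perturbing $\q$ slightly into a generic configuration and invoking continuity once more.
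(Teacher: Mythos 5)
Your argument follows essentially the same route as the paper: reduce the corank statement to Proposition~\ref{prop:split}, then track the unique normalized dependence continuously as $\q'\to\q$ and compare its limit (a dependence of the degenerate matrix $M$ with $u_1,u_2$ collided) to the original circuit $\lambda$ via the coefficient-adding projection $\pi$. Where you go beyond the paper's presentation is in isolating the real crux: that the limit of $\lambda'$ cannot be a ``local'' dependence supported on the splitting edge and the $d-1$ doubled edges, i.e.\ that the constant $c$ with $\pi(\bar\lambda)=c\lambda$ is nonzero. The paper simply asserts ``the dependence must degenerate to the original (unique) dependence of $G$,'' which is exactly the claim $c\neq 0$ in disguise; you correctly identify that this requires the $d$ vectors $p_u,\{p_v\}_{v\in C}$ to be linearly independent in $\R^d$.

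One caveat about your closing sentence. The hypothesis is that $G$ is a circuit in $\HH_d(\q)$ for the \emph{given} $\q$, which is not assumed generic, and ``perturbing $\q$ slightly into a generic configuration and invoking continuity once more'' is delicate: a perturbation of $\q$ may break the circuit hypothesis on $G$, so the reduction is not automatic. In fairness, the paper's own proof does not treat this either -- it is an implicit genericity assumption in both arguments. In the paper's intended setting (points along the moment curve, hence the curve $(1,t,\ldots,t^{d-1})$ in $\R^d$) the $d$ relevant vectors are a Vandermonde system and are automatically independent, so the condition $c\neq 0$ holds unconditionally there and the corollary is used only in that regime; it would be cleaner to state this rather than gesture at a further perturbation.

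Two small precision notes: the paper also explicitly argues (from $G\setminus e$ independent and Proposition~\ref{prop:split}) that every non-splitting edge lies in the unique circuit of $G'$, which you obtain only implicitly once $c\neq 0$; and in your parenthetical description of the kernel you write $q_v$ where the hyperconnectivity vectors should be the $p_v\in\R^d$ -- a notational slip, not a mathematical one.
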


\begin{proof}
First perturb $\q$ to be generic, which either makes it independent or maintains it being a circuit. Proposition~\ref{prop:split} implies that $G'$ is independent  in the first case and that it is either independent or it contains a unique circuit in the second case. 

So, we only need to show that if the latter happens then all the non-spliting edges are part of the circuit and that they preserve their signs. That they are part of the circuit follows again from the proposition: if $e\in G'$ is not a spliting edge then it comes from an edge of $G$. Now, since $G$ was a circuit, $G\setminus e$ was independent; hence $G'\setminus e$ is also independent, so $e$ belongs to the circuit.

To see that the signs are preserved, consider moving the points continuously from $\q'$ back to $\q$. (At the end, the two vertices created in the split collide into the same position but we can still consider them two different vertices of the graph $G'$, with a degenerate embedding). 
Since the positions of $\q'$ are taken sufficiently close to those of $\q$, this continuous motion can be made through positions at which $G'$ always has a unique dependence, and such that the signs of the dependence do not change except perhaps at the end of the path, when we get to $\q$. At the end of the path the dependence must degenerate to the original (unique) dependence of $G$, in the sense that the coefficients of non-splitting edges are the same in $G$ and $G'$, and the coefficients of the splitting edges in $G'$ add up to those in $G$.
Now, since the signs of the non-splitting edges are never zero along the path and still non-zero at the end, by continuity they must be preserved.
\end{proof}

 \emph{Dependence}, however, is not preserved. See example after \cite[Proposition 4.10]{CreSan:moment}.

\begin{definition}[Ears]
	A star in a 2-triangulation is \textit{doubly external} if it has four consecutive vertices, like the ones that can be obtained in Lemma \ref{lemma:inflation}.
	
	An \textit{ear} of the 2-triangulation is an edge of length 3.
\end{definition}

For every ear $\{a,a+3\}$ in a $2$-triangulation $T$ there is a unique star in $T$ using the vertices $a$, $a+1$, $a+2$ and $a+3$ (hence, a doubly external star), and it has $\{a,a+2\}$, $\{a,a+3\}$ and  $\{a+1,a+3\}$ as  three consecutive sides. We call this star the star \textit{bounded by the edge $\{a,a+3\}$}.

\begin{theorem}[{\cite[Corollary 6.2]{PilSan}}]
	\label{thm:ears}
	The number of ears in a 2-triangulation is exactly 4 more than the number of internal stars.
\end{theorem}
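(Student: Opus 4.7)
The plan is to combine a boundary-edge/star double count with a bijective identification of ears and certain substructures of stars. Encode each $2$-star $S \in T$ by its gap sequence $(g_1,\dots,g_5)$, where $g_i = v_{i+1}-v_i$ (cyclically) for $v_1<\dots<v_5$ the vertices of $S$ on the $n$-gon; then the edge $\{v_i,v_{i+2}\}$ of $S$ has length $\min(g_i+g_{i+1},\,n-g_i-g_{i+1})$. In particular, this length equals $2$ (boundary edge) iff $g_i=g_{i+1}=1$, and equals $3$ (an ear) iff $g_i+g_{i+1}\in\{3,n-3\}$. Write $b(S)$ for the number of cyclic indices $i$ with $g_i=g_{i+1}=1$; equivalently, $b(S)$ counts the boundary edges of $S$. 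An elementary check shows that for $n\ge 7$ we have $b(S)\in\{0,1,2,3\}$, and these four values correspond respectively to $S$ being internal, containing exactly three, exactly four, or exactly five consecutive vertices of the $n$-gon.

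For the double count, Proposition~\ref{prop:stars}(3) yields $\sum_{S\in T} b(S) = n$ (since each boundary edge lies in a single star), while Proposition~\ref{prop:stars}(1) gives $\sum_{S\in T} 1 = n-4$. Writing $D_i$ for the number of stars in $T$ with $b(S)=i$, subtracting these identities produces
\[
D_2 + 2D_3 \;=\; D_0 + 4.
\]

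It remains to establish a bijection between the ears of $T$ and pairs $(S,\beta)$ consisting of a star $S\in T$ together with a block $\beta=\{a,a+1,a+2,a+3\}\subset S$ of four consecutive $n$-gon vertices. From $(S,\beta)$, the star edge $\{v_1,v_4\}=\{a,a+3\}$ has length $3$, hence is an ear of $T$. Conversely, given an ear $e=\{a,a+3\}$: by Proposition~\ref{prop:stars}(3) it lies in exactly two stars, and in one of them the star-middle vertex between $a$ and $a+3$ lies in $\{a+1,a+2\}$; this star contains only one of $a+1,a+2$ (a fifth short-arc vertex would force a $6$th star vertex) and hence does not contain $\beta$. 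The other star is forced to contain $\{a,a+1,a+2,a+3\}$. The crux is uniqueness of this second star: if two distinct such stars $S_1\ne S_2$ existed, with fifth vertices $c_1\ne c_2$ in the long arc and WLOG $c_1<c_2$ cyclically from $a+3$, then the three edges $\{a+1,c_1\}\in S_1$, $\{a+2,c_2\}\in S_2$, and $\{a,a+3\}$ pairwise cross, contradicting the $3$-crossing-freeness of $T$.

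Finally, count the pairs $(S,\beta)$ by star type: stars with $b(S)\le 1$ contain no block of four consecutive vertices, stars with $b(S)=2$ contain exactly one, and stars with $b(S)=3$ contain two overlapping blocks. Hence the number of ears equals $D_2+2D_3=D_0+4$, i.e.\ four more than the number of internal stars. The main anticipated obstacle is the uniqueness step in the bijection, where one must verify that the three edges explicitly exhibited indeed pairwise cross; everything else reduces to routine bookkeeping with the gap sequences and the basic incidence facts from Proposition~\ref{prop:stars}.
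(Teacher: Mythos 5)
Your overall strategy is sound, and most of it is correct: the double count $\sum_S b(S)=n$ against $\sum_S 1 = n-4$ giving $D_2+2D_3 = D_0+4$; the identification of $b(S)\in\{0,1,2,3\}$ with the block structure; the count of $4$-blocks per star type; the forward map $(S,\beta)\mapsto e_\beta$ landing on ears; and the uniqueness argument via the $3$-crossing $\{a+1,c_1\},\{a+2,c_2\},\{a,a+3\}$ are all correct.

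However, there is a genuine gap at the \emph{existence} half of the bijection. You write that of the two stars containing the ear $e=\{a,a+3\}$, ``in one of them the star-middle vertex between $a$ and $a+3$ lies in $\{a+1,a+2\}$'' and that ``the other star is forced to contain $\{a,a+1,a+2,a+3\}$.'' Neither of these two assertions is justified by anything you wrote. Your $3$-crossing argument rules out \emph{two} stars of the latter type, i.e.\ it shows \emph{at most one} of the two stars through $e$ contains $\beta_e$. It does not show \emph{at least one} does. A priori both stars through $e$ could have their middle vertex in the short arc $\{a+1,a+2\}$ (one using $a+1$, the other using $a+2$, or both using the same vertex), in which case $e$ would have no preimage, the map would fail to be surjective, and your count would only give $\#\text{ears} \ge D_2+2D_3$, not equality. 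You even flag uniqueness as ``the crux,'' but in fact it is existence that is the nontrivial half here.

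The gap is fixable with a short argument using the fact (stated right after Definition~\ref{defi:stars}) that a $k$-star together with any one of its bisectors contains a $(k+1)$-crossing. Let $W$ be the unique star containing the boundary edge $\{a,a+2\}$ (Proposition~\ref{prop:stars}(3)). Then $W$ contains $a,a+1,a+2$ as consecutive vertices, and its two star edges at $a$ are $\{a,a+2\}$ and $\{a,v_4\}$, where $v_4>a+2$ is the fourth vertex of $W$ in cyclic order. If $v_4>a+3$, then $a+3$ lies strictly inside the angle of $W$ at $a$, so the edge $\{a,a+3\}\in T$ is a bisector of $W$; but then $W\cup\{\{a,a+3\}\}\subset T$ would contain a $3$-crossing, a contradiction. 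Hence $v_4=a+3$, so $W\supset\beta_e$. This supplies the missing surjectivity (and in fact gives the existence and uniqueness in one stroke once combined with your $3$-crossing observation).

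One further small remark: your characterization of the $b(S)$ values should also dispose of the case $|\{i:g_i=1\}|=3$ with only one cyclic adjacency (which yields $b(S)=1$ with two disjoint runs of consecutive vertices). This doesn't affect the block count, but the phrase ``these four values correspond respectively to $S$ containing exactly three \dots consecutive vertices'' is slightly imprecise; what matters and what you correctly use is that a $4$-block exists iff $b(S)\ge 2$.
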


\begin{proof}[Proof of Theorem~\ref{thm:rigid}]
	This is proved by induction in $n$. For $n=5$, the only 2-triangulation is $K_5$, that is a basis in 4 dimensions.

	Suppose the statement is true for $n$ and take a 2-triangulation $T'$ on $n+1$ vertices. By Theorem \ref{thm:ears}, $T'$ has at least four ears. Without loss of generality, suppose one of them is $\{n-1,1\}$. Then it bounds a doubly external star with the vertices $n-1,n,n+1$ and 1. Let $b$ be the remaining vertex in the star. Then, $T'$ can be obtained from a 2-triangulation $T$ in $n$ vertices inflating the 2-crossing $\{\{n,b\},\{1,n-1\}\}$. By Lemma \ref{lemma:inflation}, this operation is a vertex $4$-split, that preserves independence by Proposition \ref{prop:split}.
\end{proof}

\subsection{Realizing individual elementary cycles}
\label{subsec:cycles}

We now prove some results for realizability as a fan in the case $k=2$. Among other things, we show that for $n\le 7$ any position of the points in the plane will realize the multiassociahedron $\OvAss2{n}$ as a fan via the cofactor rigidity matrix.

\begin{corollary}
\label{coro:2k+2}
For $n=2k+2$, any choice of $q_1,\dots, q_{2k+2}\in\R^2$ in convex position makes the rows of the cofactor matrix realize $\OvAss{k}{n}$ as a polytopal fan.
\end{corollary}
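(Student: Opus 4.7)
The plan is to identify the combinatorial structure of $\OvAss{k}{2k+2}$, reduce the corollary to two separate claims, and then outline the strategy for each. The main obstacle will lie in claim (a) below.

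Since $n = 2k+2$, the introduction tells us that $\OvAss{k}{2k+2}$ is combinatorially the boundary of a $k$-simplex: it has $k+1$ vertices, namely the diameters $\{i,i+k+1\}$ for $i=1,\dots,k+1$, and its $k+1$ facets are obtained by removing one diameter at a time. After contracting the rows of $C_{2k}(\q)$ corresponding to the $kn$ irrelevant and boundary edges, the vector configuration reduces to $k+1$ vectors in $\R^k$, and realizing the fan amounts to requiring that these $k+1$ vectors positively span $\R^k$. Once this holds, polytopality is automatic: any complete simplicial fan in $\R^k$ with exactly $k+1$ rays is the normal fan of a $k$-simplex, so the fan is realized by an explicit simplex. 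Thus it suffices to prove two claims: \textbf{(a)} for every $\q$ in convex position, each $k$-triangulation $T_j = K_{2k+2}\setminus\{d_j\}$ is a basis of $\CC_{2k}(\q)$; and \textbf{(b)} the unique (up to scaling) linear dependence among the $k+1$ contracted diameter rows has all coefficients nonzero and of the same sign.

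For claim (a), the plan is an induction on $k$ using the Coning Theorem (Proposition~\ref{prop:coning-cofactor}). Choose any vertex $v$ of the $(2k+2)$-gon not incident to the removed diameter $d_j$; contracting at the rows of the $2k+1$ edges through $v$ reduces the independence of $T_j$ in $\CC_{2k}(\q)$ to the independence of $K_{2k+1}\setminus\{d_j\}$ in $\CC_{2k-1}(\q')$, where $\q'$ is obtained from $\q$ by deleting $v$ and is still in convex position. Iterating brings us to the base case $k=1$, which asks that $K_4$ minus one diagonal be minimally rigid in two-dimensional bar-and-joint rigidity for any four points in convex position. This is the classical Laman-type fact that two non-degenerate triangles glued along a common edge are minimally rigid, which is automatic in convex position.

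For claim (b), the plan is to combine a symmetry computation at the regular $(2k+2)$-gon with a continuity argument. At the regular polygon, the rotation of order $k+1$ cyclically permutes the $k+1$ diameters and, by Proposition~\ref{prop:invariance}, induces an automorphism of the oriented cofactor matroid (here $d=2k$ is even, so the sign bookkeeping at wrap-around edges closes up consistently around the full cycle). This forces the $k+1$ coefficients of the unique dependence to be equal in absolute value and of the same sign. To extend to arbitrary $\q$ in convex position, I would use that the space of $(2k+2)$-point configurations in convex position in the plane is path-connected. Along any continuous path the coefficients of the dependence vary continuously (under a natural normalization), and by claim (a) they never vanish; hence their signs remain constant along the path, and the conclusion for the regular polygon propagates to all convex configurations.

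The main obstacle is claim (a): we must establish independence on the whole closed stratum of convex-position configurations rather than merely on a generic open subset. The inductive step is clean thanks to the Coning Theorem, but one must verify that the oriented form of that theorem applies at every step (which it does, since convex position in $\R^2$ lifts to convex position in $\R^3$ via $(x,y)\mapsto(x,y,1)$) and that the deleted vertex can always be chosen so that the surviving configuration remains in convex position and the removed diameter survives into the next stage of the induction.
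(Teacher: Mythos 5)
Your high-level structure matches the paper's: reduce to showing that in the unique dependence of $K_{2k+2}$ in $\CC_{2k}(\q)$, all $k+1$ diameters carry nonzero coefficients of the same sign, and note that a totally cyclic configuration of $k+1$ vectors in general position in $\R^k$ is automatically the normal fan of a $k$-simplex. Both observations are correct. However, there are two points where the paper is simpler and one point where your argument has a genuine gap.

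For your claim (a), the coning induction works, but it is a long route to a fact the paper uses in one line: Section~\ref{sec:rigidity} records that for any choice of points in general position (which convex position implies), every copy of $K_{d+2}$ is a circuit in the $d$-dimensional cofactor matroid. Thus $K_{2k+2}$ is a circuit in $\CC_{2k}(\q)$, and removing any single edge yields an independent set of full rank, i.e.\ a basis. There is no need to descend to $k=1$.

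For your claim (b), the rotation-symmetry argument has a gap. Let $\rho$ be the rotation $i\mapsto i+1$ of the regular $(2k+2)$-gon. It does cyclically permute the $k+1$ diameters, and since $d=2k$ is even the relabelling preserves the oriented matroid with no wrap-around sign correction; so far so good. But the conclusion you draw — ``this forces the $k+1$ coefficients \dots to be of the same sign'' — does not follow. Uniqueness of the circuit only tells you that $\rho$ acts on the dependence vector $\lambda$ by a scalar $\mu$, and the constraints you get are $\mu^{2k+2}=1$ (from any non-diameter orbit) and $\mu^{k+1}=1$ (from the orbit of diameters). When $k$ is even, $k+1$ is odd and $\mu=\pm1$ together with $\mu^{k+1}=1$ forces $\mu=1$. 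But when $k$ is odd, $\mu=-1$ is consistent with both constraints, and in that case the diameter coefficients would alternate in sign rather than agree. Your proof does not rule this out. One can patch it by also invoking a reflection symmetry (a reflection fixing the midpoint of a side sends that side's edge to itself, forcing its scalar to be $+1$, and composing with a rotation then forces $\mu=1$), or — much more directly — by invoking Lemma~\ref{lemma:chsign} as the paper does: in the circuit $K_{2k+2}$ every vertex has degree $2k+1$, so the coefficients of the edges at each vertex must alternate in sign as you go cyclically, which forces the sign of every edge to depend only on the parity of its length; in particular all $k+1$ diameters (length $k+1$) have the same sign. This also eliminates the need for the path-connectedness and continuity argument entirely, since Lemma~\ref{lemma:chsign} applies to arbitrary convex positions pointwise.
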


\begin{proof}
In this case, all the $k$-triangulations are $K_{2k+2}$ minus a diameter. The simplicial complex is in this case the boundary of a $(k-1)$-simplex. We also know that $K_{2k+2}$ is a circuit, therefore all the $k$-triangulations are bases.

In this circuit, Lemma~\ref{lemma:chsign} implies that the sign of each edge coincides with the parity of its length. The flipped edges in this case are two of the diameters, that have all the same sign. Hence, the condition ICoP in Corollary \ref{coro:fan} is true. The condition on the elementary cycles is trivial because all of them have length three.

This implies that every position of the points realizes the boundary of the simplex as a complete fan. But realizing a simplex as a complete fan is equivalent to realizing it as a polytope, so the corollary is proved.
\end{proof}

\begin{corollary}
\label{coro:small-n}
For $k=2$ and $n=7$, any choice of $q_1,\dots, q_7\in\R^2$ in convex position realizes $\OvAss{2}{7}$ as a fan.
\end{corollary}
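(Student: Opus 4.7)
The plan is to derive the corollary directly from Theorem \ref{thm:star}, which already does all the heavy lifting for the case $n=2k+3$. For $k=2$ we have $n=7=2k+3$, so the theorem applies and tells us that $C_{4}(\q)$ realizes $\OvAss{2}{7}$ as a complete fan if and only if, for every octahedral $2$-triangulation $T$, the big half-planes defined by the three missing edges of $T$ have a common point. Thus the whole corollary reduces to a purely combinatorial claim, independent of $\q$: \emph{there are no octahedral $2$-triangulations of the $7$-gon at all}, so that condition (2) of Theorem \ref{thm:star} holds vacuously.

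I would prove this combinatorial claim as follows. An octahedral $2$-triangulation of the $7$-gon would be $K_7$ minus three relevant (length-$3$) edges forming a matching on six of the seven vertices. By cyclic symmetry, suppose the uncovered vertex is $7$. The relevant edges avoiding vertex $7$ are precisely
\[
\{1,4\},\ \{1,5\},\ \{2,5\},\ \{2,6\},\ \{3,6\}.
\]
Among these, vertex $3$ appears only in $\{3,6\}$ and vertex $4$ appears only in $\{1,4\}$, so any matching on $\{1,\dots,6\}$ drawn from the list must contain both $\{3,6\}$ and $\{1,4\}$, forcing the third edge to be $\{2,5\}$. However, removing $\{1,4\}, \{2,5\}, \{3,6\}$ from $K_7$ leaves in place the three relevant edges $\{1,5\}, \{2,6\}, \{4,7\}$, and these pairwise cross (since $1<4<5<7$, $2<4<6<7$, and $1<2<5<6$), forming a $3$-crossing. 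Hence this candidate is not $3$-crossing-free, and by cyclic symmetry no octahedral $2$-triangulation of the $7$-gon exists.

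The combinatorial check is the only real step; no serious obstacle remains once Theorem \ref{thm:star} is invoked. Equivalently, one can verify condition (3) of Theorem \ref{thm:star} directly using Helly's theorem in $\R^2$: the intersection of the seven big half-planes is non-empty iff every triple of them has a common point, and the proof of Theorem \ref{thm:star} already reduces the possibly-problematic triples to those arising from octahedral triangulations. Since none of these exists for $(k,n)=(2,7)$, every choice of seven points in convex position realizes $\OvAss{2}{7}$ as a fan.
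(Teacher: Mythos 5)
Your proof is correct, and it takes a slightly different route through Theorem~\ref{thm:star} than the paper does. You verify condition (2) of that theorem (the octahedral half-plane condition) and observe that it is vacuously true because no octahedral $2$-triangulation of the $7$-gon exists; you then establish this combinatorial fact by a short enumeration. The paper instead verifies condition (3) directly, observing that any three pairwise-disjoint relevant edges of the $7$-gon leave one vertex uncovered, and that vertex necessarily lies in all three big half-planes, so the relevant $3$-star always has nonempty interior. Both routes are sound. Your combinatorial step is, in effect, a proof of the parenthetical remark the paper attaches to the definition of ``octahedral'' (namely that $2k+3\ge 9$, i.e.\ $k\ge 3$, is required); a slightly cleaner way to see it for general $k\le 2$ is to note that an octahedral $k$-triangulation requires three disjoint, positive, even-length paths in the $(2k+3)$-edge relevant star, which already forces $3+3\cdot 2 = 9 \le 2k+3$. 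What the paper's route buys is a one-line geometric argument with no enumeration; what your route buys is a purely combinatorial check with no geometry beyond invoking the theorem, and an explicit justification of the ``$k\ge 3$'' observation that the paper leaves implicit.
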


\begin{proof}
By Theorem \ref{thm:star}, the fact that a position realizes the fan is equivalent to the interior of the 3-star formed by the seven points being non-empty. This is trivial, because any three edges without common vertices are consecutive in the circle, and the seventh vertex will always be in the intersection.
\end{proof}

We now look at the case $k=2$ and $n\ge 8$. We are going to show that for each elementary cycle there are positions that make that cycle simple (and, in particular, for every flip there is an embedding that makes ICoP hold for that flip). Of course, this does not imply that $\OvAss{2}{n}$ can be realized as a fan; for that we would need fixed positions that work for all cycles, not one position for each cycle. But this implication would hold if $2$-triangulations were basis at arbitrary positions (Theorem \ref{thm:fan-k=2}).

We  need the fact that, in this case, neither a flip nor an elementary cycle of length 5 can use all the doubly external stars in a $2$-triangulation.

\begin{lemma} Let $T$ be a 2-triangulation on at least 8 vertices.
\begin{enumerate}
\item For any relevant edge $e$ in $T$ there is a doubly external star in $T\setminus \{e\}$.
\item If $e$ and $f$ are two relevant edges in  $T$ and the elementary cycle $\link_{\OvAss{2}{n}}(T\setminus \{e,f\})$ has length five then
there is a doubly external star in $T\setminus \{e,f\}$.
\end{enumerate}
\end{lemma}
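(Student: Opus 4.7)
The plan is to combine Theorem~\ref{thm:ears} (every $2$-triangulation has at least four ears) with Proposition~\ref{prop:stars}(3) (each relevant edge lies in exactly two stars) and the uniqueness of the bounding doubly external star for each ear, noted in the paragraph preceding the lemma. Let $D$ be the number of doubly external stars of $T$, and write $D = D_4 + D_5$ according to whether a doubly external star has exactly four or exactly five consecutive polygon vertices. A $4$-consecutive doubly external star is the bounding star of a unique ear, while a $5$-consecutive one bounds two, so $\#\operatorname{ears}(T) = D_4 + 2 D_5$. Combined with Theorem~\ref{thm:ears} this forces $D \geq 2$, with equality iff $D_5 = 2$ and $D_4 = 0$.

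For Part~(1), if $D \geq 3$ then Proposition~\ref{prop:stars}(3) already suffices: $e$ lies in at most two stars, so at least one doubly external star avoids $e$. In the borderline case $D = 2$, the two $5$-consecutive doubly external stars must have parameter offsets $a, b$ satisfying $|a-b| \geq 2$ modulo~$n$ (otherwise two of their ears would coincide, contradicting the count of at least four distinct ears). A short verification on the only relevant edges of a $5$-consecutive star, namely its two ears $\{a,a+3\}$ and $\{a+1,a+4\}$, shows such a pair shares no relevant edge, so $e$ lies in at most one of them.

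For Part~(2), I would use the proof of Corollary~\ref{coro:cycles} to translate the length-$5$ hypothesis into $|S| = 3$, where $S$ is the set of stars containing $e$ or $f$. If $D \geq 4$ the conclusion is immediate. The delicate cases are $D \in \{2, 3\}$: here I would argue by contradiction, assuming every doubly external star of $T$ lies in $S$. In each subcase -- distinguished by which star of $S$ is shared by $e$ and $f$, and by whether $e, f$ appear as ears or as ``long'' edges of the doubly external stars containing them -- the configuration becomes rigid enough that the edges already prescribed in $T$ (coming from the stars in $S$ together with the relevant edges $e, f$) contain three pairwise crossing relevant edges, contradicting the fact that $T$ is $3$-crossing-free. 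The main obstacle is precisely this structural case analysis: the pigeonhole/counting argument alone is insufficient for small $D$, and one must invoke the $3$-crossing-free property of $T$ to rule out configurations such as the one where a non-doubly-external star simultaneously contains both $e$ and $f$ as ears of two distinct $5$-consecutive stars.
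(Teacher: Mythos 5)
Your Part~(1) is correct, and the reasoning is a mild variant of the paper's. The paper splits on the number of triply external stars: if there are two, they cannot share any edge (since a boundary edge lies in only one star of $T$, and an ear bounds a unique star), so $e$ lies in at most one; if there is at most one, then $D\geq 3$ and Proposition~\ref{prop:stars}(3) finishes. Your split is instead on $D\geq 3$ versus $D=2$; both work. One small inaccuracy: your justification for ruling out $|a-b|\leq 1$ (``contradicting the count of at least four distinct ears'') happens to be correct here only because $D=2$ forces $D_4=0$, so those two triply external stars account for all ears. The cleaner and more general observation, used in the paper, is simply that two distinct stars of $T$ cannot share an ear (uniqueness of the bounding star) or a boundary edge (Proposition~\ref{prop:stars}(3)), hence share no edge at all for $n\geq 8$.

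Part~(2), however, has a genuine gap. After the correct reduction to $|S|=3$ and the observation that $D\geq 4$ settles the matter, you reduce to the assumption that every doubly external star lies in $S=\{S_0,S_e,S_f\}$ and assert --- without carrying out the case analysis --- that one can always exhibit a $3$-crossing inside $T$. This is not proved and is not what the paper does. The paper's argument is considerably more delicate: (i) it first pins down the ear count, showing that under the assumption there are at most five ears, and ruling out five via the sub-argument that it would force $S_0$ to have five edges consisting of two boundary edges and three ears, impossible for $n\geq 8$; so there are exactly four ears, all bounding $S_0$, $S_e$ or $S_f$; (ii) it then notes the remaining $n-7$ stars are singly external and flattens their boundary edges one by one, tracking via Theorem~\ref{thm:flattening} that the link of $T\setminus\{e,f\}$ keeps length five, until arriving at a $2$-triangulation of the $7$-gon; (iii) the contradiction is that the last flattening leaves a singly external $2$-crossing, so the two leftover relevant edges cross, whereas a length-five link forces them to be non-crossing (as in the third picture of Example~\ref{ex:flips}). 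The contradiction lives in $T_0$, not in $T$, and is about the length of the link, not about a $3$-crossing in $T$. Your proposed route (directly finding a $3$-crossing in $T$) would need a complete combinatorial classification of the stars in $S$ and is not obviously viable; in any case it is not substantiated in the write-up.
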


\begin{proof}
A star cannot be bounded by more than two ears, and if a star $S$ is bounded by two ears then its five vertices are consecutive. That is, the edges of $S$ are the two ears plus three boundary edges. We call such stars \emph{triply external}. 

Two triply external stars cannot have a common edge (except for $n=6$, but we are assuming $n\ge 8$).  This, together with the fact that $T$ has at least four ears (Theorem~\ref{thm:ears})  implies part (1): if $T$ has two triply external stars then (at least) one of them does not use $e$, and if $T$ has one triply external star, or none, then the existence of four ears implies that there are at least three doubly external stars in $T$, and only two of them can use $e$.

	We now look at part (2) of the statement. By the proof of Corollary~\ref{coro:cycles}, for the link of $T\setminus \{e,f\}$ to have length five we need that there is a star $S_0$ in $T$ using both $e$ and $f$, plus another two stars $S_e$ and $S_f$ using each one of $e$ and $f$. We want to show that there is a doubly external star that is not any of these three. (These three may or may not be doubly external, or external).
	
	Suppose, to seek a contradiction, that no star other than these three is doubly external. Then every ear bounds one of these three stars. Since only triply external stars are bounded by two (and then only two) ears, the total number of ears is at most three plus the number of triply external stars among $S_0$, $S_e$ and $S_f$. The three cannot be triply external (because $S_0$ shares edges with both $S_e$ and $S_f$), so the number of ears is at most five. But five ears would imply $S_e$ and $S_f$ to be triply external, in particular $e$ and $f$ to be ears bounding them, and $S_0$ to be doubly external, bounded by the fifth ear, different form $e$ and $f$. In particular, the five edges of $S_0$ would have to be two boundary edges and three ears, which can only happen with $n=7$ (because two of the ears would need to share a vertex and have their other end-points consecutive).

	So, there are at most four ears in $T$ and, by Theorem~\ref{thm:ears}, exactly four. Moreover, they all bound $S_0$, $S_e$ or $S_f$. 
	By Theorem \ref{thm:ears} again, this implies that each of the other $n-7$ stars in $T$  is an external star, but not a doubly external one. That is, each of these $n-7$ stars contains one and only one of the $n-7$ boundary edges, and the other seven boundary edges are distributed among $S_0$, $S_e$ and $S_f$. 

Let $T_0$ be the $2$-triangulation of the $7$-gon obtained by flattening one by one the $n-7$ boundary edges not in $S_0$, $S_e$ or $S_f$. Observe that, when we flatten a singly external star, all other stars have the same number of boundary edges before and after the flattening. In particular, the last star that was flattened was still singly external before the flattening, so it becomes a singly external $2$-crossing (that is, a crossing of two relevant edges) in $T_0$.
At the end, in $T_0$ only $S_0$, $S_e$ and $S_f$ survive, and the edges $e$ and $f$ are such that their link is a cycle of length five (because all throughout the process the link of $T\setminus \{e,f\}$ preserves its length, by Theorem \ref{thm:flattening}). 

The contradiction is that for the cycle to be of length five we need the two relevant edges in $T\setminus \{e,f\}$ to be non-crossing, as in the third picture of Example~\ref{ex:flips}, but those two edges must cross because they are the $2$-crossing obtained form the last star that was flattened.
\end{proof}

\begin{theorem}
\label{thm:good_cycle}
\begin{enumerate}
	\item For each pair of adjacent facets in $\OvAss{2}{n}$ there is a choice of parameters for $P_4(t_1,\dots,t_n)$ that makes the corresponding circuit of the polynomial rigidity matrix satisfy ICoP.
		\item For each elementary cycle with length 5 in $\OvAss{2}{n}$ there is a choice of parameters for $P_4(t_1,\dots,t_n)$ that makes the cycle simple.
\end{enumerate}
\end{theorem}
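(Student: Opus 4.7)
Plan: I would prove both parts simultaneously by induction on $n$, using the flattening/inflation correspondence of Theorem~\ref{thm:flattening} and Lemma~\ref{lemma:inflation}, combined with the sign preservation of Corollary~\ref{coro:split}.

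For the base case $n \le 7$, Corollaries~\ref{coro:2k+2} and~\ref{coro:small-n} guarantee that arbitrary points in convex position along the parabola realize $\OvAss{2}{n}$ as a complete simplicial fan, so every flip automatically satisfies ICoP and every elementary cycle is automatically simple. Both statements therefore hold with \emph{any} choice of parameters.

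For the inductive step, let $n \ge 8$ and consider a flip $\{T_1,T_2\}$ (respectively, an elementary cycle of length five centered at a codimension-two face $T\setminus\{e,f\}$). The preceding lemma provides a doubly external star $S$ in $T_1\cap T_2$ (resp.\ in $T\setminus\{e,f\}$) whose edges avoid $\{e,f\}$. The first refinement I would make is to arrange that the two ``middle'' consecutive vertices of $S$ are also vertex-disjoint from the (at most four) endpoints of $e,f$ (resp.\ the at most ten endpoints of the five edges of the cycle); this uses that a $2$-triangulation has at least four ears (Theorem~\ref{thm:ears}) and a pigeonhole count on the few positions each endpoint can forbid. After cyclically relabeling so that $S$'s consecutive vertices are $n-2,n-1,n,1$, flatten along $S$ via Theorem~\ref{thm:flattening} to produce a flip (resp.\ length-$5$ cycle) in $\OvAss{2}{n-1}$ involving $\phi(e),\phi(f)$. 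The inductive hypothesis yields parameters $t_1,\dots,t_{n-2},t_n$ along the parabola for which this smaller instance satisfies the required sign condition. Now introduce a new parameter $t_{n-1}$ sufficiently close to $t_n$ and generic along the parabola. By Lemma~\ref{lemma:inflation} the inflation is a vertex $4$-split at vertex $n-1$, and Corollary~\ref{coro:split} guarantees that the unique dependence in the enlarged circuit preserves signs on all non-splitting edges, i.e.\ on edges not using vertex $n-1$. By construction $S$ was chosen so that $e,f$ (and, for part (2), the three consecutive edges of the cycle witnessing simplicity via Proposition~\ref{prop:shortcycles}(2)) are non-splitting, so the required sign relation transfers from the smaller configuration to $P_4(t_1,\dots,t_n)$, completing the induction.

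The main obstacle I anticipate is the vertex-disjoint strengthening of the preceding lemma. Each endpoint of $e,f$ can kill up to two doubly external stars (the ones for which that endpoint is a middle vertex), so with only the guaranteed four ears the count is a priori tight: four endpoints might forbid the stars from all four ears at once. The fix should go one of two routes: either prove that in such pathological configurations the $2$-triangulation must carry extra ears beyond the generic minimum, or bypass the vertex-disjointness requirement altogether by invoking Lemma~\ref{lemma:chsign} at the split vertex to pin down the sign of any splitting flipped edge from the signs of its non-splitting neighbors. Handling this bookkeeping cleanly for the small values of $n$ just past the base case, and confirming that the requisite sign propagation for length-$5$ cycles really does rest on only three non-splitting edges of the cycle, is where I expect most of the technical work to lie.
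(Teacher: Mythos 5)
Your overall strategy --- induction on $n$, locating a doubly external star $S$ avoiding the flipped edges, flattening along $S$, applying the inductive hypothesis, and inflating back via a vertex $4$-split while invoking Corollary~\ref{coro:split} --- is exactly the one the paper uses. However, the additional ``vertex-disjointness'' refinement you impose, and the obstacle you identify in establishing it, rest on a misreading of ``non-splitting edge'' in Corollary~\ref{coro:split}. It does not mean ``edge not incident to a split vertex.'' When a vertex $u$ is split into $u_1,u_2$, the only edges whose signs the degeneration $u_1\to u_2$ fails to control are the new edge $\{u_1,u_2\}$ and the \emph{pairs} $\{u_1,w\},\{u_2,w\}$ for the $d-1$ common neighbors $w$ (since in the limit only the sum of each pair's coefficients is determined). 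An edge $\{u_i,w\}$ with $w$ a non-common neighbor corresponds bijectively to the edge $\{u,w\}$ of the smaller graph, so its coefficient converges to that of $\{u,w\}$ and its sign is preserved; it is a non-splitting edge even though it touches a split vertex. That is exactly what the proof of Corollary~\ref{coro:split} is computing.

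With this reading, the argument closes without any strengthening of the lemma. In the inflation of Lemma~\ref{lemma:inflation} the common neighbors of the two new vertices are $n-1$, $1$ and $b$, so the splitting edges are exactly $\{n,n+1\}$, $\{n,n-1\}$, $\{n+1,n-1\}$, $\{n,1\}$, $\{n+1,1\}$, $\{n,b\}$, $\{n+1,b\}$ --- each of which is irrelevant, a boundary edge, or an edge of $S$. Since $e,f$ (resp.\ the five edges of the elementary cycle) are relevant and, by the way $S$ was chosen, none of them lies in $S$, they are automatically non-splitting, and Corollary~\ref{coro:split} preserves their signs. The pigeonhole worry about the four-ear bound being tight, and the proposed fallback via Lemma~\ref{lemma:chsign}, are therefore unnecessary; the lemma as stated already suffices.
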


\begin{proof}
	The proof goes by induction in $n$. For $n\le 7$ it is already proved in corollaries \ref{coro:2k+2} and \ref{coro:small-n}, so suppose it is true for $n$ and prove it for $n+1\ge 8$.
	
	For the first part, let $T_1$ and $T_2$ be two $2$-triangulations we are looking at, and let $e,f$ be the edges in $T_1\setminus T_2$ and $T_2\setminus T_1$, respectively. Part (1) of the previous lemma implies that there is a doubly external star $S\subset T_1\setminus \{e\} = T_1\cap T_2$.	
	By Theorem \ref{thm:flattening}, flattening $S$ in $T_1$ and $T_2$ we get $2$-triangulations $T_1'$ and $T_2'$ that still differ by a flip, and by the inductive hypothesis the sign condition ICoP will hold in the circuit $T_1'\cup T_2'$ for certain choice of the parameters $t_i$. Now we return to the flip graph in $n+1$ vertices by the reverse operation of flattening a doubly external star, which is a vertex split by Lemma~\ref{lemma:inflation}.
If we keep the two split vertices close enough, the signs of the non-split edges (which include the flip edges $e$ and 
$f$) will not be altered (Corollary~\ref{coro:split}), and the ICoP condition still holds.
	
	For the second part, let our elementary cycle be (the link of) $T\setminus \{e,f\}$, for a $2$-triangulation $T$ and relevant edges $e,f\in T$. By the previous lemma, there is a doubly external star $S\subset T\setminus \{e,f\}$.
	
	Again, we can flatten $S$, assume by the inductive hypothesis that the sign condition in part (3) of Corollary~\ref{coro:fan} holds in the flattened $5$-cycle, and return to $n+1$ vertices by a vertex split that will preserve the sign condition if the split vertices are kept close enough.
\end{proof}

\begin{proof}[Proof of Theorem \ref{thm:fan-k=2}]
	Suppose, for a contradiction, that all positions realize $\OvAss{2}{n}$ as a basis collection, but there is a position $\bt$ that does not realize it as a fan. This implies that, at $\bt$, there is an elementary cycle with wrong signs. But, by Theorem \ref{thm:good_cycle}, there is another position $\bt'$ giving the right signs in that cycle.
	
	Consider now a continuous transition between $\bt$ and $\bt'$. At some point, the signs need to change, either by attaining condition ICoP at a flip, or by making the cycle simple. But any of the two ways would involve collapsing some cone to lower dimension at that point, which does not happen by hypothesis.
\end{proof}

\subsection{Experimental results}
\label{subsec:experiments}
In this section we report on some experimental results. In all of them we choose real parameters $\bt= \{t_1< t_2<\dots<t_n\}$ (actually we choose them integer, so that they are exact) and computationally check whether the configuration of rows of $P_{2k}(t_1,\dots,t_n)$ realizes $\OvAss{k}{n}$ first as a collection of bases, then as a complete fan, and finally as the normal fan of a polytope. 

For the experiments we have written python code which, with input $k$, $n$ and the parameters $\bt$, first constructs the set of all $k$-triangulations and then checks the three levels of realizability as follows:
\begin{enumerate}
\item Realizability as a collection of bases amounts to computing the rank of the submatrix $P_{2k}(\bt)|_T$ corresponding to each $k$-triangulation $T$.
\item For realizability as a fan we first check the ICoP  property, which amounts to computing the signs of certain dependences among rows  of $P_{2k}(\bt)$. There is une such dependence for each ridge in the complex, so the total number of them is $N D/2$ where $N$ is the number of $k$-triangulations on $n$ points and $D=k(n-2k-1)$ is the dimension of $\OvAss{k}{n}$.

If ICoP holds then we check that a certain vector lies in the positive span of a unique facet of the complex. We do this for the sum of rays corresponding of a particular $k$-triangulation, the so-called \emph{greedy} one. This property, once we have ICoP, is equivalent to being realized as a fan by parts (2) and (3) of Theorem~\ref{thm:winding}.%

The \emph{greedy} $k$-triangulation is the (unique) one containing all the irrelevant edges and the edges in the complete bipartite graph $[1,k]\times [k+1,n]$, and only those. It is obvious that these edges do not contain any $(k+1)$-crossing and we leave it to the reader to verify that the number of relevant ones is indeed $k(n-2k-1)$.

\item For realizability as a polytope we then need to check feasibility of the linear system of inequalities (\ref{ineq}) from Lemma \ref{lemma:inequalities}.

Here, without loss of generality we can assume that the lifting vector $f_{ij}$ is zero in all edges of a particular $k$-triangulation, and we again use the greedy one. This reduces the number of variables in the feasibility problem from $n(n-2k-1)/2$ to $(n-2k)(n-2k-1)/2$, a very significant reduction for the values of $(n,k)$ where we can computationally construct $\OvAss{k}{n}$. Apart of the computational advantage, it saves space when displaying a feasible solution; in all the tables in this section we show only the non-zero values of $f_{ij}$, which are those of relevant edges  contained in $[k+1,n]$. 
Note that taking all the $f_{ij}$'s of a particular $k$-triangu\-lation equal to zero makes the rest strictly positive.
\end{enumerate}

\begin{remark}
\label{rem:monotone}
If a choice of parameters realizes $\OvAss{k}{n}$ (at any of the three levels) for a certain pair $(k,n)$ then deleting any $j$ of the parameters the same choice realizes $\OvAss{k'}{n-j}$ for any $k'$ with $ k-j/2\le k'\le k$. This follows from Lemma~\ref{lemma:monotone} plus the fact that each of the three levels of realization is preserved by taking links.
\end{remark}

Our first experiment is taking equispaced parameters. Since an affine transformation in the space of parameters produces a linear transformation in the rows of $P_{2k}(\bt)$, we take without loss of generality  $\bt=(1,2,3,\dots,n)$. We call these the \emph{standard positions} along the parabola.

For $k\ge 3$ and $n\ge 2k+3$ we show in Theorem \ref{thm:Desargues} that standard positions do not even realize $\OvAss{k}{n}$ as a collection of bases. Hence, we only look at $k=2$.

\begin{lemma}
\label{lemma:standard}
Let $\bt=\{1,2,\dots,n\}$ be standard positions for the parameters. Then:
\begin{enumerate}
\item Standard positions for $P_{4}(\bt)$ realize $\OvAss{2}{n}$ as the normal fan of a polytope if and only if $n\le 9$.

\item The non-standard positions $\bt=(-2,1,2,3,4,5,6,7,9,20)$ for $P_{4}(\bt)$ realize $\OvAss{2}{10}$ as the normal fan of a polytope.

\item Standard positions for $P_{4}(\bt)$ realize $\OvAss{2}{n}$ as a complete fan for all $n\le 13$.
\end{enumerate}
\end{lemma}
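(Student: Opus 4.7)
The overall strategy is computational, based directly on the characterizations in Corollary~\ref{coro:fan} (fan realization) and Lemma~\ref{lemma:inequalities} (polytopality). By the monotonicity observation in Remark~\ref{rem:monotone}, all three parts reduce to checking a small number of extreme cases: for part~(1) it suffices to verify polytopality at $(k,n)=(2,9)$ and non-polytopality at $(2,10)$; for part~(2) a single verification at $(2,10)$ with the tweaked parameter vector $(-2,1,2,3,4,5,6,7,9,20)$; and for part~(3) a single verification at $(2,13)$, since fan realization is inherited by links.

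For each positive assertion I would first enumerate all $2$-triangulations of the $n$-gon using Pilaud's code, which exploits the bijection with sorting networks. Then, for each triangulation $T$, I compute $\det P_4(\bt)|_T$ to confirm the basis property; for each ridge (flip) $T_1\cap T_2$ I compute the unique linear dependence among the rows of $P_4(\bt)|_{T_1\cup T_2}$ and check that the two flipped edges have equal sign (ICoP); and for each elementary cycle of length five I verify the three-consecutive-sign condition of Proposition~\ref{prop:shortcycles}(2). This settles the fan statements of parts~(2) and~(3). For polytopality in parts~(1) and~(2), once the fan structure is confirmed, I solve the linear system of Lemma~\ref{lemma:inequalities}: using Remark~\ref{rem:regular}(2), I fix the lifts on the greedy $2$-triangulation to zero, which leaves $(n-4)(n-5)/2$ strictly positive variables (one per relevant edge in $[3,n]$) subject to one inequality per ridge, and exhibit an explicit feasible $(f_{ij})$.

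The non-polytopality half of part~(1) would be certified dually: by Farkas' lemma, infeasibility of the LP amounts to a non-negative combination of the inequalities~(\ref{ineq}) yielding the contradiction $0>0$. It is enough to produce such a combination at $n=10$ (monotonicity propagates it to all $n\ge 10$, since any valid lift for $\OvAss{2}{n}$ restricts to one for the link $\OvAss{2}{10}$ obtained by iteratively applying Lemma~\ref{lemma:monotone}). The explicit coefficients of this Farkas certificate can be read off from the output of any rational-arithmetic LP solver applied to the standard positions at $n=10$.

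The main obstacle is scale rather than conceptual: by Jonsson's Hankel-determinant formula the number of $2$-triangulations grows roughly as $C_n^2$, so at $n=13$ one is enumerating on the order of millions of facets and performing sign computations on tens of millions of ridges and $5$-cycles. Per triangulation the linear algebra is cheap since the ambient dimension $k(2n-2k-1)=44$ and the dual dimension $k(n-2k-1)-1=15$ are both small; the real cost lies in the combinatorial enumeration and in keeping the arithmetic exact (which is why I would work with integer $t_i$ throughout, so that all tests for zero, sign, and LP feasibility are decidable over $\mathbb{Q}$).
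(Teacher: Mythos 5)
Your proposal is correct and matches the paper's approach essentially verbatim: both reduce via the monotonicity of Remark~\ref{rem:monotone} to a finite list of extreme cases $(2,9)$, $(2,10)$, and $(2,13)$, then verify the basis, ICoP, length-five-cycle, and lifting-feasibility conditions of Corollary~\ref{coro:fan} and Lemma~\ref{lemma:inequalities} computationally (the paper records the feasible $(f_{ij})$'s in Tables~\ref{table:2n} and~\ref{table:210}, and mentions that the $n=13$ fan check took about a week). Your suggestion to certify the infeasibility at $n=10$ by an explicit Farkas vector is a small, sensible refinement over the paper's unadorned ``the computer said the system is not feasible,'' but otherwise the two arguments coincide.
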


\begin{proof}
For part (1), by Lemma~\ref{lemma:standard} we only need to check that $n=9$ works and $n=10$ does not. For $n=8,9$ Table~\ref{table:2n} 
shows values of $(f_{ij})_{i,j}$ that prove the fan polytopal. For $n=10$ the computer said that the system is not feasible (which finishes the proof of part (1)), but modifying the standard positions to the ones in part (2) it gave the feasible solution displayed in Table~\ref{table:210}.

For part (3), the computer checked the conditions for a complete fan for $n=8,9,10,11,12,13$. Only the last one would really be needed; this last one took about 7 days of computing in a standard laptop.
\end{proof}

\begin{table}
	\begin{tabular}{c|c}
		 $i,j$ & $f_{ij}$ \\ \hline
		3,6 & 3 \\
		3,7 & 14 \\
		3,8 & 36 \\
		4,7 & 3 \\
		4,8 & 16 \\
		5,8 & 6 \\
	\end{tabular}
\qquad\qquad\qquad
	\begin{tabular}{c|c}
		 $i,j$ & $f_{ij}$ \\ \hline
		3,6 & 7 \\
		3,7 & 29 \\
		3,8 & 76 \\
		3,9 & 165 \\
		4,7 & 9 \\
	\end{tabular}\quad
	\begin{tabular}{c|c}
		 $i,j$ & $f_{ij}$ \\ \hline
		4,8 & 33 \\
		4,9 & 95 \\
		5,8 & 6 \\
		5,9 & 42 \\
		6,9 & 16 \\ 
	\end{tabular}
	\medskip
	\label{res9}
	\caption{Height vectors $(f_{ij})_{i,j}$ realizing $\OvAss{2}{8}$ (left) and $\OvAss{2}{9}$ (right) as a polytopal fan with rays in $P_4(1,2,\dots,n)$ (standard positions).}
	\label{table:2n}
\end{table}

\begin{table}
	\begin{tabular}{c|c}
		 $i,j$ & $f_{ij}$ \\ \hline
		3,6 & 44 \\
		3,7 & 161 \\
		3,8 & 424 \\
		3,9 & 1733 \\
		3,10 & 46398 \\
	\end{tabular}\quad
	\begin{tabular}{c|c}
		 $i,j$ & $f_{ij}$ \\ \hline
		4,7 & 45 \\
		4,8 & 260 \\
		4,9 & 1722 \\
		4,10 & 60296 \\
		5,8 & 106 \\
	\end{tabular}\quad
	\begin{tabular}{c|c}
		 $i,j$ & $f_{ij}$ \\ \hline
		5,9 & 1062 \\
		5,10 & 42019 \\
		6,9 & 196 \\
		6,10 & 13048 \\
		7,10 & 6146 \\ 
	\end{tabular}
	\medskip
	\caption{A lifting vector that leads to a polytopal realization of the multiassociahedron for $k=2$ and $n=10$, with $(t_i)_{i=1,...,10}=(-2,1,2,3,4,$ $5,6,7,9,20)$.}
	\label{res10}
	\label{table:210}
\end{table}


Let us mention that for $n=11$ we have tried several positions besides the standard ones. All of them realize the complete fan but none realize it as polytopal. Among the positions we tried are the ``equispaced positions along a circle'' that we now explain.

The standard parabola is projectively equivalent to the unit circle. Since $P_{2k}(\bt)$ is linearly equivalent to the cofactor rigidity matrix $C_{2k}(\q)$ for the points $\q$ of the parabola corresponding to the parameters $\bt$, it makes sense to look at the values of $\bt$ that produce equispaced points (that is, a regular $n$-gon) when the parabola is mapped to the circle. We call those values of $\bt$, ``equispaced along the circle''. They are
\begin{equation}
t_i = \tan\left(\alpha_0+ \frac{i}{n}\pi\right), \quad i=1,\dots,n
\label{eq:circle}
\end{equation}
for any choice of $\alpha_0$, with a symmetric choice being $\alpha_0=-(n+1)\pi/2n$.

For $k=3$, $n=10$, we already know that  standard positions do not even give a basis collection. We have tried to strategies to realize $\OvAss{3}{10}$: perturbing the standard positions slightly we have been able to recover independence (that is, a basis collection), but not the fan (the ICoP condition was not satisfied). 
Using equispaced positions along the circle via Equation~\eqref{eq:circle} the positions realize the polytope.

For $k=3$, $n=11$ and for $k=4$ and $n=12,13$ equispaced positions realize the fan but not the polytope, even after trying several perturbations. 

\begin{lemma}
\label{lemma:310}
\begin{enumerate}
\item For the same positions $\bt$ of Table~\ref{table:210}, $P_6(\bt)$ realizes $\OvAss{3}{10}$ as the normal fan of a polytope.
The following are valid values of $f$:
\begin{center}
	\begin{tabular}{c|c}
		 $i,j$ & $f_{i,j}$ \\ \hline
		4,8 & 4 \\
		4,9 & 69 \\
		4,10 & 16074 \\
		5,9 & 14 \\
		5,10 & 10281 \\
		6,10 & 3948 \\
	\end{tabular}
	\qquad\qquad
	\medskip
\end{center}

\item Equispaced positions along the circle realize $\OvAss{k}{n}$ as a fan for $(n,k)\in \{(3,11),(4,12),(4,13)\}$.
\qed
\end{enumerate}
\end{lemma}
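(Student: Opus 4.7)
The proof is essentially a computational verification, following the three-step procedure outlined in Section~\ref{subsec:experiments} and using Corollary~\ref{coro:fan} and Lemma~\ref{lemma:inequalities}. My plan is the following.

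First, for each pair $(k,n) \in \{(3,10), (3,11), (4,12), (4,13)\}$, I would enumerate all $k$-triangulations of the $n$-gon using the correspondence with sorting networks (Pilaud's code, as acknowledged in the paper). Then, with the given parameters $\bt$ fixed, I would build the polynomial rigidity matrix $P_{2k}(\bt)$ and, for each $k$-triangulation $T$, compute the determinant of $P_{2k}(\bt)|_T$ to verify that every $T$ yields a basis (the first condition in Corollary~\ref{coro:fan}). Next, for each ridge $\tau = T_1 \cap T_2$ where $T_1, T_2$ are adjacent $k$-triangulations, I would compute the unique linear dependence among the rows indexed by $T_1 \cup T_2$ and check that the two flipped edges have coefficients of the same sign (the ICoP condition). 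Finally, since by Corollary~\ref{coro:cycles} all elementary cycles in $\OvAss{k}{n}$ have length at most five, and by Proposition~\ref{prop:shortcycles} cycles of length $\le 4$ are automatic, I would enumerate the elementary cycles of length exactly five and check for each the sign condition in Proposition~\ref{prop:shortcycles}(2). These three checks together, by Corollary~\ref{coro:fan}, establish that $P_{2k}(\bt)$ realizes $\OvAss{k}{n}$ as a complete simplicial fan. This settles part (2) entirely and gives the fan realization needed for part (1).

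For the polytopality part of (1), once the fan has been verified to exist, I would appeal to Lemma~\ref{lemma:inequalities} and Remark~\ref{rem:regular}(1): it suffices to check one linear inequality per ridge (flip). Using the height vector $f$ listed in the statement (extended by $f_{ij}=0$ on the greedy $3$-triangulation, as justified by Remark~\ref{rem:regular}(2)), for each flip I would compute the circuit coefficients $\omega(C)$ and verify that $\sum_{i \in C} \omega_i(C) f_i > 0$. Since all data ($\bt$ and $f$) are given as integers (or rationals), this can be done with exact arithmetic, leaving no room for numerical doubt.

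The main obstacle is purely computational rather than conceptual: the number of $k$-triangulations grows roughly like $C_n^k$ (Henkel-determinant formula of Jonsson), and the dimension $k(2n-2k-1)$ of the ambient space is already quite large for $(k,n)=(4,13)$, so the linear algebra over $\mathbb{Z}$ (or $\mathbb{Q}$) must be implemented efficiently. To avoid floating-point issues one works with exact integer arithmetic throughout, which makes the checks certificates rather than heuristics. I would expect the $(4,13)$ case to dominate the running time, by analogy with the $(2,13)$ check mentioned in the proof of Lemma~\ref{lemma:standard} which took about a week. For $(3,10)$, the polytopality LP has only $\binom{10-2k}{2} = 6$ nontrivial variables and the solution is explicitly given, so verifying feasibility is quick; the bulk of the work is the fan verification step.
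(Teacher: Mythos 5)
Your proposal is correct and matches the paper's own verification, which is likewise purely computational and follows the three-step recipe of Section~\ref{subsec:experiments}: enumerate the $k$-triangulations, check the basis and ICoP conditions, and finally verify feasibility of the LP from Lemma~\ref{lemma:inequalities} using the given lifting vector (normalized to vanish on the greedy $k$-triangulation). The only cosmetic difference is that the paper's implementation tests the fan property via the unique-cone criterion (part (3) of Theorem~\ref{thm:winding}) rather than your direct scan of length-five elementary cycles, but these are equivalent once ICoP holds.
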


\end{document}